\begin{document}

\title[Morse-Conley-Floer Homology]{}

\author[T.O. Rot and R.C.A.M. Vandervorst]{}

\maketitle
\noindent {\huge {\bf   Morse-Conley-Floer Homology}} 
\vskip.8cm

\noindent 
T.O. Rot (t.o.rot@vu.nl) and R.C.A.M. Vandervorst (vdvorst@few.vu.nl)\footnote{Thomas Rot supported by NWO grant 613.001.001.} 
\vskip.3cm

\noindent{\it Department of Mathematics, VU University Amsterdam, De Boelelaan 1081a,
1081 HV Amsterdam, the Netherlands.}

\vskip1cm
\begin{sloppypar}

\noindent {\bf Abstract.}
The gradient flow of a Morse function on a smooth closed manifold generates, under suitable transversality assumptions, the Morse-Smale-Witten complex. The associated Morse homology is an invariant for the manifold, and equals the singular homology, which yields the classical Morse relations. In this paper we define Morse-Conley-Floer homology, which is an analogous homology theory for isolated invariant sets of smooth, not necessarily gradient-like, flows. We prove invariance properties of the Morse-Conley-Floer homology, and show how it gives rise to the Morse-Conley relations.\\

\noindent {\em AMS Subject Class:} 37B30, 37C10, 58E05

\noindent {\em Keywords:} Morse homology, Lyapunov functions, Conley index theory

\textwidth=12.5cm
\vskip1cm

\section{Introduction}
\label{sec:intro}
The aim of this paper is to define an analogue of Morse homology for isolated invariant sets of smooth, not necessarily gradient-like, flows. We first recall how the gradient flow of a Morse gives rise to Morse homology. 

\subsection{Morse Homology}
On a smooth, closed, $m$-dimensional manifold $M$ the \emph{Morse-Smale} pairs $(f,g)$, consisting of a smooth function $f: M\to \mR$ and a Riemannian metric $g$, are characterized by the property that
all critical points of $f$ are non-degenerate and all their stable and unstable manifolds with respect to
the negative $g$-gradient flow intersect transversally. For a  given Morse-Smale pair $(f,g)$, the \emph{Morse-Smale-Witten chain complex} consists of free abelian groups\footnote{We define the Morse homology with coefficients in $\mathbb{Z}$. Other coefficient fields are also possible. We drop the coefficient group from our notation.}  $C_k(f)$ generated by the critical points of $f$ of index $k$, and boundary operators 
$$
\partial_k(f,g,\co): C_k(f) \to C_{k-1}(f),
$$
 which count the oriented intersection between stable and unstable manifolds of the negative $g$-gradient flow of $f$. The Morse-Smale-Witten complex $(C_*(f),\partial_*(f,g,\co))$ is a chain complex and its homology is the \emph{Morse homology} $\HM_*(f,g,\co)$ of a triple $(f,g,\co)$ --- called a \emph{Morse-Smale-Witten triple} ---, where $\co$ is a choice of orientations of the unstable manifolds at the critical points of $f$. Between different triples $(f^\alpha,g^\alpha,\co^\alpha)$ and $(f^\beta,g^\beta,\co^\beta)$ there exists canonical isomorphisms $\Phi^{\beta\alpha}_*:\HM_*(f^\alpha,g^\alpha,\co^\alpha)\rightarrow \HM_*(f^\beta,g^\beta,\co^\beta)$ induced by the continuation map. This defines an inverse system. The Morse homology of the manifold is then defined by
$$
\HM_*(M) := \varprojlim \HM_*(f,g,\co),
$$
where the inverse limit is taken over all Morse-Smale-Witten triples $(f,g,\co)$, with the canonical isomorphisms. A similar construction can be carried out for non-compact manifolds, using Morse functions that satisfy a coercivity condition, cf.~\cite{Schwarz:1993wg}. 
For closed manifolds there exists an isomorphism to singular homology:
\begin{equation}
\label{eqn:hom}
\HM_*(M) \cong H_*(M;\mZ),
\end{equation}
cf.~\cite{Salamon1}, \cite{Smale:1961vr}.

The above results still apply if we consider compact manifolds with boundary, for which $df(x)\nu \not = 0$, for all $x\in \partial M$, where $\nu$ is an outward pointing normal on the boundary.\footnote{The outward pointing normal is defined as $\nu = -\frac{\nabla_g h}{|\nabla_g h|_g}$,
where $h: M \to [0,\infty)$ is smooth boundary defining function with $h^{-1}(0) = \partial M$, and $dh|_{\partial M}\not=0$.} This implies that $f$ has no critical points on the boundary.  The boundary splits as $\partial M=\partial M_-\cup \partial M_+$, where  $\partial M_-$ is the union of the  components where $df(x)\nu <0$ and $\partial M_+$  is the union of the components  where $df(x)\nu >0$. In this case the Morse homology can be linked  to the singular homology of $M$ as follows
\begin{equation}
\label{eqn:hom1}
\HM_*(M) \cong H_*(M,\partial M_-;\mZ),
\end{equation}
cf.~\cite{Kronheimer}, \cite{Schwarz:1993wg}.
When $\partial M_- = \varnothing$, i.e.~$M$ has no boundary or $df(x)\nu >0$ for all $x\in \partial M$, we have $\HM_*(M)\cong H_*(M;\mathbb{Z})$. The classical Morse relations/inequalities for Morse functions are an immediate corollary.
The isomorphism in (\ref{eqn:hom1}) also holds in the more general setting when the boundary allows points where $df(x)\nu = 0$,
with the additional requirement that such points are `external tangencies' for the negative gradient flow. The latter can also be generalized to piecewise smooth boundaries, cf. Section~\ref{sec:relblock}.

\subsection{Morse-Conley-Floer Homology} 
For arbitrary flows an analogue of Morse homology can be defined. Let $M$ be a, not necessarily compact, smooth $m$-dimensional manifold without boundary. A smooth function $\phi: \mR \times M \to M$ is called a \emph{flow}, or \emph{$\mR$-action} on $M$ if:
\begin{enumerate}
\item[(i)] $\phi(0,x) = x$, for all $x\in M$ and \item[(ii)] $\phi\bigl(t,\phi(s,x)\bigr) = \phi(t+s,x)$, for all $s,t \in \mR$ and $x\in M$.
\end{enumerate}
A smooth flow satisfies the differential equation
$$
\frac{d}{dt} \phi(t,x) = X\bigl(\phi(t,x)\bigr),\quad\hbox{with}\quad X(x) = \frac{d}{dt} \phi(t,x)\Bigl|_{t=0} \in T_xM,
$$
the associated vector field of $\phi$ on $M$. A subset $S\subset M$ is called \emph{invariant} for $\phi$ if $\phi(t,S) = S$, for all $t\in \mR$. Examples of invariant sets are fixed points and periodic orbits.

A compact neighborhood $N\subset M$ is called an \emph{isolating neighborhood} for $\phi$ if $\Inv(N,\phi) \subset \Int(N)$, where $$
\Inv\bigr(N,\phi\bigl) = \{x\in N~|~\phi(t,x) \in N,~\forall t\in \mR\},
$$
is called the maximal invariant set in $N$. An invariant set $S$ for which there exists an isolating neighborhood $N$  with $S=\Inv(N,\phi)$, is called an \emph{isolated invariant set}. Note that there are many isolating neighborhoods for an isolated invariant set $S$. Isolated invariant sets are compact. 
For analytical reasons, we need isolating neighborhoods with an appropriate manifold structure, in the sense that
boundary of such a neighborhood is piecewise smooth and the flow $\phi$ is transverse to the smooth components of the boundary. Such isolating neighborhoods are called \emph{isolating blocks}, cf.~Definition~\ref{defn:block}. 
Every isolated invariant set $S = \Inv(N,\phi)$ admits an isolating block $B\subset N$.  
Isolating blocks are used to prove the existence of Lyapunov functions.

A smooth \emph{Lyapunov function} for an isolated invariant set $S$ is a smooth function $\f : M\rightarrow \mR$, such that $\frac{d}{dt}\bigr|_{t=0} \f(\phi(t,x))<0$ for $x\in N\setminus S$. Denote the set of Lyapunov functions by $\lyap(S,\phi)$. This set is non-empty, cf.~Proposition~\ref{prop:lyap1}. If $(f,g)$ is a Morse-Smale pair, with $f$ is an arbitrary small Morse perturbation of a Lyapunov function $\f$, then one can define the Morse homology for the quadruple $(f,g,N,\co)$, for some choice of orientation $\co$ of unstable manifolds of the critical points of $f$ in $N$. The Morse homology $\HM_*(f,g,N,\co)$ is independent (up to canonical isomorphisms) of the isolating block, the Lyapunov function, the Morse perturbation, the metric and the chosen orientations, which leads to the definition of the \emph{Morse-Conley-Floer homology} of $(S,\phi)$ as an inverse limit
\begin{equation*}
\HI_*(S,\phi) := \varprojlim \HM_*(f,g,N,\co).
\end{equation*}
This is also an invariant for the pair $(N,\phi)$, for any isolating neighborhood $N$ for $S$, if one takes the inverse limit over a fixed isolating neighborhood $N$.                                                                      
The important properties of the Morse-Conley-Floer homology  can be summarized as follows:

\begin{itemize}
\item[(i)] 
Morse-Conley-Floer homology $\HI_k(S,\phi)$ is of finite rank for all $k$ and $\HI_k(S,\phi) =0$, for all $k<0$ and $k> \dim M$.
\item[(ii)] 
If $S=\varnothing$ for some isolating neighborhood $N$, i.e. $\Inv(N,\phi) = \varnothing$, then $\HI_*(S,\phi) \cong 0$. Thus  $\HI_*(S,\phi) \not = 0$ implies that $S \not = \varnothing$, which is an important tool for finding non-trivial isolated invariant sets.
\item[(iii)] 
The Morse-Conley-Floer homology satisfies a global continuation principle. If isolated invariant sets $(S_0,\phi_0)$ and $(S_1,\phi_1)$ are related by continuation, see Definition~\ref{def:globalcont}, then
\begin{equation}
\label{eqn:cont-S}
\HI_*(S_0,\phi_0)\cong \HI_*(S_1,\phi_1).
\end{equation}
This allows for the computation of the Morse-Conley-Floer homology in non-trivial examples.
\item[(iv)] 
Let $\{S_i\}_{i\in I}$, indexed by a finite poset $(I,\le)$, be a Morse decomposition for $S$, see Definition \ref{defn:MD}. The sets $S_i$ are Morse sets and are isolated invariant sets by definition. Then,
\begin{equation}
\label{eqn:MRfor-S}
\sum_{i\in I} P_t(S_i,\phi)  = P_t(S,\phi) +  (1+t) Q_t.
\end{equation}
where $P_t(S,\phi)$ is the Poincar\'e polynomial of $\HI_*(S,\phi)$, and $Q_t$ is a polynomial with non-negative coefficients. These relations are called the Morse-Conley relations and generalize the classical Morse relations for gradient flows. 
\item[(v)] 
Let $S$ be an isolated invariant set for $\phi$ and let $B$ be an isolating block for $S$, see Definition\ \ref{defn:block}. Then
\begin{equation}
\HI_*(S,\phi) \cong H_*(B,B_-;\mZ),
\end{equation}
where 
$B_-  = \{ x\in \partial B~|~X(x)~ \hbox{is outward pointing}\}$\footnote{A vector $X(x)$ is outward pointing at a point $x\in \partial B$ if
$X(x) h<0$, where the function $h: B\to [0,\infty)$ is any boundary defining function for $B_-$.
An equivalent characterization is $g(X(x),\nu(x))>0$, where $\nu$ is the outward pointing $g$-normal vector field on $B_-$. These conditions do not depend on $h$ nor $g$.} 
and is called the `exit set'.
\end{itemize}

Note that in the case that $\phi$ is the gradient flow of a Morse function on a compact manifold, then Property (v) recovers the results of Morse homology, by setting $S=M$. Property (v) also justifies the terminology Morse-Conley-Floer homology, since the construction uses Morse/Floer homology and recovers the classical homological Conley index. In the subsequent sections we construct the Morse-Conley-Floer homology and prove the above properties.

\subsection{Functoriality}

We recall the functorial behavior of Morse homology. Let $(f^\alpha,g^\alpha,\co^\alpha)$ and $(f^\beta,g^\beta,\co^\beta)$ be Morse-Smale-Witten triples on closed manifolds $M^\alpha$ and $M^\beta$. Under suitable transversality assumptions, cf.\ \cite{AS-1}, \cite{Kronheimer}, a smooth function $h^{\beta\alpha}:M^\alpha\rightarrow M^\beta$ induces a map
$$
h^{\beta\alpha}_k: C_k(f^\alpha)\rightarrow C_k(f^\beta),
$$
by counting intersections points in $W^u(x)\cap (h^{\beta\alpha})^{-1}(W^s(y))$, where $x$ and $y$ are critical points of index $k$ of $f^\alpha$ and $f^\beta$. The induced map is a chain map which commutes with the canonical isomorphisms, and hence induces a map $h^{\beta\alpha}_*:HM_*(M^\alpha)\rightarrow HM_*(M^\beta)$ between the Morse homologies. On the homology level the induced map is functorial.

For Morse-Conley-Floer homology one expects a similar statement. Let $\phi^\alpha$ and $\phi^\beta$ be flows on $M^\alpha$ and $M^\beta$ respectively, and assume that $h^{\beta\alpha}:M^\alpha\rightarrow M^\beta$ is a flow map, i.e.~it is equivariant with respect to the flows $\phi^\alpha$ and $\phi^\beta$. Let $N^\beta$ be an isolating neighborhood for the isolated invariant set $S^\beta$ of the flow $\phi^\beta$, then it follows that $N^\alpha=(h^{\beta\alpha})^{-1}( N^\beta)$ is an isolating neighborhood for $S^\alpha=(h^{\beta\alpha})^{-1}(S^\beta)$ of $\phi^\alpha$. As before we expect an induced map
$$
h^{\beta\alpha}_*:HI_*(S^\alpha,\phi^\alpha)\rightarrow HI_*(S^\beta,\phi^\beta),
$$
by counting intersections which behaves functorially. The details of this will be taken up in a sequel, and we expect we can apply the functoriality to prove a fixed point theorem similar to the fixed point theorem established in~\cite{McCord}.

\subsection{Generalizations}

Morse homological tools were used in \cite{Jiang} to study two types of degenerate gradient systems: Morse-Bott functions, as well as functions with degenerate but isolated critical points. Morse-Conley-Floer homology also gives these results, but more types of degeneracies are admissible. 

Another important point to make is that the ideas explained in this paper will be exploited in order to  develop a Conley type index theory for flows on infinite dimensional spaces as well as indefinite elliptic equations.  In~\cite{Reineck} Lyapunov functions are used to construct gradient vector fields that allow computation of the Conley Index through continuation. Our approach is to avoid the Conley index, but to emphasize the intrinsically defined index, which is better behaved infinite dimensional settings. The objective is to generalize to arbitrary flows in infinite dimensions as well as extensions of Floer homology beyond gradient systems. Morse homology is an intrinsically defined invariant for $M$, which is isomorphic to the homology of $M$, and analogously Morse-Conley-Floer homology is an intrinsically defined invariant for isolated invariant sets of flows, which is in the finite dimensional case isomorphic to the Conley index. Especially with respect to our long term goal of developing Conley index type invariants for strongly indefinite flows, the Morse-Conley-Floer homology  approach can be used to define such invariants, by establishing appropriate Lyapunov functions. 

In a series of papers M. Izydorek \cite{Iz-1,Iz-2} developed a Conley index approach for strongly indefinite flows based
on Galerkin approximations and a novel cohomology theory based on spectra. Using the analogues of index pairs he established a cohomological Conley for strongly indefinite systems. 

This paper develops an  intrinsic approach towards Conley index in the finite dimensional setting.
An important application is to adopt this approach in order to  develop an intrinsic   Conley index
for strongly indefinite flows, but more importantly to strongly indefinite elliptic problems. In the latter case an appropriate flow on an ambient space does not exist and index pairs can not be defined, cf.\ \cite{MV}, which makes it unsuitable for the approach in \cite{Iz-1,Iz-2}.
This infinite dimensional approach using Lyapunov functions will be the subject of further research.

\section{Isolating blocks and Lyapunov functions}
\label{sec:isol-block}
In this section we discuss the existence of isolating blocks and Lyapunov functions.
\begin{figure}
\def\svgwidth{.7\textwidth}
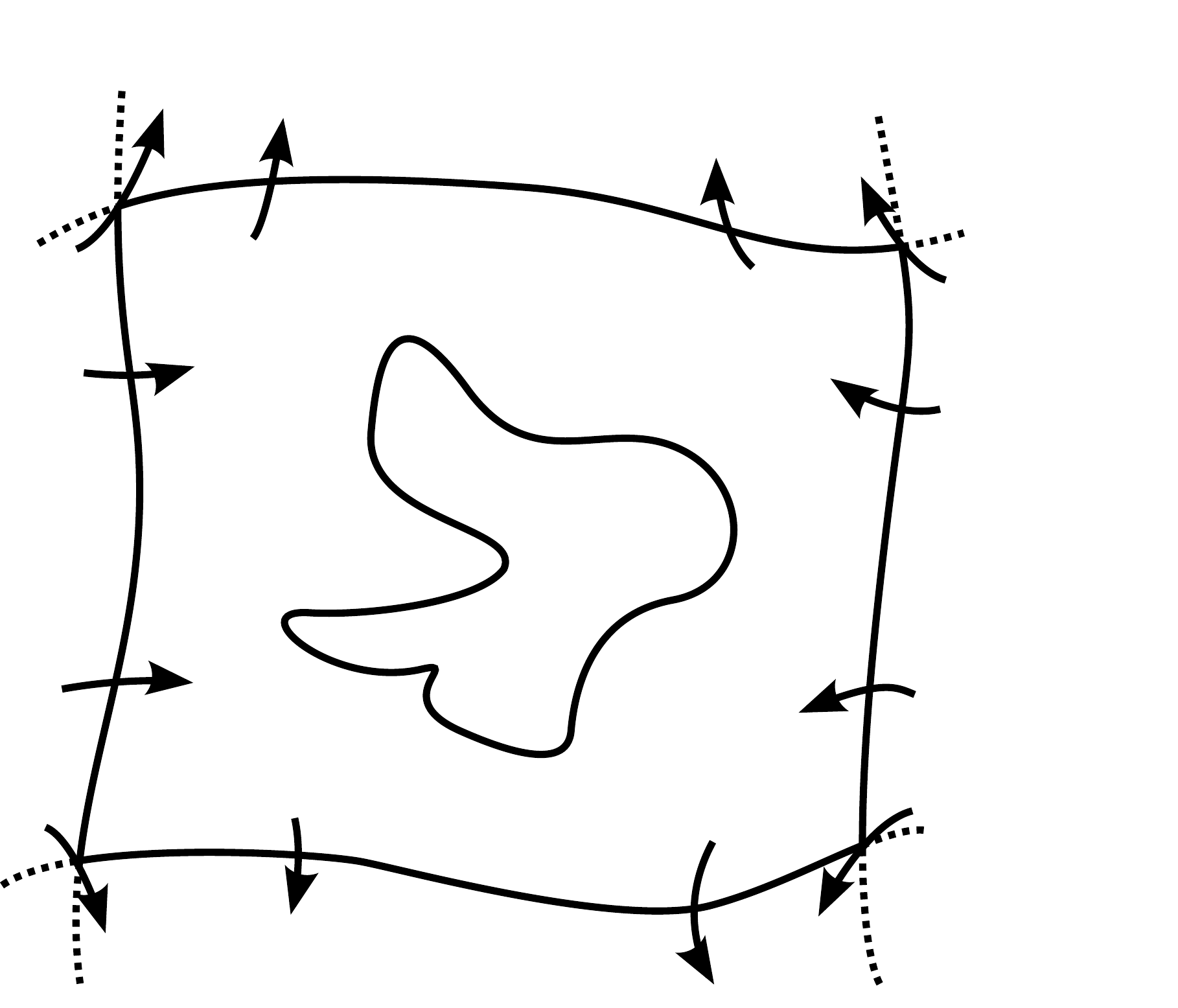
\caption{An isolating block $B$ for an isolated invariant set $S$. The boundary of $B$ decomposes into $\partial B=B_+\cup B_-\cup B_\pm$. Here $B_\pm=B_+\cap B_-$ consists of the four corner points.}
\label{fig:block}
\end{figure}
\subsection{Isolating blocks}
\label{subsec:isbl}
Isolated invariant sets admit isolating neighborhoods with piecewise smooth boundaries known as isolating blocks.
\begin{definition}
\label{defn:block}
{\em
An isolating neighborhood $B\subset M$  for $\phi$ is called a
\emph{smooth isolating block} if $B$ is a compact $m$-dimensional submanifold with piecewise smooth boundary $\partial B$ and the boundary satisfies the following requirements:
\begin{enumerate}
\item [(i)] the boundary decomposes as $\partial B = B_{+}\cup B_{-}\cup B_\pm$, with $B_{+}\cap B_{-} = B_\pm$ and
$B_{-}\setminus B_\pm$, $B_{+}\setminus B_\pm$ (when non-empty) are smooth $(m-1)$-dimensional submanifolds of $M$;

\item [(ii)] there exist open smooth $(m-1)$-dimensional submanifolds  $D_{-}, D_{+}$  such that
$B_{+}\subset D_{+}$, $B_-\subset D_-$ and $D_{-}\cap D_{+} = B_\pm $ is a $(m-2)$-dimensional submanifold (when non-empty);
\item [(iii)] The flow is transverse to $D_\pm$, i.e.~$\phi \pitchfork D_{\pm}$, and for any $x\in D_\pm$ there exists an $\epsilon>0$ such that
$\phi(I_\epsilon^\pm,x) \cap B = \varnothing$, where $I^-_\epsilon = (0,\epsilon)$ and  $I^+_\epsilon = (-\epsilon,0)$.
\end{enumerate}
The sets $B_{-}\setminus B_\pm$ and $B_{+}\setminus B_\pm$ are also called \emph{egress} and \emph{ingress} respectively and are characterized by
the property that $X\cdot \nu >0$ on $B_{-}\setminus B_\pm$ and $X\cdot \nu <0$ on $B_{+}\setminus B_\pm$, where $\nu$ is the outward pointing $g$-normal vector field on $\partial B\setminus B_{\pm}$.
}
\end{definition}
\begin{remark}
{\em
In \cite{Wilson:1973vx}, Wilson and Yorke call this concept an isolating block \emph{with corners}. For the sake of brevity we will refer to such isolating neighborhoods as (smooth) isolating blocks.
}
\end{remark}

All isolated invariant sets admit isolating blocks. 

\begin{proposition}[Wilson-Yorke \cite{Wilson:1973vx}]
\label{prop:WY}
For any isolating neighborhood $N\subset M$ of $S$, there exists a smooth isolating block $B\subset N$, such that $\Inv(B,\phi) =  \Inv(N,\phi) =  S$.
\end{proposition}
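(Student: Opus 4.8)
The plan is to pass to a slightly smaller isolating neighborhood and then carve the block out as a common sublevel set of two smooth functions obtained by averaging a cut-off along the flow. Since $S=\Inv(N,\phi)$ is compact and contained in $\Int N$, first choose a compact isolating neighborhood $N'$ with $S\subset\Int N'\subset N'\subset\Int N$; from $S\subset N'\subset N$ and $\Inv(N,\phi)=S$ one obtains $\Inv(N',\phi)=S$, so it is enough to produce a smooth isolating block $B\subset\Int N'$, so that then automatically $\Inv(B,\phi)=\Inv(N,\phi)=S$.

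Fix a smooth $\rho\colon M\to[0,1]$ with $\rho\equiv 0$ on an open neighborhood $U$ of $S$ with $\overline U\subset\Int N'$, and $\rho\equiv 1$ on $M\setminus\Int N'$, and put
\[
g^{+}(x)=\int_{0}^{\infty}e^{-s}\rho\bigl(\phi(s,x)\bigr)\,ds,\qquad g^{-}(x)=\int_{-\infty}^{0}e^{s}\rho\bigl(\phi(s,x)\bigr)\,ds.
\]
The first task is to record the elementary properties: the exponential weights make $g^{\pm}$ smooth, by differentiation under the integral sign, with values in $[0,1]$; one has $g^{+}(x)=0$ precisely when the forward orbit of $x$ stays in $\rho^{-1}(0)\subset N'$ for all time, which, as $N'$ is isolating, forces $\omega(x)\subset S$, and dually for $g^{-}$, so $\{g^{+}=g^{-}=0\}=S$; and the flow property yields the derivatives along the flow
\[
X\cdot g^{+}=g^{+}-\rho,\qquad X\cdot g^{-}=\rho-g^{-}.
\]

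Now set $B=\{g^{+}\le\epsilon\}\cap\{g^{-}\le\epsilon\}$ for a small $\epsilon>0$. The remaining steps are: (a) show that for $\epsilon$ small $B$ is compact and contained in $\Int N'$, which by the first paragraph gives $\Inv(B,\phi)=S$; (b) choose $\epsilon$ to be a common regular value of $g^{+}$ and $g^{-}$ for which, moreover, $\{g^{+}=\epsilon\}$ and $\{g^{-}=\epsilon\}$ meet transversally in a neighborhood of $B$ (a dense condition on small $\epsilon$, by Sard's theorem and openness), and set $B_{-}=\partial B\cap\{g^{+}=\epsilon\}$, $B_{+}=\partial B\cap\{g^{-}=\epsilon\}$, $B_\pm=B_{-}\cap B_{+}$, with $D_{\mp}$ small open enlargements of $\{g^{\pm}=\epsilon\}$ near $B$; this produces the submanifold-with-corners structure of Definition~\ref{defn:block}(i)--(ii); (c) verify the flow conditions of Definition~\ref{defn:block}(iii): since the outward $g$-normal on $\{g^{+}=\epsilon\}$ is proportional to $\nabla g^{+}$, one has $X\cdot\nu$ proportional to $X\cdot g^{+}=\epsilon-\rho$, so this face is egress exactly where $\rho<\epsilon$, and dually $\{g^{-}=\epsilon\}$ is ingress exactly where $\rho<\epsilon$; thus everything reduces to the claim $\rho<\epsilon$ on $B$, after which the escape condition $\phi(I^{\pm}_{\epsilon},x)\cap B=\varnothing$ and the external tangency along the corner $B_\pm$, where $X\cdot g^{+}>0$ and $X\cdot g^{-}<0$ at the same time, both follow.

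The main obstacle is exactly that claim, $\{\rho\ge\epsilon\}\cap B=\varnothing$, i.e.\ $\rho(x)\ge\epsilon\Rightarrow\max\{g^{+}(x),g^{-}(x)\}>\epsilon$; it is false for a careless cut-off, because a point of $\{\rho\ge\epsilon\}$ lying on an orbit homoclinic to $S$ can have both $g^{\pm}$ small. The heart of the proof is therefore to choose $\rho$ — together with $U$ and $\epsilon$ — so that the transition set $\{0<\rho<1\}$ is thin and positioned relative to the dynamics on $N'$ in such a way that any orbit reaching $\{\rho\ge\epsilon\}$ must spend at least a fixed positive amount of time in $\{\rho=1\}$, on the forward or the backward side, before it can return to $U$; this forces $\max\{g^{+},g^{-}\}>\epsilon$ there once $\epsilon$ is taken small enough. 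Granting this, (a) is a routine compactness argument and (b)--(c) are carried out as above. Alternatively, one may quote Conley's topological isolating-block theorem and only perform the smoothing, which is the route of Wilson--Yorke~\cite{Wilson:1973vx}.
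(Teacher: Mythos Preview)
The paper does not give its own proof of this proposition; it is stated with attribution to Wilson--Yorke and then used as input for the rest of the paper. So there is no argument in the text to compare against. Your construction via exponentially weighted flow-averages of a cut-off is, however, very much in the spirit of the Wilson--Yorke/Conley--Easton argument, and the elementary facts you record --- smoothness of $g^{\pm}$, the identities $X\!\cdot g^{+}=g^{+}-\rho$ and $X\!\cdot g^{-}=\rho-g^{-}$, and $\{g^{+}=g^{-}=0\}=S$ --- are all correct.

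You also correctly locate the one genuine gap, namely $\rho<\epsilon$ on $B$, and you are right that it fails for an arbitrary cut-off. Your verbal fix points the right way but is not carried out; here is a concrete choice of $U$ that closes it cleanly. By isolation and compactness of $\partial N'$ there is $T>0$ such that every $x\in\partial N'$ satisfies $\phi(t,x)\notin N'$ for some $|t|\le T$. Put $V=\{x:\phi([-T,T],x)\subset N'\}$; this is compact with $S\subset\Int(V)\subset V\subset\Int(N')$. Now choose any open $U$ with $V\subset U\subset\overline U\subset\Int(N')$ and build $\rho$ from this $U$. If $\rho(x)>0$ then $x\notin U\supset V$, so the orbit of $x$ meets $\partial N'$ at some time $|t^{\pm}|\le T$; since $\rho=1$ on $\partial N'$, compactness gives a uniform $\delta_0>0$ with $\rho(\phi(s,y))\ge\tfrac12$ for all $y\in\partial N'$ and $|s|\le\delta_0$, whence $\max\{g^{+}(x),g^{-}(x)\}\ge e^{-T}\sinh\delta_0=:c'$. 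Taking $\epsilon<c'$ forces $\rho\equiv 0$ on $B$, which is stronger than you need, and your steps (a)--(c) then go through. One further small point: to obtain a smooth corner $B_{\pm}$ via Sard, allow two thresholds $B=\{g^{+}\le\epsilon_1\}\cap\{g^{-}\le\epsilon_2\}$ and apply Sard to the map $(g^{+},g^{-})\colon M\to\mR^2$, rather than insisting on a single common $\epsilon$.
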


\subsection{Lyapunov functions}
\label{subsec:lyapfnc}
The existence of isolating blocks implies the existence of global Lyapunov functions with special properties with respect to isolated invariant sets.

\begin{definition}
\label{defn:lyap}
{\em
A smooth  \emph{Lyapunov function} for $(S,\phi)$ is a smooth function $\f: M \to \mR$  
satisfying the properties:
\begin{enumerate}
\item [(i)] $\f\bigl|_{S} = {\rm constant}$;
\item [(ii)] $\frac{d}{dt}\bigr|_{t=0} f\bigl(\phi(t,x)\bigr) <0$ for all
$x\in N\setminus S$, for some isolating neighborhood $N$ for $S$.\footnote{Property (ii) will also be referred to as the \emph{Lyapunov property} with respect to an isolating neighborhood $N$ for $S$.}
\end{enumerate}
The set of smooth Lyapunov functions for $S$ is denoted by $\lyap(S,\phi)$.
}
\end{definition}

In the subsequent arguments we will always assume that the gradient of $f$ defines a global flow. All arguments in the subsequent sections only care about the behavior of $f$ inside $N$. Without loss of generality we can take $f$ to be constant outside a neighborhood of $N$. This does not affect the Lyapunov property of $f$, and it ensures that any gradient of $f$ defines a global flow. If $M$ is closed the gradient of $f$ always defines a global flow.

The set of smooth Lyapunov functions is non-empty, cf. Proposition~\ref{prop:lyap1}, and also convex in the following sense. 

\begin{lemma}
\label{prop:convex}
Let $f_\phi^\alpha, f_\phi^\beta: M\to \mR$  be Lyapunov functions for $S$. Then,
\begin{enumerate}
\item[(i)] for all $\lambda\in \mR$ and all $\mu\in \mR^+$, it holds that $\lambda + \mu f_\phi^\alpha \in \lyap(S,\phi)$;
\item[(ii)] for all $\lambda,\mu \in \mR^+$, it holds that $f_\phi^\gamma=\lambda f_\phi^\alpha+\mu f_\phi^\beta\in \lyap(S,\phi)$.

\end{enumerate}

\end{lemma}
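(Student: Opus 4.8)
The plan is to verify directly the two defining conditions of a Lyapunov function in Definition~\ref{defn:lyap}, exploiting that both are compatible with the relevant linear operations. Two elementary remarks do all the work. First, the restriction-to-$S$ condition: if $h|_S$ is constant then $(\lambda+\mu h)|_S$ is constant, and a sum of functions constant on $S$ is constant on $S$. Second, the Lyapunov property: since $\frac{d}{dt}\big|_{t=0} h(\phi(t,x)) = dh(x)\,X(x)$, with $X$ the generating vector field, the infinitesimal decrease is, at each fixed point $x$, a linear functional of $h$.

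For part (i), let $N^\alpha$ be an isolating neighborhood witnessing the Lyapunov property for $f_\phi^\alpha$. Then $\lambda+\mu f_\phi^\alpha$ is constant on $S$, and for $x\in N^\alpha\setminus S$ we have $d(\lambda+\mu f_\phi^\alpha)(x)X(x) = \mu\, df_\phi^\alpha(x)X(x) < 0$ because $\mu>0$; hence $\lambda+\mu f_\phi^\alpha\in\lyap(S,\phi)$, with the same isolating neighborhood $N^\alpha$.

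For part (ii), let $N^\alpha$ and $N^\beta$ witness the Lyapunov property for $f_\phi^\alpha$ and $f_\phi^\beta$ respectively, and set $N=N^\alpha\cap N^\beta$. The step I would carry out with some care here is to check that $N$ is again an isolating neighborhood for $S$: it is compact, it contains $S$ in its interior since $S\subset\Int(N^\alpha)\cap\Int(N^\beta)=\Int(N)$, and it satisfies $S\subset\Inv(N,\phi)\subset\Inv(N^\alpha,\phi)=S$, so that $\Inv(N,\phi)=S\subset\Int(N)$. For $x\in N\setminus S$ one then has $x\in N^\alpha\setminus S$ and $x\in N^\beta\setminus S$, whence $df_\phi^\gamma(x)X(x) = \lambda\, df_\phi^\alpha(x)X(x) + \mu\, df_\phi^\beta(x)X(x) < 0$ because $\lambda,\mu>0$; together with $f_\phi^\gamma|_S$ constant this shows $f_\phi^\gamma\in\lyap(S,\phi)$.

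No genuine obstacle is anticipated: the argument is bookkeeping rather than substance. The only step that is not a one-line verification is the observation that the intersection of two isolating neighborhoods of the same isolated invariant set is again an isolating neighborhood, which is immediate from the monotonicity of $N\mapsto\Inv(N,\phi)$ under inclusion together with $S=\Inv(N^\alpha,\phi)=\Inv(N^\beta,\phi)$.
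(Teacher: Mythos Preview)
Your proof is correct and follows essentially the same approach as the paper: for (ii) both you and the paper take $N^\gamma=N^\alpha\cap N^\beta$ as the witnessing isolating neighborhood and verify the Lyapunov property by linearity of $h\mapsto dh(x)X(x)$. You are slightly more explicit than the paper in checking that $N^\alpha\cap N^\beta$ is indeed an isolating neighborhood for $S$ and that the constant-on-$S$ condition holds, but the argument is the same.
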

\begin{proof}
The first property is immediate. For the second property we observe that $N^\gamma = N^\alpha\cap N^\beta$
 is an isolating neighborhood for $S$ and
$$\frac{d}{dt}\biggr|_{t=0}f_\phi^\gamma\bigl(\phi(t,x)\bigr) = 
\lambda \frac{d}{dt}\biggr|_{t=0}f_\phi^\alpha \bigl(\phi(t,x)\bigr)  +\mu \frac{d}{dt}\biggr|_{t=0}f_\phi^\beta\bigl(\phi(t,x)\bigr) <0,
$$
for all $x\in N^\gamma\setminus S$.
\end{proof}

The next result is an adaption of Lyapunov functions  to smooth Lyapunov functions as described in \cite{Robbin:1992wp} and \cite{Wilson:1973vx}, which as a consequence shows that the set $\lyap(S,\phi)$ is non-empty. The following proposition is a modification of a result in \cite{Robbin:1992wp}.

\begin{figure}
\def\svgwidth{.4\textwidth}
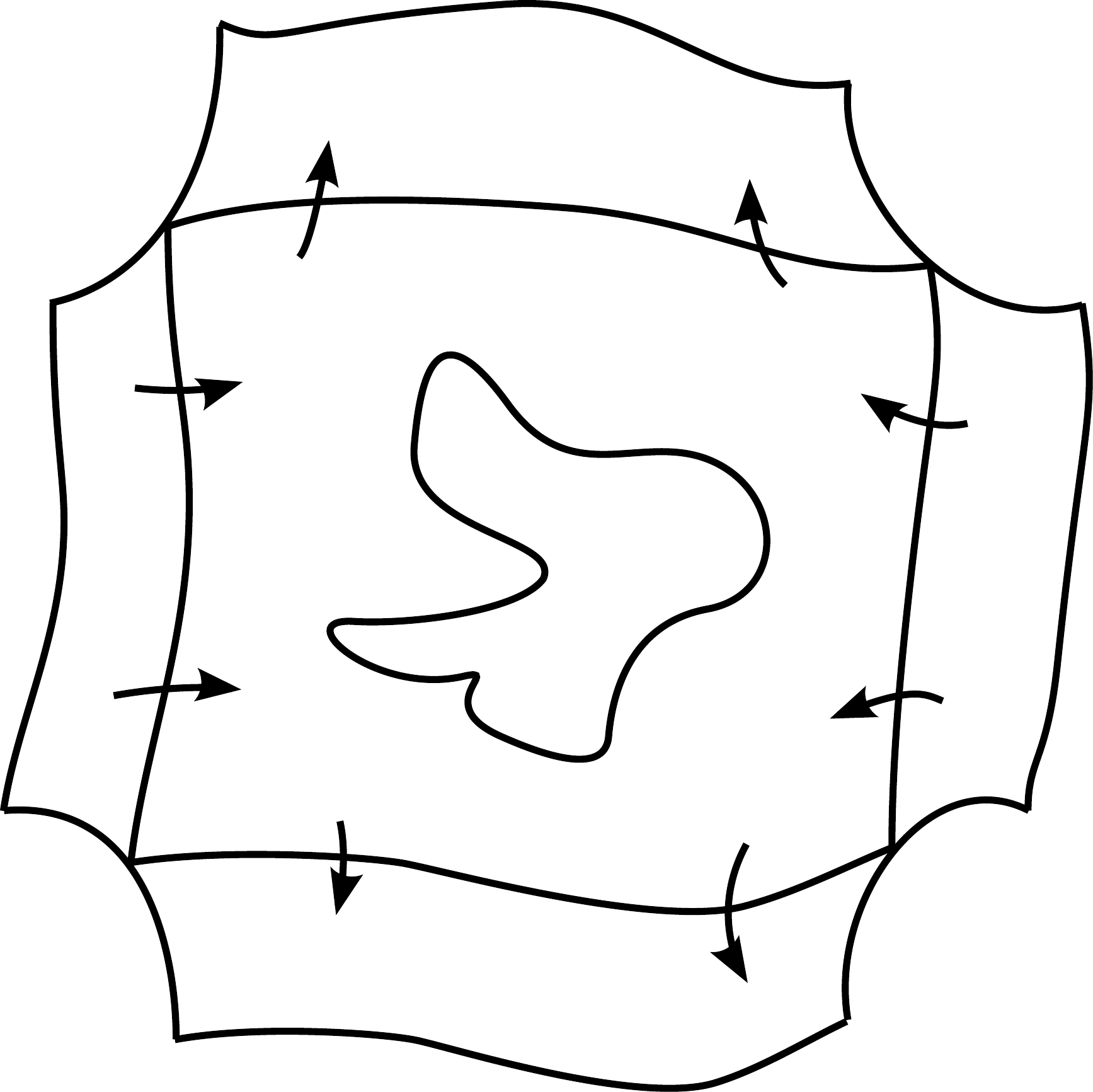
\def\svgwidth{.4\textwidth}
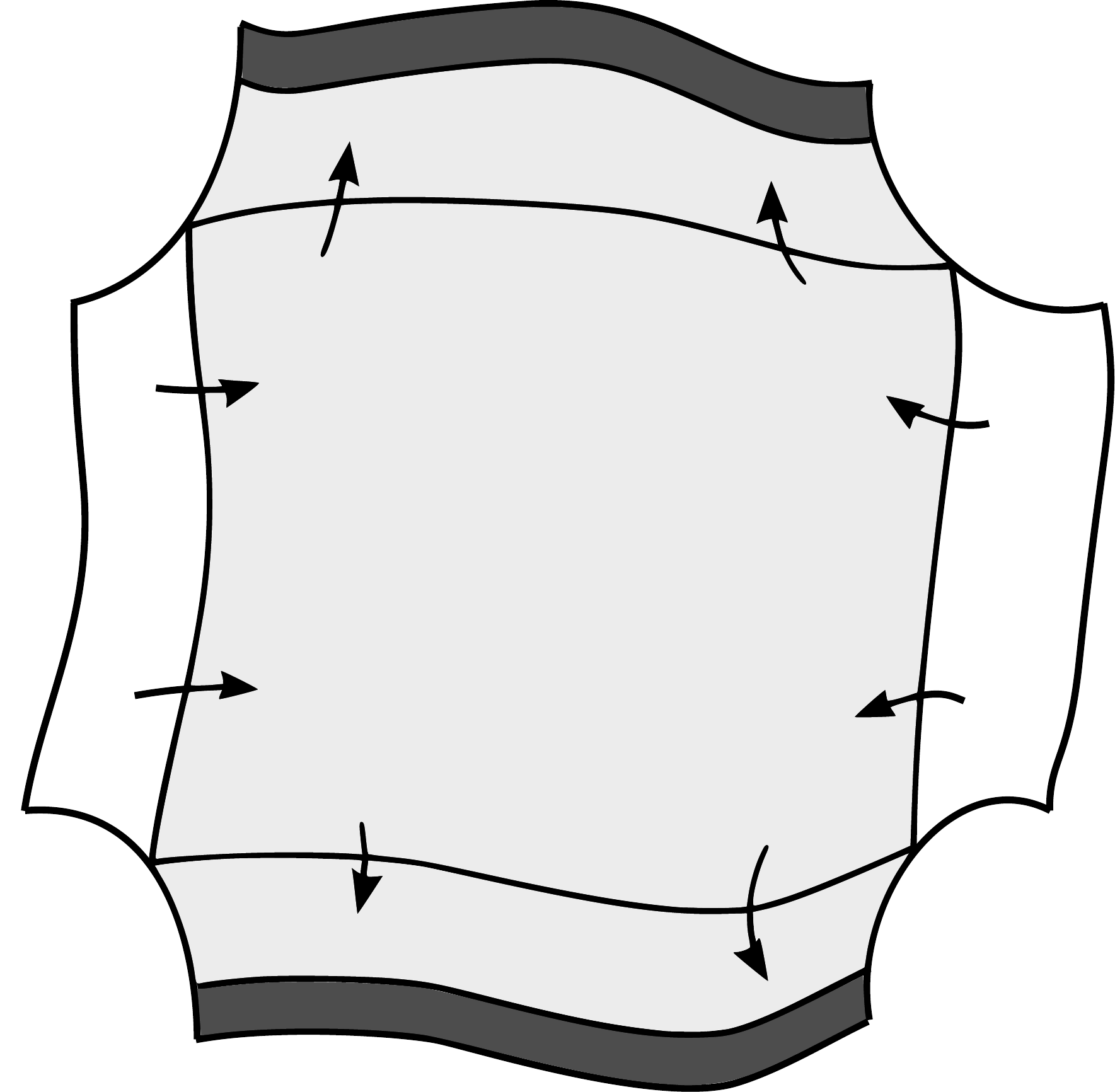
\caption{The isolating block $B$ of $S$ is slightly enlarged using the flow $\phi$ to the manifold with piecewise smooth boundary $B^\dagger$.}
\label{fig:bdagger}
\end{figure}
\begin{proposition}
\label{prop:lyap1}
Let $S\subset M$ be an isolated invariant set for $\phi$ and $B\subset M$ a smooth isolating block for $S$.
Then, there exists a smooth Lyapunov function $\f: M \to \mR$ for $S$, such that the Lyapunov property of $\f$ holds with respect to $B$. 
\end{proposition}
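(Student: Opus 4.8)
The plan is to write down $\f$ explicitly on a slight flow-enlargement of $B$ and then damp it to a constant outside. First, using that $\phi$ is transverse to the smooth faces of $\partial B$ and the external tangency condition in Definition~\ref{defn:block}, enlarge $B$ along the flow to a compact $m$-dimensional submanifold with piecewise smooth boundary $B^\dagger$ with $B\subset\Int B^\dagger$ and $\Inv(B^\dagger,\phi)=S$, whose boundary again splits into egress, ingress and tangency parts to which $\phi$ is transverse (Figure~\ref{fig:bdagger}); the collar between $B$ and $\partial B^\dagger$ is the room that will make the final cut-off smooth. Since $\Inv(B^\dagger,\phi)=S$, every $x\in B^\dagger\setminus S$ leaves $B^\dagger$ forward or backward in time, so the exit and entry times
$$
\tau^+(x)=\sup\{t\ge 0\ :\ \phi([0,t],x)\subset B^\dagger\},\qquad
\tau^-(x)=\inf\{t\le 0\ :\ \phi([t,0],x)\subset B^\dagger\},
$$
valued in $[0,\infty]$ resp. $[-\infty,0]$, are never both infinite on $B^\dagger\setminus S$, while $\tau^\pm\equiv\pm\infty$ on $S$. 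Put, with the convention $e^{\mp\infty}=0$,
$$
\f(x)=e^{\tau^-(x)}-e^{-\tau^+(x)}\qquad(x\in B^\dagger).
$$
This is bounded, vanishes identically on $S$, and using the cocycle identity $\tau^\pm(\phi(s,x))=\tau^\pm(x)-s$ for small $s$ one gets
$$
\frac{d}{ds}\bigr|_{s=0}\f\bigl(\phi(s,x)\bigr)=-\bigl(e^{\tau^-(x)}+e^{-\tau^+(x)}\bigr)<0\qquad\text{for every }x\in B^\dagger\setminus S.
$$
Finally choose $\chi\in C^\infty(M,[0,1])$ with $\chi\equiv1$ on $B$ and support in $\Int B^\dagger$; the function $\chi\cdot\f$, extended by $0$, is smooth on $M$, is constant ($=0$) on $S$, and coincides with $\f$ on $B$, so its Lyapunov property holds with respect to $B$.

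All of this is formal; the real content is the \emph{smoothness of $\f$ on $B^\dagger$}, and this is where I expect the main obstacle. Wherever $\tau^+(x)<\infty$, the forward exit point $\phi(\tau^+(x),x)$ lies on a smooth face of $\partial B^\dagger$ crossed transversally by $\phi$, so the implicit function theorem makes $\tau^+$ --- and hence $e^{-\tau^+}$ --- smooth near $x$; likewise for $\tau^-$. Thus $\f$ is automatically smooth on the transit set $B^\dagger\setminus\bigl(W^s(S)\cup W^u(S)\bigr)$, where both times are finite. The difficulty is to make $\f$ smooth across $W^s(S)\cap B^\dagger$, where $\tau^+=\infty$ and $e^{-\tau^+}$ must glue on as a \emph{flat} zero, across $W^u(S)\cap B^\dagger$, and above all near $S=W^s(S)\cap W^u(S)\cap B^\dagger$: since $S$ is only assumed isolated, there is no hyperbolicity, and the blow-up rate of $\tau^\pm$ near $S$ need not be regular enough to produce a $C^\infty$ function by this formula alone.

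I would resolve this, following \cite{Robbin:1992wp}, by a partition of unity: cover $B^\dagger$ by the transit set, a neighbourhood of $W^s(S)\cap B^\dagger$ disjoint from $W^u(S)$ (on which only $e^{\tau^-}$, which is smooth there and satisfies $X(e^{\tau^-})=-e^{\tau^-}<0$, is used), the symmetric neighbourhood of $W^u(S)\cap B^\dagger$ (on which $-e^{-\tau^+}$ is used), and a small neighbourhood $U$ of $S$; since every orbit that stays in $B^\dagger$ in forward (resp.\ backward) time has its limit set in $S$, it enters and remains in $U$, which is what lets one treat $U$ separately --- there one replaces the exit-time terms by a flow-averaged bump function supported in the compact set $V=B^\dagger\setminus U$ (which contains no complete orbit, so transit times through it are uniformly bounded), chosen so small that the sign $X\f<0$ is preserved on $B\setminus S$. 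The bookkeeping of the overlaps of this cover, together with the construction of the enlargement $B^\dagger$ from Proposition~\ref{prop:WY} and the smoothness of $\tau^\pm$ on the transit set, is the technical heart of the argument; the final damping by $\chi$ to a function on all of $M$ is then immediate.
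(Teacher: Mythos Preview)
Your route and the paper's diverge after the enlargement step. The paper also forms $B^\dagger=\phi([-3\tau,3\tau],B)$, but then it does \emph{not} work with exit times of $\phi$. Instead it multiplies $X$ by a cut-off $\zeta$ vanishing near $\partial B^\dagger$, so that the modified flow $\phi^\dagger$ has $B^\dagger$ as a compact \emph{invariant} manifold with $\phi^\dagger|_B=\phi|_B$. On this closed system one identifies two attractors $A_U=\omega(U)$ and $A_V=\omega(V)$ (with $U=\phi([0,3\tau],B)$, $V=\phi([\tau,3\tau],B_-)$) and observes $S=A_U\cap A_V^*$. Proposition~1.4 of \cite{Robbin:1992wp} then hands over, as a black box, smooth Lyapunov functions $f_{A_U},f_{A_V}$ for these attractor--repeller pairs, and $\f=\lambda f_{A_U}+\mu f_{A_V}$ does the job on $B$. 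The hard smoothing problem you correctly isolate is thus exported wholesale to a known result about attractor--repeller pairs on compact invariant sets; the trick is to cut off the \emph{vector field} rather than the function.

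Your direct construction has a genuine gap at exactly the point you flag. First, a partition-of-unity combination $\sum\rho_i f_i$ of local Lyapunov pieces is not itself Lyapunov: the Lie derivative contains $\sum (X\rho_i)f_i$, and nothing in your outline controls the sign of these cross-terms. Second, the piece near $S$ is underspecified to the point of being circular. A ``flow-averaged bump function supported in $V=B^\dagger\setminus U$'' is, as written, identically zero on the neighbourhood $U$ of $S$, hence cannot produce strict decrease on $U\setminus S$; if instead you mean an average $\int\chi(\phi(t,x))\,dt$ with $\chi$ supported in $V$, you must still show this is $C^\infty$ across $W^s(S)\cup W^u(S)$ and strictly decreasing near $S$, which is precisely the content of \cite{Wilson:1973vx} or \cite{Robbin:1992wp} that you are trying to avoid reproving. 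As it stands the argument reduces to, rather than establishes, that smoothing lemma.
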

\begin{proof}

Consider the following manifold $B^{\dagger}$ with piecewise smooth boundary, defined by   
$$
B^{\dagger} = \phi\bigl([-3\tau,3\tau],B\bigr) \subset M,
$$ 
for $\tau>0$ sufficiently small. By construction $\Inv(B^{\dagger},\phi) = S$, cf.\ Figure \ref{fig:bdagger}. Choose a smooth cut-off function\footnote{See   \cite{Wilson:1973vx}, Thm. 1.8, for a proof of the existence.} $\zeta: B^{\dagger} \to [0,1]$
such that 
$$
\zeta^{-1}(1)=B\quad {\rm and}\quad\zeta^{-1}(0)=\phi([2\tau,3\tau],B_{-})\cup\phi([-3\tau,-2\tau],B_{+})
$$ 
and define the vector field $X^{\dagger} = \zeta X$ on $B^{\dagger}$.
The flow generated by the vector field $X^{\dagger}$ is denoted by $\phi^{\dagger}$ and $\phi^{\dagger}|_{B} = \phi$.

For the flow $\phi^{\dagger}:\mR\times B^{\dagger} \to B^{\dagger}$ we  identify the following attracting neighborhoods\footnote{A compact neighborhood $N\subset M$ is attracting if $\phi(t,N) \subset \Int(N)$ for all $t>0$. Similarly, a compact neighborhood is repelling if 
$\phi(t,N) \subset \Int(N)$ for all $t<0$.}
$$
U = \phi([0,3\tau],B)\quad{\rm and}\quad V  =  \phi([\tau,3\tau],B_-),
$$
for the flow $\phi^\dagger$ on $B^\dagger$. This yields the attractors $A_U = \omega(U)$ and $A_V = \omega(V)$ for $\phi^\dagger$ in $B^\dagger$. Following the theory of attractor-repeller pairs\footnote{A set $A\subset M$ is an \emph{attractor} for a flow $\phi$ if there exits a neighborhood $U$ of $A$ such that $A=\omega(U)$.
The associated \emph{dual repeller} is given by $A^* = \alpha(M\setminus U)$. The pair $(A,A^*)$ is called an \emph{attractor-repeller pair} for $\phi$. The pairs $(\varnothing,M)$ and $(M,\varnothing)$ are trivial attractor-repeller pairs.} (cf. \cite{KMV,KMV2}, \cite{Robbin:1992wp})
the dual repellers in $B^\dagger$ are given by $A^*_U = \alpha(W)$ and $A^*_V= \alpha(Z)$, where $W$ and $Z$ are given by
$$
W=B^\dagger\setminus U=\phi([-3\tau,0),B_+)\quad{\rm and}\quad Z=B^\dagger\setminus V=\phi([-3\tau,\tau),B).$$
By the theory of Morse decompositions (cf. \cite{KMV}, \cite{Robbin:1992wp}) we have
$$
S = A_U \cap A^*_V,
$$
cf.\ Figure\ \ref{fig:bdaggerauv}. By Proposition 1.4 in \cite{Robbin:1992wp} we obtain Lyapunov functions $f_{A_U}$ and $f_{A_V}$ which are smooth
on $B^\dagger \setminus (A_U\cup A^*_U)$ and $B^\dagger \setminus (A_V\cup A^*_V)$ respectively. They satisfy $\frac{d}{dt}f_{A_U}\bigl(\phi(t,x)\bigr) <0$ for all
$x\in B^\dagger\setminus (A_U\cup A_U^*)$, and $f_{A_U}^{-1}(0) = A_U$, and $f_{A_U}^{-1}(1) = A^*_U$, and the same for $f_{A_V}$.

Define
$$
\f = \lambda f_{A_U} +\mu  f_{A_V},\quad \lambda,\mu>0.
$$
Clearly,
 $\f\bigl|_{S}= {\rm constant}$ and $\frac{d}{dt}\f\bigl(\phi(t,x)\bigr) <0$ for all
$x\in B \setminus S$ and all times $t>0$. We extend  $\f$ to a smooth function on $M$.
\end{proof}
\begin{figure}
\def\svgwidth{.7\textwidth}
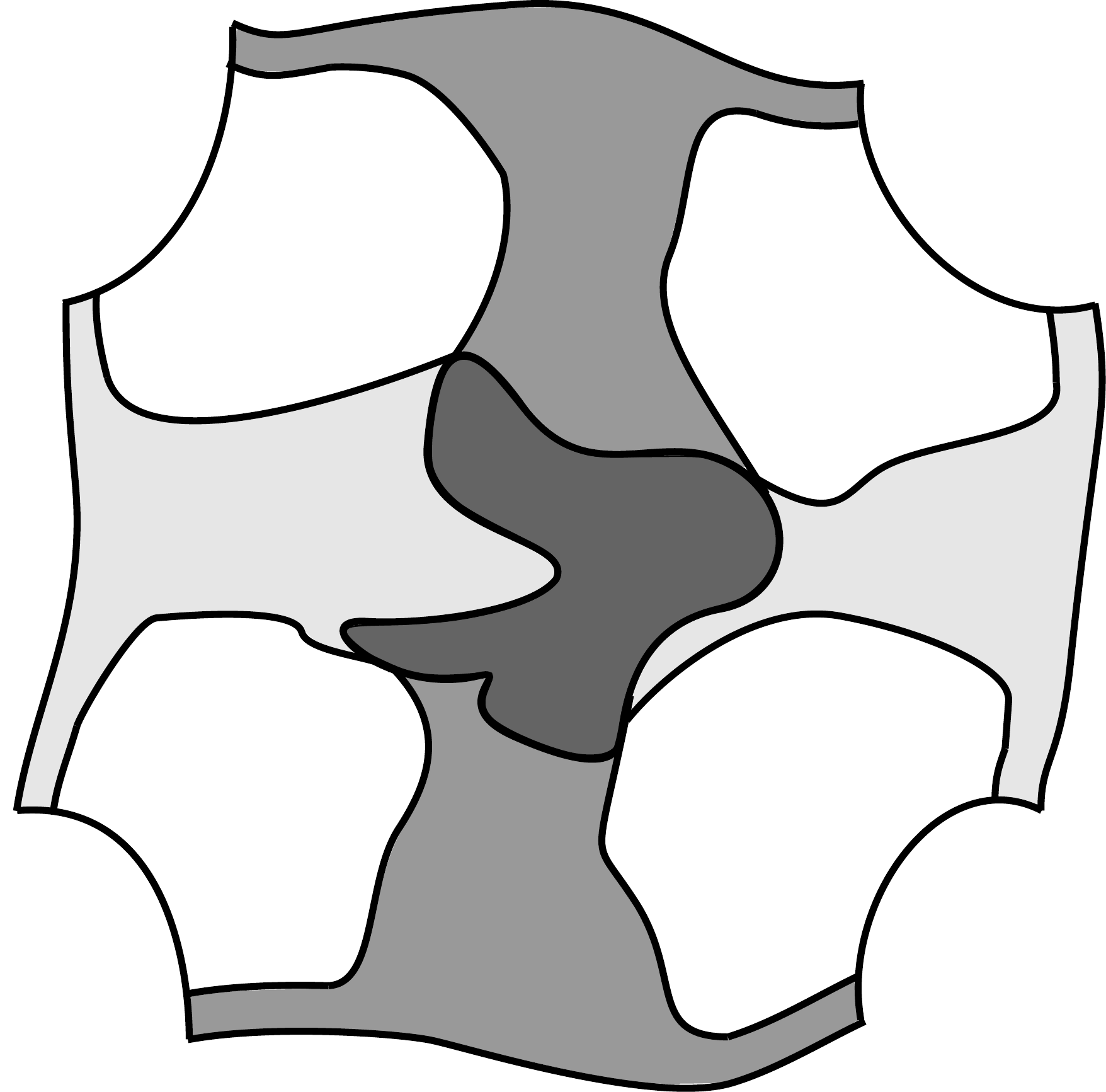
\caption{An illustration of the fact that $S=A_U\cap A^*_V$.}
\label{fig:bdaggerauv}
\end{figure}

\section{Gradient flows, Morse functions and Morse-Smale flows}
\label{sec:grad}

The gradient flow of a Lyapunov function $\f$ for $(S,\phi)$ gives information about the set $S$. 
\subsection{Gradient flows}
\label{subset:grad}
Let $g$ be a Riemannian metric on $M$ and consider the (negative) $g$-gradient flow equation
$
x' = - \nabla_g \f(x)
$,
where $\nabla_g \f$ is the gradient of $\f$ with respect to the Riemannian metric $g$.
The differential equation generates a global flow $\psi_{(\f,g)}:\mR\times M\to M$ by assumption. We say that $\psi_{(\f,g)}$ is the gradient flow of the pair $(\f,g)$ for short. The set of critical points of $\f$ is denoted by 
$$
\crit(\f) = \{x\in M~|~d\f(x) = 0\}.
$$

\begin{lemma}
\label{lem:crit1}
$\crit(\f) \cap N \subset S$.
\end{lemma}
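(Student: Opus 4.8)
The plan is to argue by contradiction, exploiting directly the defining inequality of a Lyapunov function. Fix the isolating neighborhood $N$ for $S$ with respect to which the Lyapunov property of $\f$ holds, so that $\frac{d}{dt}\bigl|_{t=0}\f\bigl(\phi(t,x)\bigr)<0$ for every $x\in N\setminus S$. Suppose, for contradiction, that there is a point $x\in \crit(\f)\cap N$ with $x\notin S$. Then $x\in N\setminus S$, so the Lyapunov property applies at $x$.

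The key computation is the chain rule: since $\phi$ is the flow of the vector field $X$, we have
$$
\frac{d}{dt}\Bigr|_{t=0}\f\bigl(\phi(t,x)\bigr) = d\f(x)\,\frac{d}{dt}\Bigr|_{t=0}\phi(t,x) = d\f(x)X(x).
$$
Because $x\in\crit(\f)$ means $d\f(x)=0$, the right-hand side vanishes, giving $\frac{d}{dt}\bigl|_{t=0}\f\bigl(\phi(t,x)\bigr)=0$. This contradicts the strict inequality supplied by the Lyapunov property. Hence no such $x$ exists, and every point of $\crit(\f)\cap N$ lies in $S$.

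There is no serious obstacle here; the only point requiring a little care is bookkeeping about which isolating neighborhood is meant. The statement is with respect to the $N$ appearing in Definition~\ref{defn:lyap}(ii) (equivalently, in Proposition~\ref{prop:lyap1} one may take $N=B$), and the conclusion $\crit(\f)\cap N\subset S$ then holds for that neighborhood; since $S=\Inv(N,\phi)$ is intrinsic to $(S,\phi)$, this is exactly what is needed in the sequel. One might also remark that the same argument shows more generally that any point of $N\setminus S$ at which $\f$ has vanishing derivative along $X$ cannot exist, which is the essential content.
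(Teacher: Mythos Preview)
Your proof is correct and is essentially the same as the paper's: both use the chain rule identity $\frac{d}{dt}\bigl|_{t=0}\f(\phi(t,x)) = d\f(x)X(x)$ together with the Lyapunov inequality on $N\setminus S$ to conclude $d\f(x)\neq 0$ there. The only cosmetic difference is that the paper phrases this as a direct contrapositive rather than by contradiction.
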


\begin{proof}
Because $\f$ is a Lyapunov function for $\phi$ we have 
$$
 d\f(x) X(x) = \frac{d}{dt}\biggr|_{t=0}\f\bigl(\phi(t,x)\bigr) <0,
$$
for all $x\in N\setminus S$. This implies that $d\f(x) \not = 0$ for $x\in N\setminus S$ and thus $\crit(\f) \cap  N \subset S$.  
\end{proof}
\begin{remark}
{\em
The reversed inclusion is not true in general. The flow $\phi$ on $M=\mR$, generated by the equation $x'=\frac{x^2}{1+x^2}$, has an equilibrium at $0$ and $S=\{0\}$ is an isolated invariant set. The function $x\mapsto -x$ is a Lyapunov function for this flow without any critical points.
}
\end{remark}
By construction $N$ is an isolating neighborhood for $\phi$, but it is also an isolating neighborhood for the gradient flow $\psi_{(\f,g)}$.

\begin{lemma}
\label{lem:inv2}
For any choice of metric $g$, the set $N\subset M$ is an isolating neighborhood for $\psi_{(\f,g)}$ and $S_{(\f,g)} = \Inv(N,\psi_{(\f,g)}) = \crit(\f)\cap N\subset S$. 
\end{lemma}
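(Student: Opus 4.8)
The plan is to show two things: first, that $N$ is an isolating neighborhood for the gradient flow $\psi_{(\f,g)}$, i.e.\ $\Inv(N,\psi_{(\f,g)}) \subset \Int(N)$; and second, to identify this maximal invariant set explicitly as $\crit(\f)\cap N$. The second identity will then combine with Lemma~\ref{lem:crit1} to give the inclusion in $S$.

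First I would establish that $\Inv(N,\psi_{(\f,g)}) = \crit(\f)\cap N$. The inclusion $\crit(\f)\cap N \subset \Inv(N,\psi_{(\f,g)})$ is immediate, since critical points of $\f$ are equilibria of $\psi_{(\f,g)}$ and hence stay in $N$ for all time. For the reverse inclusion, take $x\in N$ with $\psi_{(\f,g)}(t,x)\in N$ for all $t\in\mR$. Along the gradient flow one has the standard identity
$$
\frac{d}{dt}\f\bigl(\psi_{(\f,g)}(t,x)\bigr) = -\bigl|\nabla_g\f\bigl(\psi_{(\f,g)}(t,x)\bigr)\bigr|_g^2 \le 0,
$$
so $t\mapsto \f(\psi_{(\f,g)}(t,x))$ is nonincreasing; since the orbit is contained in the compact set $N$, this function is bounded, hence has finite limits as $t\to\pm\infty$. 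Integrating the identity over $\mR$ shows $\int_{-\infty}^{\infty}|\nabla_g\f(\psi_{(\f,g)}(t,x))|_g^2\,dt < \infty$. Now I would argue that this forces $x\in\crit(\f)$: otherwise $|\nabla_g\f(x)|_g^2 > 0$, and by continuity of $\nabla_g\f$ and of the flow there is a uniform lower bound on $|\nabla_g\f|_g^2$ along a neighborhood of the full (bounded) orbit segment through $x$ in compact $N$, contradicting integrability. (More carefully: the $\omega$- and $\alpha$-limit sets of $x$ lie in $N$ and consist of critical points by the standard LaSalle-type argument; since $\psi_{(\f,g)}(t,x)$ converges to such limit sets where $\f$ is constant, and $\f$ is strictly decreasing off $\crit(\f)$, the orbit must already sit inside $\crit(\f)$.) Thus $\Inv(N,\psi_{(\f,g)}) \subset \crit(\f)\cap N$, giving equality.

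Next I would invoke Lemma~\ref{lem:crit1}, which gives $\crit(\f)\cap N \subset S$, to conclude $S_{(\f,g)} = \Inv(N,\psi_{(\f,g)}) = \crit(\f)\cap N \subset S$. Finally, to see that $N$ is genuinely an \emph{isolating} neighborhood for $\psi_{(\f,g)}$, I must check $\crit(\f)\cap N \subset \Int(N)$. This follows because $\crit(\f)\cap N \subset S = \Inv(N,\phi) \subset \Int(N)$, using that $N$ is an isolating neighborhood for the original flow $\phi$. Hence $N$ is an isolating neighborhood for $\psi_{(\f,g)}$ as well.

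The main obstacle is the step identifying the maximal invariant set with $\crit(\f)$: one needs the gradient-flow convergence argument (boundedness of $\f$ on the compact orbit, integrability of $|\nabla_g\f|_g^2$, and ruling out orbits that are recurrent but non-critical). This is routine for gradient flows on compact sets — no Palais--Smale condition is needed since everything takes place in the compact set $N$ — but it is the one place where real content enters, as opposed to formal manipulation. Everything else is bookkeeping with the definitions of isolating neighborhood and with Lemma~\ref{lem:crit1}.
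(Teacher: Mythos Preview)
Your overall strategy matches the paper's, but there is a genuine gap in the step where you conclude that a full orbit in $N$ must consist of critical points. Your first attempt---arguing that finite $\int |\nabla_g\f|_g^2$ forces $x\in\crit(\f)$---is simply false: a heteroclinic orbit between two critical points has finite energy integral while passing through non-critical points. Your parenthetical ``more careful'' version correctly establishes that $\omega(x)$ and $\alpha(x)$ consist of critical points, but from this alone you cannot conclude that the orbit sits in $\crit(\f)$: if $\f$ took distinct values on the two limit sets, the orbit would be a genuine connecting trajectory in $\Inv(N,\psi_{(\f,g)})\setminus\crit(\f)$.

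What is missing is the use of property~(i) in Definition~\ref{defn:lyap}: the Lyapunov function $\f$ is \emph{constant} on $S$. Combined with Lemma~\ref{lem:crit1} (which you invoke only later), this forces all critical points of $\f$ in $N$ to lie at a single level $c$, hence the asymptotic values satisfy $c_+=c_-=c$. Since $t\mapsto\f(\psi_{(\f,g)}(t,x))$ is nonincreasing with equal limits, it is constant, and then the gradient identity gives $\nabla_g\f\equiv 0$ along the orbit, so $x\in\crit(\f)$. This is exactly how the paper closes the argument; you have all the surrounding pieces but omit the one non-formal input that rules out heteroclinics.
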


\begin{proof}
Let $x \in \Inv(N,\psi_{(\f,g)})$ and let $\gamma_{x} = \psi_{(\f,g)}(\mR,x)$ be the orbit through $x$, which is bounded since $\gamma_{x}\subset N$, and $N$ is compact. For the gradient flow $\psi_{(\f,g)}$ the following identity holds:
\begin{equation}
\label{eqn:grad-id}
\frac{d}{dt}\f\bigl(\psi_{(\f,g)}(t,x)\bigr) = -| \nabla_g \f\bigl(\psi_{(\f,g)}(t,x)\bigr) |_g^2\le 0,
\end{equation}
and  the function $t\mapsto \f\bigl(\psi_{(\f,g)}(t,x)\bigr)$ is decreasing and bounded. The latter implies that $\f\bigl(\psi_{(\f,g)}(t,x)\bigr)\to c_\pm$ as $t\to \pm \infty$. For any point $y\in\omega(x)$ there exist times $t_n\to\infty$ such that $\lim_{n\to\infty}\psi_{(\f,g)}(t_n,x)=y$, and hence 
$$
\f(y)= \f\bigl(\lim_{n\to \infty} \psi_{(\f,g)}(t_n,x)\bigr) = \lim_{n\to\infty} \f(\psi_{(\f,g)}(t_n,x)) = c_+.
$$
Suppose $y\not \in \crit(\f)\cap N$, then for any $\tau>0$, we have $\f\bigl(\psi_{(\f,g)}(\tau,y)\bigr)<c_+$,
since $d\f(y)\not = 0$ and thus $\frac{d}{dt}\f\bigl(\psi_{(\f,g)}(t,y)\bigr)\bigr|_{t=0}<0$.
On the other hand, by continuity and the group property, $\psi_{(\f,g)}(t_n+\tau,x)\to\psi_{(\f,g)}(\tau,y)$, which implies 
$$
c_+ = \lim_{n\to \infty} \f\bigl(\psi_{(\f,g)}(t_n+\tau,x_0)\bigr) = \f\bigl(\psi_{(\f,g)}(\tau,y)\bigr)<c_+,
$$
a contradiction. Thus the limit points are contained in $\crit(\f)\cap N$. The same argument holds for points $y\in \alpha(x)$. This proves that $\omega(x), \alpha(x) \subset \crit(\f)\cap N\subset S$ for all $x\in \Inv(N,\psi_{(\f,g)})$.
Since $\f$ is constant on $S$ it follows that $c_+=c_-=c$ and thus $\f\bigl(\psi_{(\f,g)}(t,x)\bigr) =c$ for all $t\in \mR$, i.e.
$\f\bigl|_{\gamma_x}  = c$.
Suppose $x\not \in \crit(\f)\cap N$, then $d\f(x) \not = 0$.
Using Equation \bref{eqn:grad-id} at $t=0$ yields a contradiction due to the fact that $\f$ is constant along $\gamma_{x}$.
Therefore $\Inv(N,\psi_{(\f,g)}) \subset  \crit(\f)\cap N \subset S$. This completes the proof since $\crit(\f)\cap N \subset \Inv(N,\psi_{(\f,g)})$.
\end{proof}

\subsection{Morse functions}
\label{subsec:Morsefnc}
A smooth function $f: M\to \mR$ is called \emph{Morse} on $N$ if $\crit(f)\cap N \subset \Int(N)$ and $\crit(f)\cap N$ consists of only non-degenerate critical points.   

Then the local 
stable and unstable manifolds of a critical point $x \in \crit(f)\cap N$, with respect to the gradient flow $\psi_{(f,g)}$, defined by $x' = -\nabla_g f(x)$, are given by
\begin{align*}
W^s_{\rm loc}(x;N)&=\{z\in M\,|\, \psi_{(f,g)}(t,z) \in N, \forall t\ge 0,~\lim_{t\rightarrow\infty} \psi_{(f,g)}(t,z)=x\},\\
W^u_{\rm loc}(x;N)&=\{z\in M\,|\, \psi_{(f,g)}(t,z) \in N, \forall t\le 0,~\lim_{t\rightarrow-\infty} \psi_{(f,g)}(t,z)=x\}.
\end{align*}
The sets $W^s(x)$ and $W^u(x)$ are the global stable and unstable manifolds respectively, which are defined without the restriction of points in $N$. We will write $S_{(f,g)}=\Inv(N, \psi_{(f,g)})$ for the maximal invariant set of $\psi_{(f,g)}$ inside $N$.

\begin{lemma}
\label{prop:invariantset}
Suppose $f$ is Morse on $N$, then
the maximal invariant set in $N$ of the gradient flow $\psi_{(f,g)}$ is characterized by
\begin{equation}
S_{(f,g)}=\bigcup_{x,y\in \crit(f)\cap N} W^u_{\rm loc}(x;N) \cap W^s_{\rm loc}(y;N).
\end{equation}
\end{lemma}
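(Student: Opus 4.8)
The plan is to prove the two inclusions separately, relying on the dynamical characterization of the maximal invariant set established in Lemma~\ref{lem:inv2} together with the Hartman--Grobman-type local picture near non-degenerate critical points. The key preliminary observation is that since $f$ is Morse on $N$, Lemma~\ref{lem:inv2} already gives $S_{(f,g)} = \crit(f)\cap N$ as a set-theoretic identity for the invariant set, but here we want to describe $S_{(f,g)}$ as a union of intersections of local stable and unstable manifolds, so the content is really about identifying which points of $M$ lie on bounded complete orbits of $\psi_{(f,g)}$ that stay in $N$.

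First I would prove the inclusion $\supseteq$. Take $z \in W^u_{\rm loc}(x;N)\cap W^s_{\rm loc}(y;N)$ for critical points $x,y\in\crit(f)\cap N$. By definition of the local unstable manifold, $\psi_{(f,g)}(t,z)\in N$ for all $t\le 0$, and by definition of the local stable manifold, $\psi_{(f,g)}(t,z)\in N$ for all $t\ge 0$; hence the full orbit through $z$ remains in $N$, so $z\in\Inv(N,\psi_{(f,g)}) = S_{(f,g)}$. This direction is essentially immediate from the definitions.

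For the inclusion $\subseteq$, let $z\in S_{(f,g)} = \Inv(N,\psi_{(f,g)})$. By the argument in the proof of Lemma~\ref{lem:inv2}, the function $t\mapsto f(\psi_{(f,g)}(t,z))$ is decreasing and bounded, and its $\omega$-limit set $\omega(z)$ and $\alpha$-limit set $\alpha(z)$ are contained in $\crit(f)\cap N$. Since the critical points are non-degenerate, they are isolated, so $\crit(f)\cap N$ is a finite set; a connected limit set contained in a finite set of points is a single point. Hence there exist $x,y\in\crit(f)\cap N$ with $\alpha(z) = \{x\}$ and $\omega(z) = \{y\}$, i.e.\ $\lim_{t\to-\infty}\psi_{(f,g)}(t,z) = x$ and $\lim_{t\to+\infty}\psi_{(f,g)}(t,z) = y$. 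Combined with $\psi_{(f,g)}(t,z)\in N$ for all $t\in\mR$ (which holds since $z$ is in the maximal invariant set), this says precisely that $z\in W^u_{\rm loc}(x;N)\cap W^s_{\rm loc}(y;N)$, completing the inclusion.

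The main point requiring care is the step asserting that a compact connected limit set contained in a finite (hence totally disconnected) set of critical points must be a single point; this uses that $\omega$- and $\alpha$-limit sets of bounded orbits are connected, which in turn relies on $N$ being compact and the orbit staying in $N$. One should also double-check that the limit sets are nonempty — again guaranteed by compactness of $N$. No genericity or transversality of the stable/unstable manifolds is needed for this lemma; that only enters later when defining the boundary operator. So I expect the only mild obstacle is stating the limit-set connectedness argument cleanly, and everything else unwinds directly from Lemma~\ref{lem:inv2} and the definitions of $W^{u}_{\rm loc}$ and $W^{s}_{\rm loc}$.
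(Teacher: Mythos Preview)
Your proof is correct and follows essentially the same route as the paper's: both argue $\supseteq$ directly from the definitions of $W^{u/s}_{\rm loc}$, and $\subseteq$ by invoking the gradient-flow argument of Lemma~\ref{lem:inv2} to place $\alpha(z),\omega(z)$ inside $\crit(f)\cap N$, then using that non-degenerate critical points are isolated (you make the connectedness-of-limit-sets step explicit where the paper simply asserts ``the limits exist''). One correction to your opening paragraph: Lemma~\ref{lem:inv2} does \emph{not} give $S_{(f,g)}=\crit(f)\cap N$ for a Morse $f$ --- that identity holds only for Lyapunov functions $f_\phi$, where constancy on $S$ rules out connecting orbits; for a general Morse $f$ the invariant set genuinely contains connecting orbits, which is exactly what this lemma establishes --- but your actual argument never uses that misstatement, so the proof stands.
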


\begin{proof}
If $z\in W^u_{\rm loc}(x;N) \cap W^s_{\rm loc}(y;N)$ for some $x,y \in \crit(f)\cap N$, then by definition, $\psi_{(f,g)}(t,z)\in N$ for all $t\in \mR$, and hence $z\in  S_{(f,g)}$ which shows that $W^u_{\rm loc}(x;N) \cap W^s_{\rm loc}(y;N) \subset S_{(f,g)}$. Conversely, if $z\in S_{(f,g)}$, then $\psi_{(f,g)}(t,z)\in S_{(f,g)} \subset N$ for all $t\in \mR$, i.e. $\gamma_z \subset N$. By the the same arguments as in  the proof of Lemma~\ref{lem:inv2} the limits $\lim_{t\rightarrow \pm\infty} \psi_t(z)$ exist and are critical points of $f$ contained in $S_{(f,g)}$ (compactness of $S_{(f,g)}$).   Therefore,  $z\in W^u_{\rm loc}(x;N) \cap W^s_{\rm loc}(y;N)$, for some $x,y\in\crit(f)\cap N$ which implies that $S_{(f,g)} \subset \bigcup_{x,y\in \crit(f)\cap N}W^u_{\rm loc}(x;N) \cap W^s_{\rm loc}(y;N))$.
\end{proof}

\subsection{Morse-Smale flows and isolated homotopies}
Additional structure on the set of connecting orbits is achieved by the Morse-Smale property.
\begin{definition}
\label{defn:Morse-Smale}
{\em
A gradient flow $\psi_{(f,g)}$
is called \emph{Morse-Smale} on $N$ if 
\begin{enumerate}
\item[(i)] $S_{(f,g)} \subset \Int(N)$;
\item[(ii)] $f$ is Morse on $N$, i.e.~critical points in $N$ are non-degenerate;
\item[(iii)] $W^{u}_{\rm loc}(x;N) \pitchfork W^{s}_{\rm loc}(y;N)$\footnote{The symbol $\pitchfork$ indicates that the intersection is transverse in the following sense. For each $p\in W^{u}_{\rm loc}(x;N) \cap W^{s}_{\rm loc}(y;N)$, we have that $T_p W^u(x)+T_pW^s(y)=T_p M$.}, for all $x,y\in \crit(f)\cap N$. 
\end{enumerate}
In this setting the pair $(f,g)$ is called a \emph{Morse-Smale pair} on $N$.
 }
\end{definition}

\begin{remark}
\label{rmk:notsuited}
{\em
In general Lyapunov functions are not Morse. We want to perturb the Lyapunov function to obtain a Morse-Smale pair, which we will use to define invariants. Not all Morse-Smale pairs are suitable for the construction of invariants as the following example shows. Let $\phi(t,x) = xe^t$ be a flow on $M= \mR$ and let $N=[-1,1]$ be a isolating block for $\phi$, with $S= \Inv(N,\phi) = \{0\}$.
The function $\f(x) = -\frac{1}{4} x^4$ is a Lyapunov for $(S,\phi)$. 
Let $g$ be the standard inner product on $\mR$, then the function $f(x) = \frac{1}{2} x^2$
yields a Morse-Smale flow $\psi_{(f,g)}(t,x) = xe^{-t}$ via $x' = - f'(x) = -x$ and thus $(f,g)$  is a Morse-Smale pair on $N$.
The flow $\psi_{(f,g)}$ obviously displays the wrong dynamical behavior. The reason is that one cannot find a homotopy between
$\f$ and $f$ which preserves isolation with respect to $N$. This motivates the following definition.
}
\end{remark}

\begin{definition}
\label{defn:isolMS}
{\em
Let $\psi_{(h,e)}$ be the gradient flow of $h$ with respect to the metric $e$, with the property that
 $S_{(h,e)} \subset \Int(N)$. A Morse-Smale pair $(f,g)$ on $N$ is \emph{isolated homotopic} to $(h,e)$ if
there exists a smooth homotopy $(f_\lambda,g_\lambda)_{\lambda \in [0,1]}$ between $(f,g)$ and $(h,e)$ such that 
$S_{(f^\lambda,g^\lambda)}\subset \Int(N)$ for all $\lambda \in [0,1]$.\footnote{The flows $\psi_{(f^\lambda,g^\lambda)}$ are generated by
the equations $x' = - \nabla_{g^\lambda} f^\lambda(x)$.}
The set  of such Morse-Smale pairs is denoted by
$\IMS(h,e;N)$.
}
\end{definition}

Isolation is well behaved under perturbation, cf.~\cite{Conley}.
\begin{proposition}
The set of flows preserving isolation of $N$ is open in the compact-open topology.
\label{prop:isolation}
\end{proposition}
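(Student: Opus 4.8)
The plan is to show that, given a flow $\psi$ on $M$ with $N$ an isolating neighborhood, i.e.\ $\Inv(N,\psi)\subset\Int(N)$, every flow sufficiently close to $\psi$ in the compact-open topology on $C^0(\mR\times M, M)$ (or, equivalently, every flow whose generating vector field is $C^0$-close on a neighborhood of $N$) also has $N$ as an isolating neighborhood. The key observation is a standard reformulation of the isolating property: $N$ is an isolating neighborhood for $\psi$ if and only if there is \emph{no} full orbit of $\psi$ lying in $N$ that touches $\partial N$; equivalently, for every $x\in\partial N$ there exists a time $t_x\in\mR$ with $\psi(t_x,x)\notin N$. I would first record this equivalence, which follows immediately from compactness of $\partial N$ and the definition of $\Inv$.

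The main step is a compactness argument. For each $x\in\partial N$ fix $t_x$ with $\psi(t_x,x)\in M\setminus N$, an open set; by continuity of $\psi$ there is an open neighborhood $W_x\subset M$ of $x$ such that $\psi(\{t_x\}\times W_x)\subset M\setminus N$. The sets $\{W_x\}_{x\in\partial N}$ cover the compact set $\partial N$, so finitely many $W_{x_1},\dots,W_{x_n}$ suffice, with associated times $t_{x_1},\dots,t_{x_n}$. Now I would use the definition of the compact-open topology: the condition ``$\psi'(\{t_{x_i}\}\times \overline{W_{x_i}})\subset M\setminus N$'' is, for each $i$, an open condition on $\psi'$ in the compact-open topology (the source $\{t_{x_i}\}\times \overline{W_{x_i}}$ is compact and the target $M\setminus N$ is open), once one shrinks the $W_{x_i}$ slightly so that the compact closures still map into $M\setminus N$. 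Intersecting these $n$ open sets of flows gives an open neighborhood $\mathcal{U}$ of $\psi$ in the compact-open topology such that every $\psi'\in\mathcal{U}$ satisfies: for every $x\in\partial N\subset\bigcup_i W_{x_i}$, there is an $i$ with $\psi'(t_{x_i},x)\notin N$. Hence no full $\psi'$-orbit can remain in $N$ while meeting $\partial N$, so $\Inv(N,\psi')\cap\partial N=\varnothing$, i.e.\ $\Inv(N,\psi')\subset\Int(N)$, which is exactly the statement that $N$ is an isolating neighborhood for $\psi'$. Since $\psi\in\mathcal{U}$ was arbitrary, the set of flows for which $N$ isolates is open.

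I expect the main (though modest) obstacle to be purely bookkeeping: making precise which topology ``compact-open'' refers to in the space of flows, and ensuring the passage from ``$\psi(t_{x_i},x)\notin N$ for $x\in W_{x_i}$'' to an honest open condition in that topology. The cleanest route is to work with the evaluation maps $\mathrm{ev}_i:\psi'\mapsto \psi'(t_{x_i},\cdot)$ into $C^0(\overline{W_{x_i}},M)$ and note these are continuous when the flow space carries the compact-open topology, so the preimage of the open set $\{g\in C^0(\overline{W_{x_i}},M)\mid g(\overline{W_{x_i}})\subset M\setminus N\}$ is open; the shrinking of neighborhoods (from $W_{x_i}$ to a slightly smaller one with compact closure still inside $W_{x_i}$) is what lets one replace the open cover by one with compact pieces. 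Everything else is the elementary compactness/covering argument sketched above, and no transversality or smoothness of the boundary is needed — only that $N$ is compact and $\partial N$ is closed.
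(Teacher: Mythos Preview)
Your proposal is correct and follows essentially the same approach as the paper: reformulate isolation as ``every boundary point leaves $N$ at some time,'' use continuity to get neighborhoods on which this persists, extract a finite subcover of $\partial N$ by compactness, and observe that the resulting finite intersection of subbasic compact-open sets is an open neighborhood of $\phi$ consisting of isolating flows. The only cosmetic difference is that the paper takes compact neighborhoods $U_p\ni p$ and compact time intervals $I_t\ni t$ from the outset, whereas you take open $W_x$ and single times $t_x$ and then shrink; both yield the same conclusion.
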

\begin{proof}
Let $\phi$ be a flow with isolating neighborhood $N$. Isolation implies that for all $p\in \partial N$ there exists a $t\in\mR$ such that $\phi(t,p)\in M\setminus N$. By continuity there exist compact neighborhoods $U_p\ni p$ and $I_t\ni t$ such that $\phi(I_t,U_p)\in M\setminus N$. The compactness of $\partial N$ implies that a finite number of neighborhoods $U_{p_i}$ cover $\partial N$. Now we have that
$$
\phi\in \bigcap_i \{\psi\in C(\mR\times M, M)\,|\, \psi(I_{t_i},U_{p_i})\subset M\setminus N\},
$$
and all flows in this open set are isolating. 
\end{proof}

The following proposition shows that the set of isolated homotopies $\IMS(h,e;N)$ is not empty.

\begin{proposition}
\label{prop:morse}
Let $\psi_{(h,e)}$ be a gradient flow of $(h,e)$, with the property that
 $S_{(h,e)}  \subset \Int(N)$. Then, for each $f$ sufficiently $C^2$-close to $f$, such that $(f,e)$ is a 
Morse-Smale pair on $N$, then $(f,e)$ is isolated homotopic to $(h,e)$. Consequently, $\IMS(h,e;N) \cap \{(f,e)~|~\Vert f-h\Vert_{C^2}<\epsilon\} \not = \varnothing$ for every $\epsilon >0$.
\end{proposition}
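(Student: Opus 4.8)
The plan is to combine three ingredients: (1) genericity of the Morse--Smale property in the space of metrics (or, as stated, of functions), (2) the openness of isolation under $C^0$-perturbation of the flow established in Proposition~\ref{prop:isolation}, and (3) a straightforward convexity/linear-homotopy argument to produce the isolated homotopy. First I would fix the metric $e$ and consider the gradient flow $\psi_{(f,e)}$ for $f$ in a small $C^2$-ball $B_\epsilon(h)=\{f~|~\Vert f-h\Vert_{C^2}<\epsilon\}$ around $h$. Since $\psi_{(h,e)}$ has $S_{(h,e)}\subset\Int(N)$, the flow $\psi_{(h,e)}$ isolates $N$ (in the sense of Proposition~\ref{prop:isolation}); because the map $f\mapsto \psi_{(f,e)}$ is continuous from the $C^2$-topology into the compact-open topology on flows (the vector field $-\nabla_e f$ depends continuously on $f$ in $C^1$, hence solutions depend continuously on initial data and parameters), Proposition~\ref{prop:isolation} gives an $\epsilon_0>0$ so that $\psi_{(f,e)}$ still isolates $N$, i.e. $S_{(f,e)}\subset\Int(N)$, for all $f\in B_{\epsilon_0}(h)$.

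Next I would arrange the Morse--Smale property. The subset of $f\in B_{\epsilon_0}(h)$ for which $f$ is Morse on $N$ and $(f,e)$ satisfies the transversality of local stable and unstable manifolds inside $N$ is dense (indeed residual) in $B_{\epsilon_0}(h)$; this is the standard Morse--Smale genericity statement, here in the function rather than the metric, applied on the isolating neighborhood $N$ — one perturbs $h$ first to make all critical points in $N$ nondegenerate, then perturbs further (supported away from the critical points, e.g. by adding small bump functions, or by an ambient argument) to achieve transversality of the $W^u_{\rm loc}(x;N)\pitchfork W^s_{\rm loc}(y;N)$. Combined with condition (i) already guaranteed by the previous paragraph, this shows that for every $\epsilon\le\epsilon_0$ there is an $f\in B_\epsilon(h)$ with $(f,e)$ a Morse--Smale pair on $N$ in the sense of Definition~\ref{defn:Morse-Smale}.

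Finally, given such an $f$ I would exhibit the isolated homotopy by the linear interpolation $f_\lambda=(1-\lambda)h+\lambda f$, with $g_\lambda\equiv e$ fixed. For each $\lambda\in[0,1]$ we have $\Vert f_\lambda-h\Vert_{C^2}=\lambda\Vert f-h\Vert_{C^2}<\epsilon_0$, so $f_\lambda\in B_{\epsilon_0}(h)$ and hence by the first paragraph $S_{(f_\lambda,e)}=\Inv(N,\psi_{(f_\lambda,e)})\subset\Int(N)$. This is exactly the condition in Definition~\ref{defn:isolMS}, so $(f,e)\in\IMS(h,e;N)$, and since $\epsilon\le\epsilon_0$ was arbitrary the intersection $\IMS(h,e;N)\cap B_\epsilon(h)$ is nonempty for every $\epsilon>0$.

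The main obstacle, and the only nontrivial point, is the Morse--Smale genericity step: realizing transversality of the stable/unstable manifolds by a perturbation of $f$ alone (with the metric held fixed), while keeping the perturbation small in $C^2$. One clean way around this is to note that perturbing $f$ near a critical point by a quadratic form, and away from critical points by a function whose gradient is a prescribed small vector field, suffices to apply the Sard--Smale transversality theorem to the relevant evaluation maps on the compact set $N$; alternatively, one may invoke the classical Kupka--Smale type result for gradient flows. All remaining steps — continuity of $f\mapsto\psi_{(f,e)}$, the $C^2$-smallness of the linear homotopy, and the bookkeeping with $\Int(N)$ — are routine.
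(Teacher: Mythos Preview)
Your proposal is correct and follows essentially the same route as the paper: cite a genericity result for the Morse--Smale property of $(f,e)$ (the paper invokes \cite{Austin:1995tb}), use Proposition~\ref{prop:isolation} to keep isolation under $C^2$-small perturbations of $h$ (via $C^1$-closeness of $-\nabla_e f$ to $-\nabla_e h$), and then take a short path from $(h,e)$ to $(f,e)$ that stays in the isolation-preserving neighborhood. Your explicit use of the linear homotopy $f_\lambda=(1-\lambda)h+\lambda f$ with $g_\lambda\equiv e$ and of the convexity of the $C^2$-ball is exactly the concrete realization of the paper's terse ``there is a path connecting $(f,g)$ and $(h,e)$ preserving isolation.''
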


\begin{proof}
The existence of a smooth functions $f$ for which  the Morse-Smale property holds for $-\nabla_g f$, with $g=e$, follows from the results in \cite{Austin:1995tb}.
Since $N$ is an isolating neighborhood for $\psi_{(h,e)}$, a sufficiently $C^2$-small perturbation $f$ of $h$ implies that $\nabla_{g} f$ is $C^1$-close
to the vector field $\nabla_e h$. Now $C^1$-vector fields define (local) flows, and the set of flows preserves isolation is open, by Proposition~\ref{prop:isolation}. Consequently, $S_{(f,g)} = \Inv(N,\psi_{(f,g)}) \subset \Int(N)$. If the $C^2$-perturbation is small enough there is a path connecting $(f,g)$ and $(h,e)$ preserving isolation, which proves that $(f,g) \in \IMS(h,e;N)$.
\end{proof}

\begin{remark}
\label{rmk:baire}
{\em
In \cite{Austin:1995tb} it is proved that the set of smooth functions 
on a compact manifold is a Baire space. This can be used to strengthen the statement in Proposition \ref{prop:morse} as follows:
the set of smooth functions 
in a sufficiently small $C^2$-neighborhood of 
a Morse function on $N$ is 
a  Baire space.
}
\end{remark}

\begin{remark}
\label{rmk:approx}
{\em
In \cite{Smale:1961vr} is it proved that any gradient vector field $-\nabla_e h$ can approximated
sufficiently $C^1$-close by $-\nabla_{g}f$, such   the associated flow  $\psi_{(f,g)}$
is Morse-Smale on $N$,  cf. \cite{Weber:2006tx}.
If $h$ is Morse one can  find a perturbation $g$ of the metric $e$ such that the flow $\psi_{(h,g)}$ is Morse-Smale, 
cf.~\cite{Abbondandolo:2005um}.
}
\end{remark}

From Lemma \ref{lem:inv2} it follows that for any Lyapunov function $\f$ for $(S,\phi)$ any flow $\psi_{(\f,g)}$,
$N$ is an isolating block with $\Inv(N,\psi_{(\f,g)}) \subset S$.
The choice of metric does not play a role and provides freedom in choosing isolated homotopies:
$$
\IMS(h;N) = \bigcup_{e} \IMS(h,e;N),
$$
which represents the set of Morse-Smale pairs $(f,g)$ which is isolated homotopic to $(h,e)$, for some Riemannian metric $e$ on $M$.

\begin{corollary}
\label{cor:MS-pairs}
Given a Lyapunov  function $\f\in \lyap(S,\psi)$, then there exists a Morse-Smale pair $(f, g) \in \IMS(\f;N)$ on $N$
with $f$ arbitrary $C^2$-close to $\f$.
\end{corollary}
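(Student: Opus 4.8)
The statement to prove is Corollary~\ref{cor:MS-pairs}: given a Lyapunov function $\f \in \lyap(S,\psi)$, there is a Morse-Smale pair $(f,g) \in \IMS(\f;N)$ with $f$ arbitrarily $C^2$-close to $\f$. The plan is to apply the already-established Proposition~\ref{prop:morse} after checking that its hypothesis is met by the data coming from $\f$. The only thing that needs verification is that the gradient flow $\psi_{(\f,e)}$, for a suitable choice of metric $e$, has $S_{(\f,e)} = \Inv(N,\psi_{(\f,e)}) \subset \Int(N)$.

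First I would fix any Riemannian metric $e$ on $M$ and invoke Lemma~\ref{lem:inv2}, which states that $N$ is an isolating neighborhood for $\psi_{(\f,e)}$ and that $S_{(\f,e)} = \crit(\f)\cap N \subset S$. Since $N$ is an isolating neighborhood for $(S,\phi)$ we have $S = \Inv(N,\phi) \subset \Int(N)$, and therefore $S_{(\f,e)} \subset S \subset \Int(N)$. This is precisely the hypothesis $S_{(h,e)} \subset \Int(N)$ required in Proposition~\ref{prop:morse} with $h = \f$. Applying that proposition, for every $\epsilon > 0$ there exists $f$ with $\|f - \f\|_{C^2} < \epsilon$ such that $(f,e)$ is a Morse-Smale pair on $N$ and $(f,e) \in \IMS(\f,e;N)$. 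By the definition $\IMS(\f;N) = \bigcup_e \IMS(\f,e;N)$ given just above the corollary, this gives $(f,e) \in \IMS(\f;N)$, and setting $g = e$ completes the argument.

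This is essentially a bookkeeping corollary: the substantive work is already contained in Lemma~\ref{lem:inv2} (which handles the dynamical content, namely that passing to a gradient flow of a Lyapunov function only shrinks the invariant set) and in Proposition~\ref{prop:morse} (which handles the perturbation/genericity content, drawn from \cite{Austin:1995tb} and the openness of the isolation property, Proposition~\ref{prop:isolation}). There is no real obstacle; the one point to state carefully is the chain of inclusions $S_{(\f,e)} = \crit(\f)\cap N \subset S \subset \Int(N)$, which is what licenses the application of Proposition~\ref{prop:morse}. One could remark that the metric $e$ is arbitrary here, which is consistent with the preceding discussion that the choice of metric plays no role and provides extra freedom in choosing isolated homotopies.
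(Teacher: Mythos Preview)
Your proposal is correct and follows essentially the same route as the paper: invoke Lemma~\ref{lem:inv2} to obtain the chain of inclusions $S_{(\f,e)} = \crit(\f)\cap N \subset S \subset \Int(N)$ for an arbitrary metric $e$, and then apply Proposition~\ref{prop:morse} with $h=\f$. The paper's proof is just a terser version of the same argument.
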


\begin{proof}
If $\f\in \lyap(S,\phi)$ is a Lyapunov function for $(S,\phi)$, then for the gradient flow $\psi_{(\f,g)}$ it holds that $S_{(\f,g)} = \Inv(N,\psi) = \crit(\f)\cap N \subset S \subset \Int(N)$, with $g$ arbitrary, and thus by Proposition \ref{prop:morse}
there exist a Morse-Smale pair $(f, g)$, close to $(\f,g)$, such that $S_{(f,g)} = \Inv(N,\psi_{(f,g)}) \subset \Int(N)$
and $(f,g)$ is isolated homotopic to $(\f,g)$.

\end{proof}

\section{Morse homology}
\label{sec:MH}

From Lemma \ref{lem:inv2} it follows that $N$ is a isolating neighborhood for the gradient flow $\psi_{(\f,g)}$, for any Lyapunov function $\f\in \lyap(S,\phi)$, regardless of the chosen metric $g$. Corollary \ref{cor:MS-pairs} implies that there exists sufficiently $C^2$-close smooth function $f$, such that $(f,g)$  is a Morse-Smale pair on $N$ and $(f,g)$ is isolated homotopic to $(\f,g)$, i.e. $(f,g) \in \IMS(\f;N)$.

\subsection{Morse homology}
\label{subset:MH}
We follow the treatment of Morse homology given in \cite{Weber:2006tx}, applied to a Morse-Smale pair $(f,g)
\in \IMS(\f;N)$.
Define the moduli space by 
\begin{equation}
\label{eqn:moduli}
W_N(x,y)=W^u_{\rm loc}(x;N)\cap W^s_{\rm loc}(y;N).
\end{equation}
By Proposition \ref{prop:invariantset} and Corollary \ref{cor:MS-pairs} 
$$
S_{(f,g)} =\Inv(N, \psi_{(f,g)})=\bigcup_{x,y\in \crit(f)\cap N}W_N(x,y)\subset \Int(N),
$$
and thus $W_N(x,y) \subset \Int(N)$ for all $x,y \in \crit(f)\cap N$.
 The quotient $M_N(x,y) = W_N(x,y)/\mR$,  which can be identified with $W_N(x,y)\cap  f^{-1}(a)$, with $a\in\bigl(f(y),f(x)\bigr)$.

The `full' moduli spaces are given by $W(x,y) = W^u(x) \cap W^s(x)$. A connected component of $W(x,y)$ which intersects $W_N(x,y)$ is completely contained in $W_N(x,y)$. Since $\psi_{(f,g)}$ is Morse-Smale on $N$ 
the sets $M_N(x,y)$ are smooth submanifolds (without boundary) of dimension $\dim M_N(x,y) = \ind_{ f}(x)-\ind_{ f}(y)-1$, where $\ind_f$ is the Morse index of $x$, cf. \cite{Weber:2006tx}. The submanifolds $M_N(x,y)$ can be compactified by adjoining broken flow lines.  

If $\ind_{f}(x) = \ind_{f}(y) + 1$, then $M_N(x,y)$ is a zero-dimensional manifold without boundary, and since $\psi_{(f,g)}$ is Morse-Smale and $N$ is compact, the set $M_N(x,y)$ is a finite point set, cf. \cite{Weber:2006tx}. If $\ind_{f}(x) = \ind_{ f}(y) + 2$, then $M_N(x,y)$ is a disjoint union of copies of $\mS^{1}$ or $(0,1)$. 

The non-compactness in the components diffeomorphic to $(0,1)$ can be described as follows, see also Figure\ \ref{fig:brokenorbit}. Identify $M_N(x,y)$ with $W_N(x,y) \cap f^{-1}(a)$ (as before) and let $u_k \in M_N(x,y)$ be a sequence without any convergent subsequences. Then, there exist a critical $z\in \crit(f) \cap N$, with $f(y) <f(z)<f(x)$, reals $a^1,a^2$, with $f(z)<a^1<f(x)$ and
$f(y) <a^2<f(z)$, and times $t^1_k$ and $t^2_k$, such that
$f(\psi_{(f,g)}(t^1_k,u_k)) = a^1$ and $f(\psi_{(f,g)}(t^2_k,u_k)) = a^2$, such that
$$
\psi_{(f,g)}(t^1_k,u_k)\to v \in W_N(x,z)\cap f^{-1}(a^1), \quad\hbox{and}
$$  
$$
\psi_{(f,g)}(t^2_k,u_k)\to w \in W_N(z,y)\cap f^{-1}(a^2).
$$ 
We say that the non-compactness is
 represented by concatenations of two trajectories $x\to z$ and $z \to y$ with $\ind_f(z)=\ind_{ f}(x)-1$. 
 We write that $u_k \to (v,w)$ --- geometric convergence ---, and $(v,w)$ is a broken trajectory.
The following proposition adjusts Theorem 3.9 in \cite{Weber:2006tx}.

\begin{figure}
\def\svgwidth{.7\textwidth}
\hspace{3cm}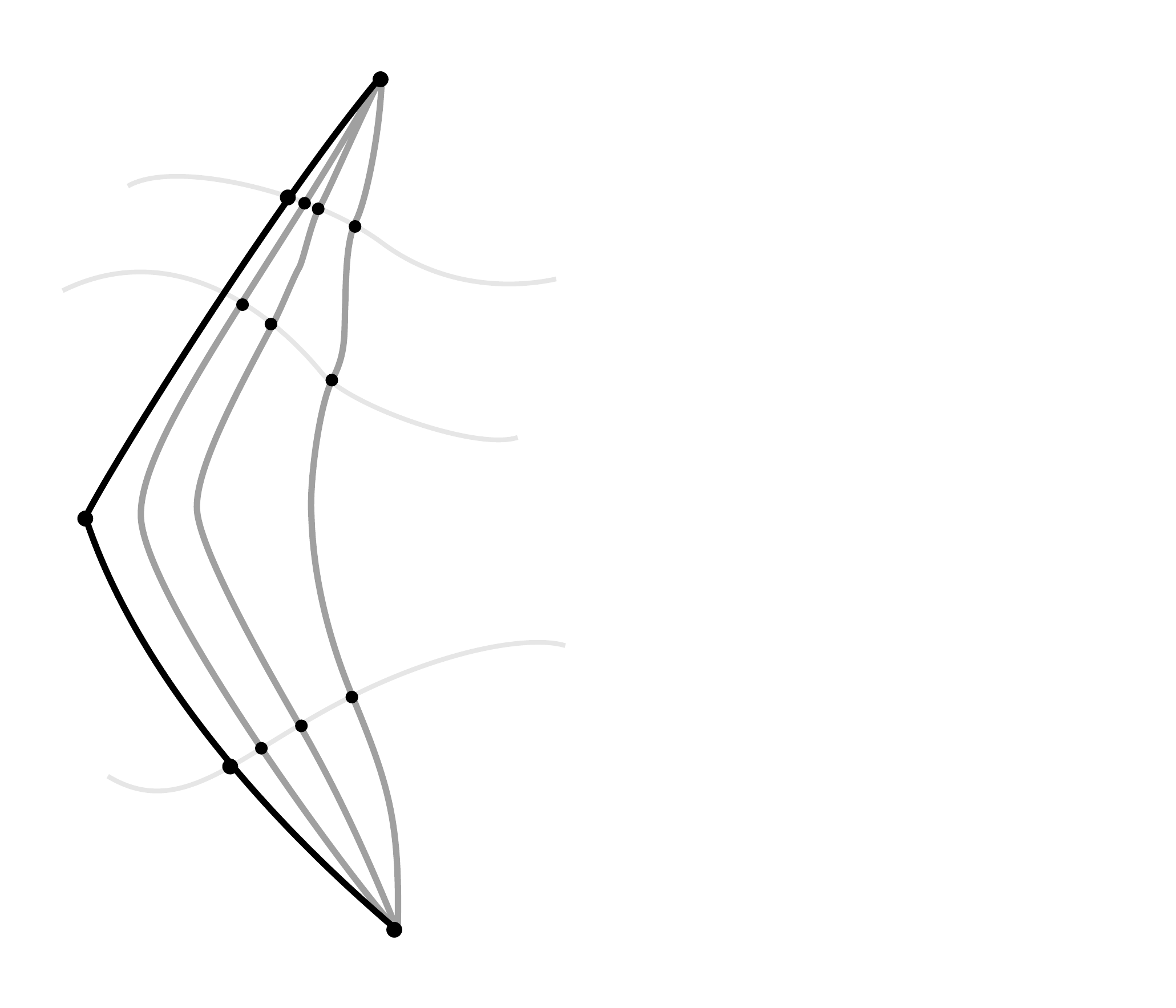
\caption{The sequence $u_k$ converges to the broken orbit $(v,w)$ as it approaches the end of the moduli space.}
\label{fig:brokenorbit}
\end{figure}
 \begin{lemma}[Restricted transitivity]
\label{prop:trans}
Suppose $x,y,z\in \crit(f) \cap N$ with $ind_{f}(x)=ind_{f}(z)+1$ and $\ind_f(z)=\ind_{ f}(y)+1$. 
Then there exist the (restricted) gluing embedding
$$
\#_\rho : \mR^+ \times M_N(x,z) \times M_N(z,y) \to M_N(x,y), \quad (\rho,v,w)\mapsto v\#_\rho w,
$$
such that $v\#_\rho w \to (v,w)$ in the geometric sense. Moreover, no sequences in $M_N(x,y)\setminus v\#_{\mR^+} w$
geometrically converge to $(v,w)$.
\end{lemma}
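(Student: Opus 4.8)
The plan is to reduce this "restricted" gluing statement to the classical gluing theorem of Morse homology (Theorem 3.9 in \cite{Weber:2006tx}) for the full moduli spaces, and then to check that the construction produces trajectories that stay inside $N$, so that it actually lands in the restricted moduli spaces $M_N(\cdot,\cdot)$. First I would recall that, because $(f,g)$ is Morse-Smale on $N$ and $S_{(f,g)}\subset\Int(N)$, a connected component of a full moduli space $W(x',y')$ that meets $W_N(x',y')$ is entirely contained in $W_N(x',y')$; in particular for the index-one situations at hand, $M_N(x,z)$ and $M_N(z,y)$ are unions of connected components of $M(x,z)$ and $M(z,y)$, and similarly $M_N(x,y)$ is a union of components of $M(x,y)$. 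Thus the abstract gluing embedding $\#_\rho:\mR^+\times M(x,z)\times M(z,y)\to M(x,y)$ from the standard theory, which exists because $\psi_{(f,g)}$ is Morse-Smale on a neighborhood of the (compact) relevant trajectories, restricts to a map on $\mR^+\times M_N(x,z)\times M_N(z,y)$.

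The key step is to verify that, for $\rho$ large, the glued trajectory $v\#_\rho w$ — which is $C^0_{\mathrm{loc}}$-close, after reparametrization, to the concatenation of $v$ and $w$ — has image in $N$, so that it lies in $W_N(x,y)$ rather than merely in $W(x,y)$. Here I would use that $N$ is an isolating neighborhood for $\psi_{(f,g)}$ with $S_{(f,g)}\subset\Int(N)$: the concatenated broken orbit $(v,w)$ together with its endpoints $x,z,y$ is a compact subset of $\Int(N)$, so it has a compact neighborhood $K$ with $(v,w)\subset\Int(K)\subset K\subset\Int(N)$. Since the unbroken pre-glued trajectories converge geometrically to $(v,w)$, for $\rho$ sufficiently large the image of $v\#_\rho w$ is contained in $K$, hence in $\Int(N)$; this shows $v\#_\rho w\in W_N(x,y)$ and gives the geometric convergence $v\#_\rho w\to(v,w)$ as part of the restricted statement. (One must also shrink the domain of $\#_\rho$ so that $\rho$ ranges only over a half-line $(\rho_0,\infty)$, or reparametrize $\rho_0$ to $0$; this is the usual harmless normalization.)

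For the injectivity/surjectivity-onto-a-neighborhood clause — "no sequences in $M_N(x,y)\setminus v\#_{\mR^+}w$ geometrically converge to $(v,w)$" — I would again pull this back to the full moduli space: the standard gluing theorem asserts that $\#_\rho$ parametrizes exactly one end of $M(x,y)$ near the broken orbit $(v,w)$, i.e.\ any sequence $u_k\in M(x,y)$ with $u_k\to(v,w)$ eventually lies in the image $v\#_{\mR^+}w$. Since $M_N(x,y)\subset M(x,y)$, the same exhaustion applies verbatim, which gives the claim.

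The main obstacle I anticipate is purely bookkeeping rather than conceptual: making the compatibility between the "local" moduli spaces $W_N(x,y)=W^u_{\mathrm{loc}}(x;N)\cap W^s_{\mathrm{loc}}(y;N)$ and the "global" ones $W(x,y)=W^u(x)\cap W^s(y)$ precise enough that the off-the-shelf gluing theorem of \cite{Weber:2006tx} (stated for closed manifolds, or for the full moduli spaces) can be invoked without re-proving it. Concretely, one needs that the gluing parameter $\rho$ can be chosen uniformly large over the (finite, by compactness of $N$) relevant pairs of trajectories, and that the geometric-convergence neighborhood $K\subset\Int(N)$ can be taken compact — both of which follow from compactness of $N$ and of the broken trajectories, but which should be stated carefully. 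Once the containment-in-$N$ is established, everything else is a direct restriction of the classical result.
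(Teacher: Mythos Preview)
Your proposal is correct and follows essentially the same route as the paper: invoke the standard gluing embedding into the full moduli space $M(x,y)$, then use geometric convergence together with the fact that the broken orbit lies in $\Int(N)$ to conclude that $v\#_\rho w\in M_N(x,y)$ for $\rho$ large. The paper's proof is in fact terser than yours---it does not spell out the compact neighborhood $K$ or address the uniqueness clause separately---so your version is, if anything, a more careful rendition of the same argument.
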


\begin{proof}
The flow $\psi_{(f,g)}$ is well-defined on $M$. In Morse homology we have  
 the restricted gluing embedding
 $$
 \#_\rho : \mR^+ \times M_N(x,z) \times M_N(z,y) \to M(x,y),
 $$
which maps a triple $(\rho,v,w)$ to an element $v\#_\rho w \in M(x,y)$, cf. \cite{Banyaga:2009ta}, \cite{Schwarz:1993wg}, \cite{Weber:2006tx}. The geometric convergence $v\#_\rho w \to (v,w)$,   as $\rho \to \infty$, implies that $\gamma_\rho = \psi_{(f,g)}(\mR, v\#_\rho w)$
gets arbitrary close to the images of $\psi_{(f,g)}(\mR, v)$ and $\psi_{(f,g)}(\mR, w)$.  Since $\psi_{(f,g)}(\mR, v) \subset \Int(N)$ and $\psi_{(f,g)}(\mR, w)\subset \Int(N)$
we obtain orbits $\gamma_\rho \subset \Int(N)$ for $\rho$ sufficiently large. This yields the embedding into $M_N(x,z)$.
\end{proof}

 Following the construction of Morse homology in \cite{Weber:2006tx} (see also \cite{Floer:1989tk}, \cite{Schwarz:1993wg}) we choose orientations of the unstable manifolds $W^u_{\rm loc}(x;N)$, for all $x\in \crit(f)\cap N$, and denote this choice by $\co$. Define chain groups and boundary operators
$$
C_{k}(f,N) = \bigoplus_{x\in \crit(f)\cap N\atop \ind_{f}(x)=k} \mZ \langle x\rangle,
$$
$$
\partial_{k}(f, g,N,\co): C_{k}(f,N) \to C_{k-1}(f,N),
$$
where
$$
\partial_{k}(f, g,N,\co) \langle x\rangle = \sum_{y\in \crit(f)\cap N\atop \ind_{f}(y)=k-1} n_N(x,y) \langle y\rangle,
$$
and $n_N(x,y)$ is the oriented intersection number of $W^{u}_{\rm loc}(x;N) \cap f^{-1}(a)$ and $W^{s}_{\rm loc}(y;N) \cap f^{-1}(a)$ with respect to $\co$.\footnote{Over $\mZ_{2}$ the intersection number is given by the number of elements in $ M_N(x,y)$ modulo 2.}

The isolation of $S_{(f,g)}$ in $N$ has strong implications for the boundary operator $\partial(f,g,N,\co)$.

\begin{lemma}
\label{thm:dsquared}
$\partial^2(f,g,N,\co)=0$.
\end{lemma}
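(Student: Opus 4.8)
The plan is to mimic the standard proof that $\partial^2 = 0$ in Morse homology, but carefully using the isolation hypothesis to ensure all the relevant moduli spaces stay inside $N$, so that the usual compactness and gluing arguments go through verbatim. Concretely, fix $x, y \in \crit(f) \cap N$ with $\ind_f(x) = \ind_f(y) + 2$. The coefficient of $\langle y \rangle$ in $\partial^2(f,g,N,\co)\langle x\rangle$ is $\sum_{z} n_N(x,z)\, n_N(z,y)$, where the sum is over $z \in \crit(f) \cap N$ with $\ind_f(z) = \ind_f(y)+1$. I want to show this sum vanishes by exhibiting it as (up to sign) the count of the boundary points of the compactified one-dimensional moduli space $\overline{M_N(x,y)}$.

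The key steps, in order. First, invoke Lemma~\ref{prop:invariantset} and Corollary~\ref{cor:MS-pairs} to recall that $W_N(x,y) \subset \Int(N)$ for all pairs, and that the $M_N(x,y)$ are smooth manifolds of dimension $\ind_f(x) - \ind_f(y) - 1$; in particular $M_N(x,y)$ is a one-dimensional manifold and each $M_N(x,z)$, $M_N(z,y)$ is a finite set of points. Second, establish that $M_N(x,y)$ has a compactification $\overline{M_N(x,y)}$ which is a compact one-manifold with boundary, the boundary being $\bigsqcup_z M_N(x,z) \times M_N(z,y)$; here the noncompactness description preceding Lemma~\ref{prop:trans} shows every divergent sequence in $M_N(x,y)$ geometrically converges to a broken trajectory $(v,w)$ with $v \in M_N(x,z)$, $w \in M_N(z,y)$ for some $z$ of index $\ind_f(x)-1$ (this uses that $\gamma_z \subset N$ is forced, since the limit points lie in $S_{(f,g)} \subset \Int(N)$ by the Lemma~\ref{lem:inv2} argument), and Lemma~\ref{prop:trans} (restricted transitivity/gluing) shows conversely that every such $(v,w)$ is the limit of exactly one end of $M_N(x,y)$. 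Third, apply the classification of compact one-manifolds with boundary: the total number of boundary points is even, and more precisely, once orientations $\co$ are fixed, the signed count of boundary points is zero. Fourth, translate the signed boundary count into the algebraic statement: the sign attached to the boundary point $(v,w) \in M_N(x,z)\times M_N(z,y)$ is the product of the intersection signs, so summing over all boundary points gives $\sum_z n_N(x,z)\, n_N(z,y) = 0$, which is exactly the coefficient of $\langle y\rangle$ in $\partial^2\langle x\rangle$. Since $x,y$ were arbitrary, $\partial^2(f,g,N,\co) = 0$.

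The main obstacle is Step~2: proving that $\overline{M_N(x,y)}$ is genuinely a compact manifold \emph{with boundary} — i.e., that the gluing map $\#_\rho$ from Lemma~\ref{prop:trans} provides a collar neighborhood (a half-open interval $[0,\epsilon) \hookrightarrow \overline{M_N(x,y)}$) at each broken trajectory, and that there are no other sources of noncompactness. The delicate point specific to this setting, as opposed to the closed-manifold case, is that a priori a connecting orbit between critical points in $N$ could wander outside $N$; this is precisely what Lemma~\ref{prop:invariantset} (via Lemma~\ref{lem:inv2}) rules out, and what Lemma~\ref{prop:trans} was engineered to handle on the gluing side — so the work is really in citing these correctly and observing that the standard transversality/exponential-decay estimates of \cite{Weber:2006tx} are local and hence unaffected. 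I would also note that the compactness of $N$ is what upgrades the finiteness of the index-difference-one moduli spaces, so the sums defining $\partial$ are finite and the bookkeeping is legitimate. The orientation/sign bookkeeping in Step~4 is routine and I would simply refer to the standard treatments (\cite{Banyaga:2009ta}, \cite{Schwarz:1993wg}, \cite{Weber:2006tx}) rather than reproduce it.
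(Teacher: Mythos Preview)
Your proposal is correct and follows essentially the same approach as the paper's proof: invoke the compactness description of the ends of $M_N(x,y)$ together with the restricted gluing of Lemma~\ref{prop:trans} to see that broken trajectories in $N$ correspond bijectively to ends of the one-dimensional components, then appeal to the standard signed counting argument (as in \cite{Weber:2006tx}) to conclude $\partial^2=0$. The paper's proof is a terse two-sentence version of exactly this outline; your write-up simply unpacks the steps more explicitly.
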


\begin{proof}
Restricted transitivity in Proposition \ref{prop:trans} implies that all ends of the components
$M_N(x,y)$ that are diffeomorphic to $(0,1)$, are different and lie in $N$. The gluing map also implies that all broken trajectories in $N$ occur as unique ends of components of $M_N(x,y)$.
 By the same counting procedure as in \cite{Weber:2006tx} we obtain $\partial^2( f,g,N,\co)=0$.
\end{proof}

\begin{definition}
\label{defn:MCF}
{\em
A quadruple $\cQ = (f, g,N,\co)$ is called  a \emph{Morse-Conley-Floer quadruple} for $(S,\phi)$ if
\begin{enumerate}
\item [(i)] $N$ is an isolating neighborhood for $S$, i.e. $S = \Inv(N,\phi)$;
\item [(ii)]  $(f,g)\in \IMS(\f;N)$, for a Lyapunov function $\f\in \lyap(S,\phi)$;
\item [(iii)] $\co$ is choice of orientations of the unstable manifolds of the critical points of $f$ in $N$.
\end{enumerate}
}
\end{definition}

By Proposition \ref{thm:dsquared}  $\bigl( C_*(f,N), \partial_*(f,g,N,\co)\bigr)$ is a chain complex and the
the Morse homology of the quadruple $\cQ=(f, g,N,\co)$ is now defined as
$$
\HM_{*}(\cQ) := H_{*}(C_{*}(f,N),\partial_{*}(f,g,N,\co)).
$$

\subsection{Independence of Lyapunov functions and perturbations}
\label{subsec:indep}
The next step is to show that $\HM_{*}(\cQ)$ is independent of the choice of Lyapunov function,
metric and orientation, as well as the choice of perturbation $f$. In Section\ \ref{subsec:inverse} we show that the homology is also independent of the isolating neighborhood $N$.

\begin{theorem}
\label{prop:cont1}\label{thm:cont1}
Let $\cQ^\alpha=(f^\alpha,g^\alpha,N^\alpha,\co^\alpha)$ and $\cQ^\beta=(f^\beta,g^\beta,N^\beta,\co^\beta)$ be Morse-Conley-Floer quadruples for $(S,\phi)$, with the same isolating neighborhood $N=N^\alpha=N^\beta$. Then, there exist canonical isomorphisms
$$
\Phi^{\beta\alpha}_*:\HM_{*}(\cQ^\alpha) \xrightarrow{\cong} \HM_{*}(\cQ^\beta).
$$

\end{theorem}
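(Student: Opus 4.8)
The plan is to follow the standard continuation argument from Morse homology, adapted to respect isolation with respect to the fixed isolating neighborhood $N$. The key point is that all the constructions must take place with isolated homotopies, so that no dynamics escapes $N$ during the interpolation.

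First I would reduce to the case where orientations and metrics are fixed and only the function varies, since the independence of orientations is a purely algebraic matter (a change of orientation induces an obvious chain isomorphism) and the metric can be absorbed into the definition of $\IMS(\f;N) = \bigcup_e \IMS(\f,e;N)$. So the heart of the matter is the following: given Lyapunov functions $\f^\alpha, \f^\beta \in \lyap(S,\phi)$ and Morse-Smale pairs $(f^\alpha,g^\alpha) \in \IMS(\f^\alpha;N)$, $(f^\beta,g^\beta) \in \IMS(\f^\beta;N)$, I want to produce a chain homotopy class of chain maps. The convexity of $\lyap(S,\phi)$ from Lemma~\ref{prop:convex} lets me connect $\f^\alpha$ and $\f^\beta$ by the straight-line path $\f^\lambda = (1-\lambda)\f^\alpha + \lambda \f^\beta$, which stays in $\lyap(S,\phi)$ with isolating neighborhood $N^\alpha \cap N^\beta$; but since we have already fixed $N = N^\alpha = N^\beta$, and $\crit(\f^\lambda) \cap N \subset S \subset \Int(N)$ by Lemma~\ref{lem:inv2}, the path $(\f^\lambda, g^\lambda)$ (for any path of metrics) is an isolated homotopy of gradient flows on $N$ in the sense of Definition~\ref{defn:isolMS}. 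Concatenating with the isolated homotopies $(f^\alpha,g^\alpha) \sim (\f^\alpha, g^\alpha)$ and $(f^\beta, g^\beta) \sim (\f^\beta, g^\beta)$ from Corollary~\ref{cor:MS-pairs} / Proposition~\ref{prop:morse}, I obtain a single isolated homotopy connecting the two Morse-Smale pairs through gradient flows all isolating $N$.

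Next I would construct the continuation map along such an isolated homotopy. After a generic perturbation of the homotopy $(f^\lambda, g^\lambda)_{\lambda\in[0,1]}$ (rel endpoints, staying in the open set of isolating flows by Proposition~\ref{prop:isolation}), the parametrized moduli spaces of $\lambda$-dependent gradient trajectories from $x \in \crit(f^\alpha)\cap N$ to $y \in \crit(f^\beta)\cap N$ are smooth manifolds of dimension $\ind_{f^\alpha}(x) - \ind_{f^\beta}(y)$; isolation guarantees these trajectories stay in $\Int(N)$, so the zero-dimensional ones form a finite set and counting them with signs (relative to $\co^\alpha, \co^\beta$) defines $\Phi^{\beta\alpha}_k: C_k(f^\alpha, N) \to C_k(f^\beta, N)$. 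That this is a chain map, and that different choices of isolated homotopy give chain-homotopic maps, follows by the usual analysis of the one-dimensional parametrized moduli spaces and their ends (broken parametrized trajectories, which by the restricted transitivity / gluing argument of Lemma~\ref{prop:trans} again stay in $N$) — this is where one invokes that the relevant compactness and gluing statements are purely local to $N$, exactly as in Lemma~\ref{thm:dsquared}. Composition of continuation maps along concatenated homotopies equals the continuation map along the concatenation up to chain homotopy, and the continuation along a constant homotopy is the identity; hence on homology $\Phi^{\beta\alpha}_* := (\Phi^{\beta\alpha}_k)_*$ is well-defined, independent of the chosen isolated homotopy, satisfies $\Phi^{\alpha\alpha}_* = \mathrm{id}$ and $\Phi^{\gamma\beta}_* \circ \Phi^{\beta\alpha}_* = \Phi^{\gamma\alpha}_*$, and is therefore an isomorphism with inverse $\Phi^{\alpha\beta}_*$.

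The main obstacle I expect is not the algebra but the analytic input for the parametrized moduli spaces: transversality of the $\lambda$-dependent stable/unstable manifolds, and the compactness/gluing for broken parametrized trajectories, all while maintaining that everything stays inside $\Int(N)$ so that the relevant moduli spaces remain compact (equivalently, finite in dimension zero). The isolation is what makes this work — without it, trajectories could leak out of $N$ and the count would not be well-defined — so the crucial step is verifying that a generic perturbation of the isolated homotopy can be chosen to still preserve isolation, which is precisely the openness statement of Proposition~\ref{prop:isolation} combined with a Sard-Smale argument in the space of such perturbations. Once the parametrized transversality is in hand, the ends-counting arguments are identical to those already used in the proofs of Lemma~\ref{thm:dsquared} and Lemma~\ref{prop:trans}, and I would simply cite \cite{Weber:2006tx}, \cite{Schwarz:1993wg} for the corresponding statements in the standard (non-isolated) setting and indicate the localization to $N$.
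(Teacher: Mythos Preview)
Your overall strategy is sound and the construction of the isolated homotopy via the convex combination of Lyapunov functions (Lemma~\ref{prop:convex}) concatenated with the defining homotopies from $\IMS$ is exactly what the paper does. Where you diverge is in the mechanism for producing the continuation map: you count solutions of the \emph{non-autonomous} equation $\dot x = -\nabla_{g_{\lambda(t)}} f_{\lambda(t)}(x)$ directly, whereas the paper suspends to an \emph{autonomous} gradient system on $M\times\mS^1$ via $F(x,\mu)=f_{\omega(\mu)}(x)+r[1+\cos(\pi\mu)]$ with product metric $G^\times$, and then reads off $\Phi^{\beta\alpha}$ as the off-diagonal block of the boundary operator $\Delta$ of that higher-dimensional Morse complex. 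The advantage of the paper's route is economy: once $(F,G)$ is Morse--Smale on the isolating neighborhood $C\subset N\times\mS^1$, everything (chain-map property, homotopy invariance, functoriality) follows from the already-proven Lemma~\ref{thm:dsquared} applied one dimension up, with no new parametrized transversality or gluing needed. Isolation of $C$ is obtained by a clean perturbation argument: at $\kappa=0$ the flow is a product and $N\times\mS^1$ is isolating by hypothesis, so it remains isolating for small $\kappa>0$ by Proposition~\ref{prop:isolation}. Your route is the more commonly presented one in the Floer literature, but be aware of one point you pass over: isolation of $N$ for each \emph{fixed} $\lambda$ does not by itself force the non-autonomous continuation trajectories to remain in $N$; you need an additional argument (e.g.\ slow variation of $\lambda$, equivalently small $\kappa$ in the suspended picture, or working inside an isolating block with transverse boundary) to get the a~priori containment that makes the zero-dimensional moduli spaces compact. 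The paper's suspension trick is precisely what packages this step cleanly.
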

The proof essentially follows the continuation theorem for Floer homology as given in \cite{Floer:1989tk}.
We sketch a proof based on the same arguments in \cite{Floer:1989wv} and \cite{Schwarz:1993wg} and the dynamical systems approach in \cite{Weber:2006tx}. The idea of the proof of this proposition is to construct higher dimensional systems which contain the complexes generated by both quadruples. The fundamental relation $\partial^2=0$ on the higher dimensional systems induces a map between the Morse homologies, which is then used to construct an isomorphism between the homologies of both quadruples.

Fix a metric $g$ on $M$. By assumption there are Lyapunov functions $\fal,\fbet \in \lyap(S,\phi)$ defined on the same isolating block $N$ and Morse-Smale pairs $(f^\alpha,g^\alpha) \in \IMS(\fal;N)$ and $(f^\beta,g^\beta)\in \IMS(\fbet;N)$. This implies that there exists a homotopy $(f^\alpha_\lambda,g^\alpha_\lambda)$ between $(f^\alpha,g^\alpha)$ and $(f_\phi^\alpha,g)$, and similarly there exists a homotopy  $(f^\beta_\lambda,g^\beta_\lambda)$ between $(f^\beta,g^\beta)$ and $(f_\phi^\beta,g)$. By Proposition \ref{prop:convex} the functions $$f_{\phi,\, \lambda}= (1-\lambda) \fal + \lambda \fbet,$$  are Lyapunov functions for $(S,\phi)$. Since $f_{\phi,\lambda} \in \lyap(S,\phi)$, for all $\lambda \in [0,1]$, it follows by Lemma \ref{lem:inv2} that $\Inv(N,\psi_{(f_{\phi,\lambda},g)}) =  \crit(f_{\phi,\lambda})\cap N \subset S$, where $\psi_{(f_{\phi,\lambda},g)}$ is the gradient flow 
of $(f_{\phi,\lambda},g)$, and thus $N$ is an isolating neighborhood for all $\psi_{(f_{\phi,\lambda},g)}$. We concatenate the homotopies and define 
$$
f_\lambda = \begin{cases}
f^\alpha_{3\lambda}      & \text{ for}~\lambda \in [0,\frac{1}{3}], \\
f_{\phi,3\lambda -1}      & \text{ for}~\lambda \in [\frac{1}{3},\frac{2}{3}],\\
f^\beta_{3-3\lambda}    & \text{ for}~\lambda \in [\frac{2}{3},1],
\end{cases}
$$
which is a piecewise smooth homotopy between $f^\alpha$ and $f^\beta$.
Similarly define
$$
g_\lambda = \begin{cases}
g^\alpha_{3\lambda}      & \text{ for}~\lambda \in [0,\frac{1}{3}], \\
g      & \text{ for}~\lambda \in [\frac{1}{3},\frac{2}{3}],\\
g^\beta_{3-3\lambda}    & \text{ for}~\lambda \in [\frac{2}{3},1],
\end{cases}
$$
which is a piecewise smooth homotopy of metric between $g^\alpha$ and $g^\beta$.
Both homotopies can be reparameterized to smooth homotopies via a diffeomorphism $\lambda \mapsto \alpha(\lambda)$,
with $\alpha'(\lambda)\ge 0$ and $\alpha^{(k)}(\lambda) =0$ for $\lambda=\frac{1}{3},\frac{2}{3}$ and for all  $k\ge 1$.
We denote the reparameterized homotopy again by $(f_\lambda,g_\lambda)$. The homotopies fit into the diagram
$$
\xymatrix{(f^\alpha,g^\alpha)\ar[r]^{(f_\lambda,g_\lambda)}\ar[d]_{(f_\lambda^\alpha,g_\lambda^\alpha)}&(f^\beta,g^\beta)\ar[d]^{(f_\lambda^\beta,g_\lambda^\beta)}\\
(f_\phi^\alpha,g)\ar[r]_{(f_{\phi,\lambda},g)}&(f_\phi^\beta,g)}
$$
The  flow of $(f_\lambda,g_\lambda)$ is denoted by $\psi_{(f_\lambda,g_\lambda)}$. By assumption and construction 
$\Inv(N,\psi_{(f_\lambda,g_\lambda)}) \subset \Int(N)$ for all $\lambda \in [0,1]$ and therefore $(f^\alpha,g^\alpha)
\in \IMS(f^\beta,g^\beta;N)$ and vice versa.
Note that $ f_{\lambda}$ is not necessarily Morse for each $\lambda \in (0,1)$.
At $\lambda=0,1$  the flows $\psi_{(f_\lambda,g_\lambda)}$ are Morse-Smale flows on $N$ by assumption.

Let $r>0$ and $0<\delta < \frac{1}{4}$, and define the function $F:M\times \mS^1\rightarrow \mR$
\begin{equation}
\label{eq:continuation}
F(x,\mu) =  f_{\omega(\mu)}(x) + r \bigl[ 1+\cos(\pi \mu)\bigr],
\end{equation}
where $\omega: \mR \to [0,1]$ is a smooth, even,  2-periodic function with the following properties
on the interval $[-1,1]$:
 $\omega(\mu)=0$, for $-\delta<\mu<\delta$, $\omega(\mu)=1$, for $-1 < \delta <-1+\delta$ and $1-\delta < \mu <1$, and
$\omega'(\mu) <0$ on $(-1+\delta,-\delta)$ and $\omega'(\mu)>0$ on $(\delta,1-\delta)$. By the identifying $\mS^{1}$ with $\mR/2\mZ$, the function $\omega$ descents to a function on $\mS^1$, which is denoted by the same symbol. We consider the product metric $G^\times$ on $M\times \mS^{1}$ defined by
$$
G_{(x,\mu)}^\times  = (g_{\omega(\mu)})_x \oplus \frac{1}{\kappa} d\mu^2,\quad \kappa >0.
$$
For the proof of the theorem it would suffice to take $\kappa=1$. However, in the proof of Proposition\ \ref{prop:continuation} we do need the parameter $\kappa$. In order to use the current proof ad verbatim, we introduce the parameter.
\begin{lemma}
If
$$
r>\frac{\max_{\mu\in\mS^1,x\in N}|\omega'(\mu) \left[\partial_\lambda f_\lambda(x)\bigr|_{\lambda=\omega(\mu)}\right]|}{\pi\sin(\pi\delta)},
$$
then for each critical point $(x,\mu)\in\crit(F)\cap (N\times\mS^1)$, either $\mu=0$ and $\ind_{F}(x,0)=\ind_{f^\alpha}(x)+1$, or $\mu=1$ and $\ind_{F}(x,1)=\ind_{ f^\beta}(x)$.
In particular, $F$ is a Morse function on $N\times \mS^1$ and
\begin{equation}
\label{eq:split}
C_k(F,N\times \mS^1)\cong C_{k-1}( f^\alpha,N)\oplus C_k( f^\beta,N).
\end{equation}
\end{lemma}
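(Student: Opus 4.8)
The plan is to pin down $\crit(F)\cap(N\times\mS^1)$ exactly, compute the Hessian of $F$ at each critical point, and read off the splitting of chain groups from the resulting index formulas. First observe that $F$ is well defined on $\mS^1=\mR/2\mZ$, since $\omega$ and $\cos(\pi\,\cdot\,)$ are $2$-periodic, and smooth, since the (reparametrized) homotopy $\lambda\mapsto f_\lambda$ and the cut-off $\omega$ are smooth. Its differential is
\[
\partial_xF(x,\mu)=df_{\omega(\mu)}(x),\qquad \partial_\mu F(x,\mu)=\omega'(\mu)\,\partial_\lambda f_\lambda(x)|_{\lambda=\omega(\mu)}-r\pi\sin(\pi\mu),
\]
so $(x,\mu)\in N\times\mS^1$ is critical precisely when both expressions vanish.

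The first step is to show that a critical point must have $\mu\in\{0,1\}$. Work on the fundamental domain $\mu\in[-1,1]$, with $1\equiv-1$, and split into cases according to the defining properties of $\omega$. On the intervals where $\omega$ is locally constant --- namely $(-\delta,\delta)$ and the two intervals adjacent to $\mu=\pm1$ --- one has $\omega'(\mu)=0$, so $\partial_\mu F=0$ reduces to $\sin(\pi\mu)=0$, forcing $\mu\in\{0,1\}$. On the remaining intervals $(\delta,1-\delta)$ and $(-1+\delta,-\delta)$ one has $|\sin(\pi\mu)|\ge\sin(\pi\delta)$, so for every $x\in N$,
\[
|\partial_\mu F(x,\mu)|\ge r\pi\sin(\pi\delta)-\max_{\mu\in\mS^1,\,x\in N}|\omega'(\mu)\,\partial_\lambda f_\lambda(x)|_{\lambda=\omega(\mu)}|>0
\]
by the hypothesis on $r$; hence there are no critical points there. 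Thus $\crit(F)\cap(N\times\mS^1)$ lies entirely in the slices $\mu=0$ and $\mu=1$.

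Next I would treat the two slices together with their Hessians. Near $\mu=0$ we have $\omega\equiv0$, so $F(x,\mu)=f^\alpha(x)+r[1+\cos(\pi\mu)]$ there; thus $(x,0)$ is critical iff $x\in\crit(f^\alpha)\cap N$, and since $\omega'(0)=\omega''(0)=0$ the Hessian is block diagonal, $\mathrm{Hess}\,F(x,0)=\mathrm{Hess}\,f^\alpha(x)\oplus(-r\pi^2)$. As $f^\alpha$ is Morse on $N$ this is nondegenerate, and the extra negative eigenvalue yields $\ind_F(x,0)=\ind_{f^\alpha}(x)+1$. Symmetrically, near $\mu=1$ we have $\omega\equiv1$, so $F(x,\mu)=f^\beta(x)+r[1+\cos(\pi\mu)]$, so $(x,1)$ is critical iff $x\in\crit(f^\beta)\cap N$, with $\mathrm{Hess}\,F(x,1)=\mathrm{Hess}\,f^\beta(x)\oplus(-r\pi^2\cos\pi)=\mathrm{Hess}\,f^\beta(x)\oplus(r\pi^2)$, nondegenerate with $\ind_F(x,1)=\ind_{f^\beta}(x)$. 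Since $\crit(f^\alpha)\cap N\subset\Int N$ and $\crit(f^\beta)\cap N\subset\Int N$ (both functions being Morse on $N$) and $\mS^1$ has no boundary, every critical point of $F$ lies in $\Int(N)\times\mS^1=\Int(N\times\mS^1)$, so $F$ is Morse on $N\times\mS^1$. Grading by index, $C_k(F,N\times\mS^1)$ is freely generated by the $(x,0)$ with $\ind_{f^\alpha}(x)=k-1$ together with the $(x,1)$ with $\ind_{f^\beta}(x)=k$, which is exactly the asserted isomorphism $C_k(F,N\times\mS^1)\cong C_{k-1}(f^\alpha,N)\oplus C_k(f^\beta,N)$.

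Most of the above is a routine computation. The one point requiring care is the case analysis on $\omega$ in the first step: one must notice that on the flat portions of $\omega$ the $\mu$-equation collapses to $\sin(\pi\mu)=0$, while the stated lower bound on $r$ is calibrated precisely to exclude critical points on the sloped portions $(\delta,1-\delta)$ and $(-1+\delta,-\delta)$, uniformly over $x\in N$. A subsidiary observation is that $\omega$ being locally constant near $\mu=0$ and $\mu=1$ forces $\omega'$ and $\omega''$ to vanish there, which is what makes the Hessian block diagonal and the index shifts clean.
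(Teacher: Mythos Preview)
Your proof is correct and follows essentially the same approach as the paper: compute the $\mu$-component of $dF$ (equivalently, of $\nabla_{G^\times}F$), use the bound on $r$ to rule out critical points away from $\mu\in\{0,1\}$, and then observe the extra unstable/stable direction in $\partial_\mu$ at those slices. Your version is simply more explicit, writing out the Hessian block decomposition where the paper just names the extra direction.
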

\begin{proof}
The $G$-gradient vector field of $F$ on $M\times \mS^{1}$ is given by
\begin{align*}
\nabla_{G^\times} F &(x,\mu) =\\
&\nabla_{g_{\omega(\mu)}}  f_{\omega(\mu)}(x) +\kappa\Bigl( \omega'(\mu) \bigl[ \partial_\lambda f_\lambda(x)\bigl|_{\lambda=\omega(\mu)} \bigr]- r \pi \sin(\pi\mu)\Bigr){\partial_\mu}.
\end{align*}
by the choice on $r$ the second term is only zero on $B\times \mS^1$ if $\mu=0,1$. Then the critical points at $\mu=0$ correspond to critical points of $f^\alpha$, but there is one extra unstable direction, corresponding to ${\partial_\mu}$. At $\mu=1$ the critical points come from critical points of $f^\beta$, and there is one extra stable direction, which does not contribute to the Morse index.
\end{proof}

This outlines the construction of the higher dimensional system. From now on we assume $r$ specified as above. 
\begin{figure}
\def\svgwidth{.7\textwidth}
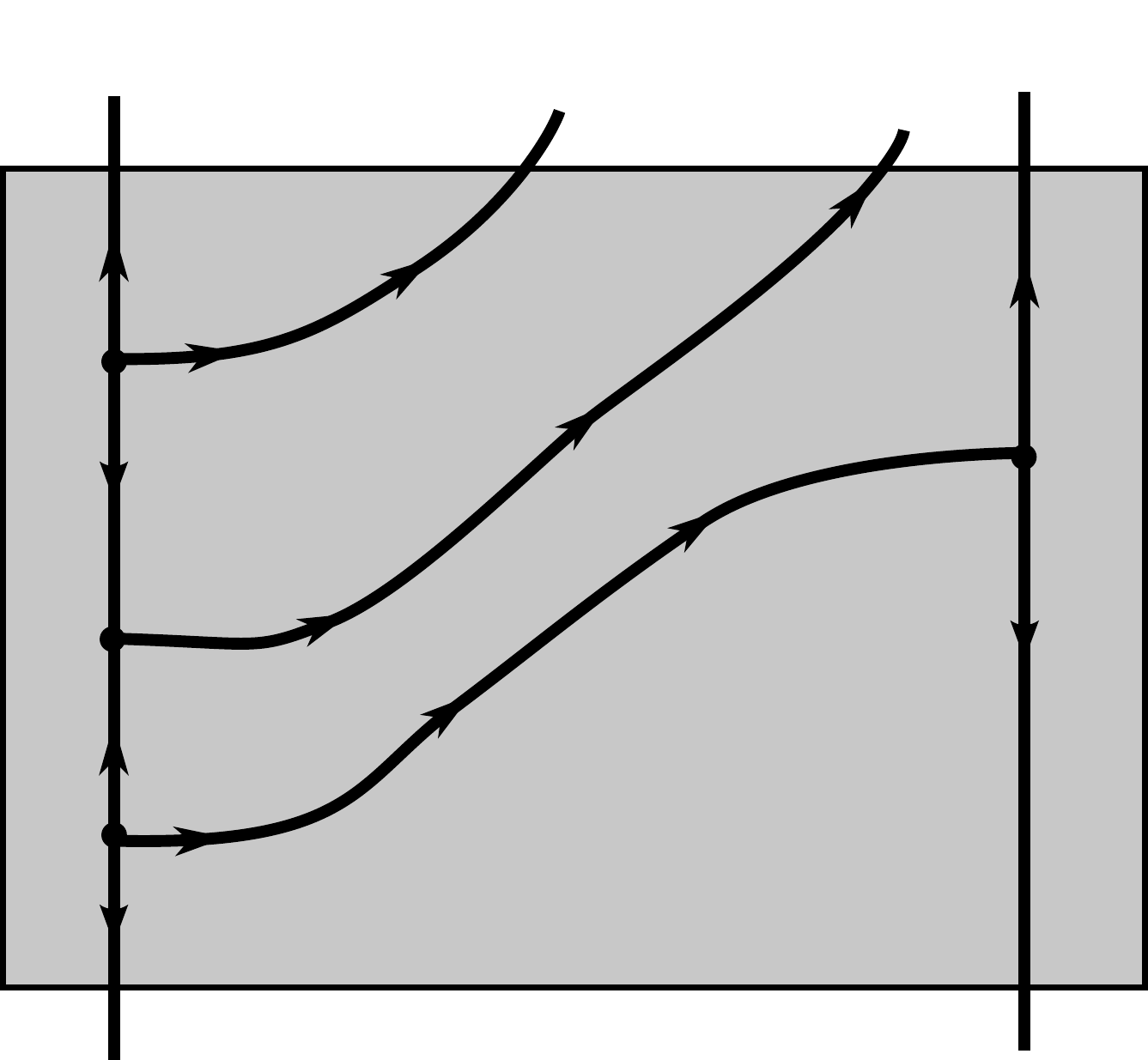
\caption{The continuation map $\Phi_*^{\beta\alpha}:\HM_*(\cQ^\alpha)\rightarrow\HM_*(\cQ^\beta)$ is induced by a higher dimensional gradient system with Morse function of Equation\ \bref{eq:continuation}. }
\label{fig:continuation}
\end{figure}

\begin{proof}[Proof of Theorem \ref{prop:cont1}]
Consider the negative gradient flow 
$\Psi^\times_\kappa: \mR \times M \times \mS^1 \to M\times \mS^{1}$ is generated by the vector field $-\nabla_{G^\times} F$, cf.\ Figure\ \ref{fig:continuation}. 
For $\kappa =0$ the flow $\Psi^\times_0$ is a product flow and $N\times \mS^1$ is an isolating neighborhood for 
$\{ (x,\lambda)~|~x\in S_{(f_\lambda,g_\lambda)}\}$. Since isolation is preserved under small perturbations,
$N\times \mS^1$ is also an isolating neighborhood for $\Psi^\times_\kappa$, for all $\kappa>0$ sufficiently small.
Note that $C=N\times[\frac{-\delta}{2},1+\frac{\delta}{2}]$ is also an isolating neighborhood of the gradient flow $\Psi^\times_\kappa$. The flow $\Psi^\times$ is not Morse-Smale on $C$, but since $F$ is Morse, a small perturbation $G$ of the product  metric $G^\times$, makes $(F,G)$ a Morse-Smale pair and therefore the associated negative $G$-gradient flow $\Psi$ is Morse-Smale, without destroying the isolation property of $C$. The connections at $\mu=0$ and $\mu=1$ are stable, because these are Morse-Smale for $\kappa=0$. 
This implies that we can still denote the boundary operators at $\mu=0,1$ by $\partial^\alpha=\partial(f^\alpha,g^\alpha,N,\co^\alpha)$
and $\partial^\beta=\partial(f^\beta,g^\beta,N,\co^\beta)$ respectively, since the perturbed metric at the end point do not change the connections and their oriented intersection numbers.
We now choose the following orientation 
$$\cO=\bigl(\bigl[\partial\mu\bigr]\oplus \co_0\bigr)\cup\co_1,$$ 
for the unstable manifolds of the critical points of $F$ in $C$.

By Theorem \ref{thm:dsquared} the Morse homology of the Morse-Conley-Floer  quadruple $(F,G,C,\cO)$ is well-defined. With regards to the splitting of Equation \bref{eq:split} the boundary operator $\Delta_k=\partial_k(F, G,C,\cO)$ is given by
$$
\Delta_k = \left(\begin{array}{cc}-\partial_{k-1}^\alpha & 0 \\ \Phi^{\beta\alpha}_{k-1} & \partial_k^\beta\end{array}\right).
$$
The map $\Phi^{\beta\alpha}$ counts gradient flow lines from $(x,0)$ to $(y,1)$ with $\ind_F(x,0)=\ind_F(y,1)+1$. Theorem~\ref{thm:dsquared} gives $\Delta_{k-1} \circ \Delta_{k} =0$, from which we derive that $\Phi_*^{\beta\alpha}$ is a chain map
$$
\Phi^{\beta\alpha}_{k-2}\circ \partial^\alpha_{k-1}=\partial^\beta_{k-1}\circ \Phi^{\beta\alpha}_{k-1},
$$
hence $\Phi^{\beta\alpha}_*$ descents to a map in homology $\Phi^{\beta\alpha}_*:\HM_*( \cQ^\alpha)\rightarrow \HM_*(\cQ^\beta)$, which we denote by the same symbol. 
The arguments in~\cite{Weber:2006tx} can be used now to show that the maps $\Psi_*^{\beta\alpha}$ only
depend on the `end points' $\cQ^\alpha$ and $\cQ^\beta$, and not on the particular homotopy.
It is obvious that if $\cQ^\alpha=\cQ^\beta$ this map is the identity, both on the chain and homology level. By looking at a higher dimensional system, we establish that, if $\cQ^\gamma=( f^\gamma,g^\gamma,N^\gamma,\co^\gamma)$, with $N^\gamma=N$ is a third Morse-Conley-Floer quadruple, then the induced maps satisfy the functorial relation
$$
\Phi_*^{\gamma\alpha}=\Phi_*^{\gamma\beta}\circ\Phi_*^{\beta\alpha},
$$
on homology. The argument is identical to \cite{Weber:2006tx}, which we therefore omit. Taking $( f^\gamma,g^\gamma,N,\co^\gamma)=( f^\alpha,g^\alpha,N,\co^\alpha)$, along with the observation that $\Phi^{\alpha\alpha}=\id$, shows that the maps are isomorphisms.
\end{proof}

Let $\cQ^\alpha,\cQ^\beta,\cQ^\gamma$ be Morse-Conley-Floer quadruples for $(S,\phi)$ with $N=N^\alpha=N^\beta=N^\gamma$. This gives the above defined isomorphisms:
\begin{align*}
\Phi_*^{\beta\alpha}: \HM_*(\cQ^\alpha) &\to \HM_*( \cQ^\beta),\\
\Phi_*^{\gamma\beta}: \HM_*(\cQ^\beta) &\to \HM_*( \cQ^\gamma),\\
\Phi_*^{\gamma\alpha}: \HM_*(\cQ^\alpha) &\to \HM_*(\cQ^\gamma).
\end{align*}
The proof of the previous proposition shows that the following functorial relation holds. 
\begin{theorem}
\label{prop:cont3}
It holds that
$\Phi_*^{\gamma\alpha} = \Phi_*^{\gamma\beta}\circ\Phi_*^{\beta\alpha}$.
\end{theorem}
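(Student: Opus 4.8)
**Proof proposal for Theorem \ref{prop:cont3} ($\Phi_*^{\gamma\alpha} = \Phi_*^{\gamma\beta}\circ\Phi_*^{\beta\alpha}$).**

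The plan is to mimic the higher-dimensional-system construction used in the proof of Theorem \ref{prop:cont1}, but now with three quadruples rather than two. Recall that for the two-quadruple case one builds a function $F$ on $M\times\mathbb{S}^1$ whose critical set in $N\times\mathbb{S}^1$ lives only over $\mu=0$ and $\mu=1$, and whose Morse differential $\Delta$, with respect to the splitting $C_k(F,N\times\mathbb{S}^1)\cong C_{k-1}(f^\alpha,N)\oplus C_k(f^\beta,N)$, is lower-triangular with off-diagonal entry the chain map inducing $\Phi^{\beta\alpha}_*$. The identity $\Delta^2=0$ then encodes that $\Phi^{\beta\alpha}$ is a chain map. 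To obtain the composition law I would run the analogous argument one dimension higher: concatenate a homotopy from $(f^\alpha,g^\alpha)$ to $(f^\beta,g^\beta)$ with one from $(f^\beta,g^\beta)$ to $(f^\gamma,g^\gamma)$ into a single piecewise-smooth homotopy, reparametrize it to be smooth with all derivatives vanishing at the breakpoints (exactly as in the proof of Theorem \ref{prop:cont1}), and encode it on $M\times\mathbb{S}^1$ by a function $F$ with \emph{three} critical levels over $\mu=0,\tfrac12,1$ — concretely using a potential on $\mathbb{S}^1$ with three wells of strictly decreasing critical value, so that the extra $\partial_\mu$ direction contributes one unstable direction at $\mu=0$, and $0$ extra at $\mu=\tfrac12$ and $\mu=1$ for the purposes of computing the induced maps (one must check the index bookkeeping carefully: the point is that critical points over $\mu=0$ sit above those over $\mu=\tfrac12$, which sit above those over $\mu=1$).

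The key steps, in order: (1) choose the $\mathbb{S}^1$-potential and the weight $r$ large enough that, by the same estimate as in the lemma preceding the proof of Theorem \ref{prop:cont1}, $\crit(F)\cap(N\times\mathbb{S}^1)$ lies entirely over the three wells, giving a splitting $C_k(F,N\times\mathbb{S}^1)\cong C_{k-2}(f^\alpha,N)\oplus C_{k-1}(f^\beta,N)\oplus C_k(f^\gamma,N)$; (2) perturb the product metric slightly to a Morse--Smale metric $G$ without destroying isolation of $N\times\mathbb{S}^1$ (and of the sub-block analogous to $C$), noting that the connections over the three wells remain the Morse--Smale connections of $f^\alpha$, $f^\beta$, $f^\gamma$ because those are already Morse--Smale for the product flow; (3) observe that the Morse differential $\Delta$ of $(F,G)$ is block lower-triangular of the form
$$
\Delta_k = \begin{pmatrix} \partial^\alpha_{k-2} & 0 & 0\\ \Phi^{\beta\alpha}_{k-2} & -\partial^\beta_{k-1} & 0 \\ A_{k} & \Phi^{\gamma\beta}_{k-1} & \partial^\gamma_k\end{pmatrix}
$$
(up to signs to be fixed by the chosen orientation $\mathcal O$), where the $\alpha\to\beta$ and $\beta\to\gamma$ off-diagonal blocks count only short connections confined near one breakpoint and hence agree with the chain maps from Theorem \ref{prop:cont1}, by the gluing/compactness arguments of \cite{Weber:2006tx}; (4) expand $\Delta^2=0$, whose $(\alpha,\gamma)$-corner entry reads, schematically, $\partial^\gamma A + \Phi^{\gamma\beta}\Phi^{\beta\alpha} + (\pm)A\,\partial^\alpha = 0$, i.e.\ $A$ is a chain homotopy between the composite chain map $\Phi^{\gamma\beta}\Phi^{\beta\alpha}$ and the zero map modulo the chain map inducing $\Phi^{\gamma\alpha}$; (5) identify the block $A$: by the same homotopy-independence statement invoked in the proof of Theorem \ref{prop:cont1} (the arguments of \cite{Weber:2006tx} showing the induced maps depend only on the endpoint quadruples), the long connections from $\mu=0$ to $\mu=1$ are precisely those counted by the two-well system for the pair $(\cQ^\alpha,\cQ^\gamma)$ up to chain homotopy, so $A$ induces $\Phi^{\gamma\alpha}_*$ in homology; (6) pass to homology in the $(\alpha,\gamma)$-corner identity of $\Delta^2=0$: the chain-homotopy term dies and one is left with $\Phi^{\gamma\alpha}_* = \Phi^{\gamma\beta}_*\circ\Phi^{\beta\alpha}_*$.

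The main obstacle, as in the analogous Morse/Floer arguments, is step (5) together with the sign bookkeeping in step (3)–(4): one must verify that the flow lines from the bottom well to the top well of the three-well system are, after the generic metric perturbation, in bijection (preserving oriented counts up to chain homotopy) with those of the two-well system defining $\Phi^{\gamma\alpha}$ — this is where the geometric convergence and gluing analysis is genuinely needed, and where one must be careful that the intermediate well at $\mu=\tfrac12$ does not spuriously capture or create connections (it does not, because broken trajectories through $\mu=\tfrac12$ are exactly accounted for by the $\Phi^{\gamma\beta}\Phi^{\beta\alpha}$ term, which is the whole point). Since every ingredient — the index estimate, the isolation-under-perturbation, the triangular form of $\Delta$, homotopy independence — is already available from Theorem \ref{prop:cont1} and \cite{Weber:2006tx}, I would present this as a sketch parallel to that proof and refer to \cite{Weber:2006tx} for the standard analytic details, exactly as the paper does for the two-quadruple functoriality claim stated inside the proof of Theorem \ref{prop:cont1}.
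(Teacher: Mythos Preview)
Your setup has a genuine gap. You propose encoding all three quadruples on $M\times\mathbb{S}^1$ via a potential with three critical levels at $\mu=0,\tfrac12,1$ and claim a splitting $C_k(F,N\times\mathbb{S}^1)\cong C_{k-2}(f^\alpha,N)\oplus C_{k-1}(f^\beta,N)\oplus C_k(f^\gamma,N)$. That splitting would require the $\mathbb{S}^1$-potential to have Morse critical points of indices $2,1,0$, which is impossible on a one-dimensional circle. More seriously, even if you move to a two-dimensional parameter space so that index shifts $2,1,0$ become available, a three-block lower-triangular $\Delta$ still does not produce the composition law: the $(3,1)$-entry of $\Delta^{2}=0$ reads (with your signs) $A\,\partial^\alpha+\Phi^{\gamma\beta}\Phi^{\beta\alpha}+\partial^\gamma A=0$, which says precisely that $\Phi^{\gamma\beta}\circ\Phi^{\beta\alpha}$ is \emph{null}-homotopic, hence zero in homology --- certainly not what you want. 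The off-diagonal block $A$ maps $C_{j}(f^\alpha)\to C_{j+1}(f^\gamma)$, so it has degree $+1$ and cannot induce the degree-preserving map $\Phi^{\gamma\alpha}_*$ as you assert in step~(5).

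What is missing is a fourth block that carries the direct chain map $\Phi^{\gamma\alpha}$, so that the $(\alpha,\gamma)$-corner of $\Delta^2=0$ reads $\Phi^{\gamma\beta}\Phi^{\beta\alpha}-\Phi^{\gamma\alpha}=\pm(\partial^\gamma A+A\,\partial^\alpha)$. This is exactly why the paper speaks of a ``higher dimensional system'': one adds one more parameter dimension beyond the $M\times\mathbb{S}^1$ used for Theorem~\ref{prop:cont1} (equivalently, a homotopy of homotopies over a square, as in \cite{Weber:2006tx}). On, say, $M\times\mathbb{S}^1\times\mathbb{S}^1$ with the four corners carrying $(\alpha,\beta,\gamma,\gamma)$ at parameter indices $(2,1,1,0)$, the two index-drop-one edges into the bottom corner contribute $\Phi^{\gamma\beta}\circ\Phi^{\beta\alpha}$ (via $\beta$) and $\mathrm{id}\circ\Phi^{\gamma\alpha}$ (via the direct homotopy), and the rigid diagonal trajectories give the chain homotopy $A$. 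Your instincts about the form of the argument are right, but the bookkeeping forces the extra dimension; the three-well construction on a single $\mathbb{S}^1$ cannot carry both competing chain maps simultaneously.
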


\subsection{Independence of isolating blocks}
\label{subsec:inverse}
In section we show that Morse homology of a Morse-Conley-Floer quadruple is independent of the choice of isolating block $N$ for the isolated invariant set $S$.
\begin{theorem}
\label{prop:cont2}\label{thm:cont2}
Let $\cQ^\alpha = (f^\alpha,g^\alpha,N^\alpha,\co^\alpha)$ and $\cQ^\beta = (f^\beta,g^\beta,N^\beta,\co^\beta)$ be Morse-Conley-Floer quadruples for $(S,\phi)$. Then, there are canonical isomorphisms
$$
\Phi^{\beta\alpha}_*: \HM_*(\cQ^\alpha) \xrightarrow{\cong} \HM_*(\cQ^\beta).
$$
\end{theorem}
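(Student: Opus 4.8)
The plan is to reduce Theorem~\ref{thm:cont2} to the case of a common isolating neighborhood, which is already settled by Theorem~\ref{thm:cont1}. The starting observation is that isolating neighborhoods of $S$ form a directed set under reverse inclusion: if $N^\alpha$ and $N^\beta$ are isolating neighborhoods for $S$, then $N:=N^\alpha\cap N^\beta$ is compact, contains $S$ in its interior (since $S\subset\Int(N^\alpha)\cap\Int(N^\beta)=\Int(N)$), and satisfies $\Inv(N,\phi)\subseteq\Inv(N^\alpha,\phi)=S\subseteq N$, hence $\Inv(N,\phi)=S$; so $N$ is again an isolating neighborhood, and by Proposition~\ref{prop:WY} it contains a smooth isolating block $B$ with $\Inv(B,\phi)=S$. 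I would then prove the theorem by comparing $\cQ^\alpha$ and $\cQ^\beta$ through Morse--Conley--Floer quadruples supported on this single block $B$.

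The key technical device is a \emph{restriction lemma}: if $\cQ=(f,g,\tilde N,\co)$ is a Morse--Conley--Floer quadruple for $(S,\phi)$ and $B\subset\tilde N$ is an isolating block for $S$ with the property that the maximal invariant set $\Inv(\tilde N,\psi_{(f,g)})$ already lies in $\Int(B)$, then $(f,g,B,\co)$ is again a Morse--Conley--Floer quadruple and in fact $C_*(f,B)=C_*(f,\tilde N)$ and $\partial_*(f,g,B,\co)=\partial_*(f,g,\tilde N,\co)$, so that $\HM_*(f,g,B,\co)=\HM_*(\cQ)$. The point is that $\crit(f)\cap B=\crit(f)\cap\tilde N$ and that the moduli spaces are unchanged, $W_B(x,y)=W_{\tilde N}(x,y)$ for all critical points $x,y$: by Lemma~\ref{prop:invariantset} every connecting orbit lies inside $\Inv(\tilde N,\psi_{(f,g)})\subset\Int(B)$, so a trajectory stays in $\tilde N$ for all time precisely when it stays in $B$ for all time. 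Membership $(f,g)\in\IMS(\f;B)$ follows from $(f,g)\in\IMS(\f;\tilde N)$ once the isolated homotopy is taken $C^1$-small (Proposition~\ref{prop:morse}) and one uses that isolation is an open condition (Proposition~\ref{prop:isolation}), applied to the flow $\psi_{(\f,g)}$ whose $\tilde N$-invariant set is $\crit(\f)\cap\tilde N\subset S\subset\Int(B)$ by Lemma~\ref{lem:inv2}; the same openness guarantees that a sufficiently fine Morse perturbation $f$ indeed has $\Inv(\tilde N,\psi_{(f,g)})\subset\Int(B)$. Combining this with Theorem~\ref{thm:cont1}, I may first replace $\cQ^\alpha$ by a quadruple $\hat\cQ^\alpha=(\hat f^\alpha,\hat g^\alpha,N^\alpha,\hat\co^\alpha)$ in which $\hat f^\alpha$ is a fine enough perturbation of its Lyapunov function and the witnessing isolated homotopy is $C^1$-small, conclude $\Inv(N^\alpha,\psi_{(\hat f^\alpha,\hat g^\alpha)})\subset\Int(B)$, and then restrict to $B$ by the lemma; symmetrically for $\cQ^\beta$.

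With both quadruples reduced to the \emph{same} isolating block $B$, Theorem~\ref{thm:cont1} supplies a canonical isomorphism between $\HM_*(\hat f^\alpha,\hat g^\alpha,B,\hat\co^\alpha)$ and $\HM_*(\hat f^\beta,\hat g^\beta,B,\hat\co^\beta)$, and composing this with the isomorphisms produced above yields
$$
\Phi^{\beta\alpha}_*:\HM_*(\cQ^\alpha)\xrightarrow{\cong}\HM_*(\cQ^\beta).
$$
The remaining, and principal, difficulty is to show that this composite is \emph{canonical}, i.e.\ independent of the auxiliary choices (the block $B$, the perturbations, the homotopies) and transitive, $\Phi^{\gamma\alpha}_*=\Phi^{\gamma\beta}_*\circ\Phi^{\beta\alpha}_*$. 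For this I would observe that the restriction-lemma identifications intertwine the continuation maps: the higher-dimensional gradient system of Equation~\bref{eq:continuation} defining $\Phi^{\beta\alpha}_*$ has the same critical points and the same finitely many isolated connecting trajectories whether set up over $N\times\mS^1$ or over $B\times\mS^1$, for $B$ any isolating block through which all the relevant invariant sets pass; hence the naturality and functoriality established for a fixed neighborhood (Theorem~\ref{prop:cont3}) propagate to varying neighborhoods. Any two reductions of $\cQ^\alpha$ and $\cQ^\beta$ can then be compared inside a common block $B''\subset B\cap B'$, and transitivity for three quadruples $\cQ^\alpha,\cQ^\beta,\cQ^\gamma$ follows by reducing all three to a single isolating block and invoking Theorem~\ref{prop:cont3}. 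The underlying principle is that the Morse--Smale--Witten complex, its boundary operator, and the continuation maps depend only on the behaviour of the gradient flow arbitrarily close to $S$, and are therefore insensitive to which isolating block is used to localize the construction.
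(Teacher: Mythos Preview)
Your proposal is correct and follows essentially the same strategy as the paper: intersect $N^\alpha$ and $N^\beta$ to get a common isolating neighborhood $N$, pass via Theorem~\ref{thm:cont1} to intermediate quadruples whose gradient-flow invariant sets already lie in $\Int(N)$ (your ``restriction lemma'' is exactly the paper's equalities $\HM_*(f^\alpha_\epsilon,g^\alpha_\epsilon,N^\alpha,\co^\alpha)=\HM_*(f^\alpha_\epsilon,g^\alpha_\epsilon,N,\co^\alpha)$), then apply Theorem~\ref{thm:cont1} on $N$ and verify independence of the auxiliary choices by comparing inside a common refinement. The only cosmetic difference is that you pass further to an isolating block $B\subset N$ whereas the paper works directly with $N$; neither is necessary for the other.
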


\begin{proof}
By assumption there exist Lyapunov functions $\fal,\fbet$   on $N^\alpha,N^\beta$
and Morse-Smale pairs $(f^\alpha,g^\alpha) \in \IMS(\fal;N^\alpha)$ and $(f^\beta,g^\beta)\in \IMS(\fbet;N^\beta)$ be Morse-Smale pairs. We cannot directly compare the complexes as in the previous proposition, however $N=N^\alpha\cap N^\beta$ is an isolating neighborhood. The restrictions of $\fal,\fbet$ to $N$ are Lyapunov functions. 
Since $(f^\alpha,g^\alpha) \in \IMS(\fal;N^\alpha)$, there exists a smooth function $f^\alpha_\epsilon: M \to \mR$, and Riemannian metric $g^\alpha_\epsilon$,
with $\Inv(\psi_{(f^\alpha_\epsilon,g^\alpha_\epsilon)},N^\alpha) \subset \Int(N)$,
such that 
$$
(f^\alpha,g^\alpha) \in  \IMS(f^\alpha_\epsilon;N^\alpha), \quad {\rm and}\quad (f^\alpha_\epsilon,g^\alpha_\epsilon) \in \IMS(\fal;N).
$$ 
Similarly, there exists a smooth function 
$f^\beta_\epsilon: M \to \mR$, and a Riemannian metric $g^\beta_\epsilon$, with  $\Inv(\psi_{(f^\beta_\epsilon,g^\beta_\epsilon)},N^\beta) \subset \Int(N)$,
such that 
$$
(f^\beta,g^\beta) \in \IMS(f^\beta_\epsilon;N^\beta)\quad {\rm and}\quad  (f^\beta_\epsilon,g^\beta_\epsilon) \in \IMS(\fbet;N).
$$ 
This yields the following diagram of isomorphisms
$$
\xymatrix@C=.25cm{\HM_*(\cQ^\alpha)\ar[r]^-{\Psi_*^\alpha}\ar[d]^{\Phi_*^{\beta\alpha}}&\HM_*(f^\alpha_\epsilon,g^\alpha_\epsilon,N^\alpha,\co^\alpha)\ar[r]^-{=}&\HM_*( f^\alpha_\epsilon,g^\alpha_\epsilon,N,\co^\alpha)\ar[d]^{\Phi_*^{\beta\alpha,\epsilon}}\\
\HM_*(\cQ^\beta)\ar[r]^-{\Psi_*^\beta}&\HM_*( f^\beta_\epsilon,g^\beta_\epsilon,N^\beta,\co^\beta)\ar[r]^-{=}&\HM_*(f^\beta_\epsilon,g^\beta_\epsilon,N,\co^\beta),
}
$$
where $\Psi_*^\alpha$, $\Psi_*^\beta$ and $\Phi_*^{\beta\alpha,\epsilon}$ are given by Theorem \ref{thm:cont1} and $\Phi_*^{\beta\alpha} = (\Psi_*^\beta)^{-1} \circ \Phi_*^{\beta\alpha,\epsilon}\circ \Psi_*^\alpha$.
The equalities in the above diagram follow from the isolation properties of $f^\alpha_\epsilon$ and $f^\beta_\epsilon$ with respect to $N$!

Suppose we use different functions ${f_\epsilon^\alpha}', {f_\epsilon^\beta}'$ and metric ${g^\alpha_\epsilon}',{g^\beta_\epsilon}'$
as above. Then, we obtain isomorphisms ${\Psi_*^\alpha}'$, ${\Psi_*^\beta}'$ and $\Phi_*^{\beta\alpha,\epsilon'}$, which yields the isomorphisms $\Phi_*^{\beta\alpha'} = ({\Psi_*^\beta}')^{-1} \circ \Phi_*^{\beta\alpha,\epsilon'}\circ {\Psi_*^\alpha}'$
between $\HM_*(\cQ^\alpha)$ and $\HM_*(\cQ^\beta)$.
Define
$$
\Phi^{\alpha'\alpha}_* = {\Psi_*^\alpha}'\circ (\Psi_*^\alpha)^{-1} : \HM_*(f^\alpha_\epsilon,g^\alpha_\epsilon,N,\co^\alpha)
\to \HM_*({f^\alpha_\epsilon}',{g^\alpha_\epsilon}',N,\co^\alpha),
$$
and similarly
$$
\Phi^{\beta'\beta}_* = {\Psi_*^\beta}'\circ (\Psi_*^\beta)^{-1} : \HM_*(f^\beta_\epsilon,g^\beta_\epsilon,N,\co^\beta)
\to \HM_*({f^\beta_\epsilon}',{g^\beta_\epsilon}',N,\co^\beta),
$$
where we use the equalities $\HM_*(f^\alpha_\epsilon,g^\alpha_\epsilon,N^\alpha,\co^\alpha) = \HM_*(f^\alpha_\epsilon,g^\alpha_\epsilon,N,\co^\alpha)$ and 
$\HM_*(f^\beta_\epsilon,g^\beta_\epsilon,N^\beta,\co^\beta) = \HM_*(f^\beta_\epsilon,g^\beta_\epsilon,N,\co^\beta)$.
From the Morse homologies of Morse-Conley-Floer quadruples with $N$ fixed and Theorem \ref{thm:cont1}
we obtain the  following commutative diagram:
$$
\xymatrix@C=.25cm{\HM_*(f^\alpha_\epsilon,g^\alpha_\epsilon,N,\co^\alpha)\ar[d]^{\Phi^{\alpha'\alpha}_*}\ar[r]^-{\Phi_*^{\beta\alpha,\epsilon}}&\HM_*( f^\beta_\epsilon,g^\beta_\epsilon,N,\co^\beta)\ar[d]^{\Phi^{\beta'\beta}_*}\\
\HM_*({f^\alpha_\epsilon}',{g^\alpha_\epsilon}',N,\co^\alpha)\ar[r]^-{\Phi^{\beta\alpha,\epsilon'}_*}&\HM_*({f^\beta_\epsilon}',{g^\beta_\epsilon}',N,\co^\beta).
}
$$
Combining the isomorphisms yields
$$
\Phi_*^{\beta\alpha'} = (\Psi_*^{\beta})^{-1} \circ (\Phi_*^{\beta'\beta})^{-1}\circ
\Psi_*^{\beta'\beta} \circ \Phi_*^{\beta\alpha,\epsilon}\circ (\Psi_*^{\alpha'\alpha})^{-1}
\circ \Psi_*^{\alpha'\alpha}\circ \Psi_*^\alpha = \Phi_*^{\beta\alpha},
$$
which proves the independence of isolating blocks.
\end{proof}

The isomorphisms in Theorem \ref{prop:cont2} satisfy the composition law.
If we consider Morse-Smale quadruples $\cQ^\alpha,\cQ^\beta,\cQ^\gamma$, we find isomorphisms:
 \begin{align*}
\Phi_*^{\beta\alpha}: \HM_*(\cQ^\alpha) &\to \HM_*(\cQ^\beta),\\
\Phi_*^{\gamma\beta}: \HM_*(\cQ^\beta) &\to \HM_*( \cQ^\gamma),\\
\Phi_*^{\gamma\alpha}: \HM_*( \cQ^\alpha) &\to \HM_*(\cQ^\gamma).
\end{align*}
\begin{theorem}
\label{prop:cont4}
The composition law holds:
$\Phi^{\gamma\alpha}_* = \Phi^{\gamma\beta}_*\circ\Phi^{\beta\alpha}_*$.
\end{theorem}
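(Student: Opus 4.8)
The plan is to reduce Theorem \ref{prop:cont4} to the composition law already established for quadruples with a fixed isolating neighborhood, namely Theorem \ref{prop:cont3}. Recall from the proof of Theorem \ref{prop:cont2} that for each pair of quadruples the isomorphism $\Phi_*^{\beta\alpha}$ is constructed in a three-step factorization through the common isolating neighborhood $N^{\alpha\beta} = N^\alpha\cap N^\beta$: one first moves from $\cQ^\alpha$ to a quadruple $(f^\alpha_\epsilon,g^\alpha_\epsilon,N^\alpha,\co^\alpha)$ via $\Psi_*^\alpha$, identifies this with the same data over $N^{\alpha\beta}$ (an equality, using the isolation of $f^\alpha_\epsilon$ with respect to $N^{\alpha\beta}$), applies the fixed-neighborhood continuation $\Phi^{\beta\alpha,\epsilon}_*$ over $N^{\alpha\beta}$, and then descends back to $\cQ^\beta$ via $(\Psi_*^\beta)^{-1}$.

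The key steps, in order, are as follows. First I would pick a \emph{single} isolating neighborhood that works for all three quadruples simultaneously: $N := N^\alpha\cap N^\beta\cap N^\gamma$ is still an isolating neighborhood for $S$, and the restrictions of the Lyapunov functions $\fal, \fbet, f_\phi^\gamma$ to $N$ remain Lyapunov functions. Next, repeating the argument in the proof of Theorem \ref{prop:cont2} but with this triple intersection, I obtain for each index $\delta\in\{\alpha,\beta,\gamma\}$ a function $f^\delta_\epsilon$ and metric $g^\delta_\epsilon$ with $(f^\delta,g^\delta)\in\IMS(f^\delta_\epsilon;N^\delta)$ and $(f^\delta_\epsilon,g^\delta_\epsilon)\in\IMS(f_\phi^\delta;N)$, giving isomorphisms $\Psi_*^\delta:\HM_*(\cQ^\delta)\to\HM_*(f^\delta_\epsilon,g^\delta_\epsilon,N,\co^\delta)$ via Theorem \ref{thm:cont1}, together with the now-standard equality $\HM_*(f^\delta_\epsilon,g^\delta_\epsilon,N^\delta,\co^\delta)=\HM_*(f^\delta_\epsilon,g^\delta_\epsilon,N,\co^\delta)$. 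Crucially, since $N$ is common to all three, the independence-of-representative part of the proof of Theorem \ref{prop:cont2} shows that the resulting $\Phi_*^{\beta\alpha}$, $\Phi_*^{\gamma\beta}$, $\Phi_*^{\gamma\alpha}$ agree with the ones built over the pairwise intersections, so there is no loss of generality in computing everything over $N$.

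With all three homologies related to quadruples over the single neighborhood $N$, the composition law becomes transparent: writing $\Phi_*^{\beta\alpha} = (\Psi_*^\beta)^{-1}\circ\Phi_*^{\beta\alpha,\epsilon}\circ\Psi_*^\alpha$ and similarly for the other two, we have
$$
\Phi_*^{\gamma\beta}\circ\Phi_*^{\beta\alpha} = (\Psi_*^\gamma)^{-1}\circ\Phi_*^{\gamma\beta,\epsilon}\circ\Psi_*^\beta\circ(\Psi_*^\beta)^{-1}\circ\Phi_*^{\beta\alpha,\epsilon}\circ\Psi_*^\alpha = (\Psi_*^\gamma)^{-1}\circ\bigl(\Phi_*^{\gamma\beta,\epsilon}\circ\Phi_*^{\beta\alpha,\epsilon}\bigr)\circ\Psi_*^\alpha.
$$
Now $\Phi_*^{\gamma\beta,\epsilon}\circ\Phi_*^{\beta\alpha,\epsilon}=\Phi_*^{\gamma\alpha,\epsilon}$ by Theorem \ref{prop:cont3}, the composition law for the fixed isolating neighborhood $N$, which gives $(\Psi_*^\gamma)^{-1}\circ\Phi_*^{\gamma\alpha,\epsilon}\circ\Psi_*^\alpha = \Phi_*^{\gamma\alpha}$, as desired. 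The main obstacle is not the algebra but the bookkeeping: one must verify that the particular auxiliary data $(f^\delta_\epsilon,g^\delta_\epsilon)$ needed to define $\Phi_*^{\gamma\beta}$ (a priori chosen relative to $N^\beta\cap N^\gamma$) can be taken to coincide with, or be connected by fixed-$N$ continuation isomorphisms to, the data used for $\Phi_*^{\beta\alpha}$ — this is precisely the independence statement proved at the end of Theorem \ref{prop:cont2}, so the argument is to invoke it once for each of the three legs and then cancel the $\Psi_*^\delta$'s as above.
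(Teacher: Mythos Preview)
Your proposal is correct and follows essentially the same approach as the paper: take the triple intersection $N=N^\alpha\cap N^\beta\cap N^\gamma$, factor each $\Phi_*$ through quadruples over $N$ as in the proof of Theorem~\ref{prop:cont2}, and then apply the fixed-neighborhood composition law of Theorem~\ref{prop:cont3}. The paper's proof is a one-line sketch of exactly this argument, and you have simply fleshed out the bookkeeping (including the independence-of-representative step) that the paper leaves implicit.
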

\begin{proof}
The intersection $N=N^\alpha\cap N^\beta\cap N^\gamma$ is an isolating neighborhood. Proceeding as in the proof of the previous theorem, and using Theorem~\ref{prop:cont3} on the isolating neighborhood $N$, shows that the composition law holds. 
\end{proof}
\section{Morse-Conley-Floer homology}
\label{sec:CF-index}

Theorem \ref{prop:cont4} implies that the Morse homology $\HM_*(\cQ^\alpha)$ 
is an inverse system with respect to the canonical isomorphisms 
$$
\Phi_*^{\beta\alpha}: \HM_*(\cQ^\alpha) \xrightarrow{\cong} \HM_*(\cQ^\beta).
$$
This leads to the following definition. 
  
\begin{definition}
\label{defn:CF-index}
{\em
Let $S$ be an isolated invariant set for $\phi$. The \emph{Morse-Conley-Floer homology} of $(S,\phi)$ is defined by 
\begin{equation}
\label{eqn:CF-index2}
\HI_*(S,\phi) := \varprojlim \HM_*(\cQ)
\end{equation}
with respect to Morse-Conley-Floer quadruples $\cQ$ for $(S,\phi)$ and the associated canonical  isomorphisms.

}
\end{definition}

\begin{proposition}
Suppose $N$ is an isolating neighborhood with $S=\Inv(N,\phi)=\emptyset$, then $HI_*(S,\phi)=0$. 
\end{proposition}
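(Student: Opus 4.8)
The plan is to exhibit a single Morse-Conley-Floer quadruple for $(S,\phi)$ whose Morse complex is the zero complex, and then to propagate this vanishing across the entire inverse system by means of the canonical isomorphisms of Theorem~\ref{thm:cont2}.

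First I would use the hypothesis $S=\Inv(N,\phi)=\varnothing$ together with Proposition~\ref{prop:WY} to obtain a smooth isolating block $B\subset N$ with $\Inv(B,\phi)=\varnothing$, and then Proposition~\ref{prop:lyap1} to produce a smooth Lyapunov function $\f\in\lyap(S,\phi)$ whose Lyapunov property holds with respect to $B$. By Lemma~\ref{lem:crit1} we have $\crit(\f)\cap B\subset S=\varnothing$, so $\f$ has no critical points in $B$. Fixing any metric $g$, Lemma~\ref{lem:inv2} then gives $S_{(\f,g)}=\crit(\f)\cap B=\varnothing$, so conditions (i)--(iii) of Definition~\ref{defn:Morse-Smale} hold vacuously and $\psi_{(\f,g)}$ is Morse-Smale on $B$; the constant homotopy shows $(\f,g)\in\IMS(\f;B)$. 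Consequently $\cQ:=(\f,g,B,\co)$, with $\co$ the empty choice of orientations, is a Morse-Conley-Floer quadruple for $(S,\phi)$ in the sense of Definition~\ref{defn:MCF}. (If one prefers a genuine Morse perturbation, Corollary~\ref{cor:MS-pairs} supplies a Morse-Smale pair $(f,g)\in\IMS(\f;B)$, and by Lemma~\ref{prop:invariantset} one still has $\crit(f)\cap B=S_{(f,g)}=\varnothing$, so nothing changes.) Since $\crit(\f)\cap B=\varnothing$, every chain group $C_k(\f,B)$ vanishes, so the Morse complex is the zero complex and $\HM_*(\cQ)=0$.

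To finish, I would appeal to Theorem~\ref{thm:cont2}: every Morse-Conley-Floer quadruple $\cQ'$ for $(S,\phi)$ satisfies $\HM_*(\cQ')\cong\HM_*(\cQ)=0$, so the inverse system $\{\HM_*(\cQ')\}$ defining $\HI_*(S,\phi)$ consists entirely of trivial groups; its inverse limit is therefore trivial, that is, $\HI_*(S,\phi)=\varprojlim\HM_*(\cQ')=0$. I do not expect a genuine obstacle here; the only point worth checking is that the empty invariant set really does admit Morse-Conley-Floer quadruples, so that the vanishing statement has content, and this is exactly what Propositions~\ref{prop:WY} and~\ref{prop:lyap1} guarantee.
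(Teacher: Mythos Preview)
Your proof is correct and follows essentially the same approach as the paper: exhibit a Lyapunov function $\f$, observe via Lemma~\ref{lem:crit1} that $\crit(\f)\cap N\subset S=\varnothing$, conclude that $(\f,g)$ is vacuously Morse-Smale so that the associated chain complex is zero, and hence $\HI_*(S,\phi)=0$. You are simply more explicit than the paper about the existence of the Lyapunov function (via Propositions~\ref{prop:WY} and~\ref{prop:lyap1}) and about passing from a single vanishing $\HM_*(\cQ)$ to the vanishing of the inverse limit via Theorem~\ref{thm:cont2}.

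One small remark on your parenthetical alternative: invoking Lemma~\ref{prop:invariantset} alone does not immediately yield $\crit(f)\cap B=\varnothing$ for a Morse perturbation $f$; you would also need to use that $\f$ has no critical points on the compact set $B$, so a sufficiently $C^2$-small perturbation $f$ has none either. This is harmless since your main argument with $\f$ itself already suffices.
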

\begin{proof}
Let $\f$ be a Lyapunov function for $S$ on $N$. Since $\crit \f \cap N\subset S$ by Lemma~\ref{lem:crit1}, we have that $\f$ has no critical points in $N$ and $\f$ is automatically Morse-Smale on $N$. It follows that $HM_*(\f,g,\co,N)=0$, for any metric $g$, and therefore it follows that $HI_*(S,\phi)=0$. 
\end{proof}

Local continuation gives invariance of the Morse-Conley-Floer homology with respect to nearby flows. Consider a family of flows  $\phi_\lambda$, with $\lambda \in[0,1]$, and the denote the flow at $\lambda =0$ by $\phi^\alpha$ and
the flow at $\lambda =1$ by $\phi^\beta$.

\begin{proposition}
\label{prop:continuation}
Let $N\subset M$ be compact, and let $\phi_\lambda$ a smooth family of flows 
as described above
such that $N$ is an isolating neighborhood for $\phi_\lambda$ for all $\lambda \in[0,1]$, i.e.
$S_\lambda = \Inv(N,\phi_\lambda) \subset \Int(N)$.
 Set $S^\alpha=\Inv(N,\phi^\alpha)$ and $S^\beta=\Inv(N,\phi^\beta)$. Then there exists isomorphisms
$$
\HI_*(S^\alpha,\phi^\alpha)\cong \HI_*(S^\beta,\phi^\beta).
$$ 
\end{proposition}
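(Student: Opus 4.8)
The plan is to reduce to the continuation theorem already available for a fixed flow, Theorem~\ref{thm:cont1}, by extracting from the family $\phi_\lambda$ a single smooth homotopy of gradient data interpolating between Morse--Conley--Floer data adapted to $\phi^\alpha$ and data adapted to $\phi^\beta$, all isolated by one fixed isolating block. First I would localise in $\lambda$: for each $\lambda_0\in[0,1]$ pick, by Proposition~\ref{prop:WY}, a smooth isolating block $B_{\lambda_0}\subset N$ for $(S_{\lambda_0},\phi_{\lambda_0})$. The transversality and immediate-exit conditions of Definition~\ref{defn:block} are open in the vector field, and $S_\lambda\subset\Int(B_{\lambda_0})$ for $\lambda$ near $\lambda_0$ by continuity of isolation; hence $B_{\lambda_0}$ is an isolating block for $(S_\lambda,\phi_\lambda)$ on a neighbourhood of $\lambda_0$, with $\Inv(B_{\lambda_0},\phi_\lambda)=\Inv(N,\phi_\lambda)=S_\lambda$. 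Compactness of $[0,1]$ gives a partition $0=s_0<\dots<s_n=1$ with blocks $B_j$ isolating every $\phi_\lambda$, $\lambda\in[s_{j-1},s_j]$. Chaining the isomorphisms produced below over $j$, it suffices to treat a family $\phi_\lambda$, $\lambda\in[0,1]$, admitting one common isolating block $B$.

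Next I would manufacture a smooth family of Lyapunov functions. After a standard reparametrisation flattening all $\lambda$-derivatives at $\lambda=0,1$ (which changes neither $\phi^\alpha,\phi^\beta$ nor the isolation of $B$), extend the family by reflection to $\lambda\in\mS^1=\mR/2\mZ$ and consider the parametrised flow $\hat\phi$ on $M\times\mS^1$, $\hat\phi\bigl(t,(x,\lambda)\bigr)=\bigl(\phi_\lambda(t,x),\lambda\bigr)$. Then $B\times\mS^1$ is a smooth isolating block for $\hat\phi$: its smooth boundary faces are $D_\pm\times\mS^1$, and $\hat\phi$ is transverse to them precisely because $\phi_\lambda\pitchfork D_\pm$, the immediate-exit condition being inherited; moreover $\Inv(B\times\mS^1,\hat\phi)=\{(x,\lambda)\mid x\in S_\lambda\}$. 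Proposition~\ref{prop:lyap1} then yields a smooth $\hat\f\colon M\times\mS^1\to\mR$ whose Lyapunov property holds with respect to $B\times\mS^1$; setting $\f_\lambda:=\hat\f(\cdot,\lambda)$ gives a smooth family with $\f_\lambda\in\lyap(S_\lambda,\phi_\lambda)$, the Lyapunov property now holding with respect to $B$ itself. By Lemma~\ref{lem:inv2}, $\Inv\bigl(B,\psi_{(\f_\lambda,g)}\bigr)=\crit(\f_\lambda)\cap B\subset S_\lambda\subset\Int(B)$ for every $\lambda$ and every metric $g$.

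Now fix a metric $g$, and by Corollary~\ref{cor:MS-pairs} choose Morse--Smale pairs $(f^\alpha,g^\alpha)\in\IMS(\f_0;B)$ and $(f^\beta,g^\beta)\in\IMS(\f_1;B)$; these give Morse--Conley--Floer quadruples $\cQ^\alpha=(f^\alpha,g^\alpha,B,\co^\alpha)$ for $(S^\alpha,\phi^\alpha)$ and $\cQ^\beta=(f^\beta,g^\beta,B,\co^\beta)$ for $(S^\beta,\phi^\beta)$, so $\HM_*(\cQ^\alpha)\cong\HI_*(S^\alpha,\phi^\alpha)$ and $\HM_*(\cQ^\beta)\cong\HI_*(S^\beta,\phi^\beta)$ by Theorem~\ref{thm:cont2}. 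Concatenating the isolated homotopy from $(f^\alpha,g^\alpha)$ to $(\f_0,g)$, the homotopy $(\f_\lambda,g)$ from $(\f_0,g)$ to $(\f_1,g)$, and the isolated homotopy from $(\f_1,g)$ to $(f^\beta,g^\beta)$ — interpolating metrics where needed, which preserves isolation of $B$ by Lemma~\ref{lem:inv2} — and smoothing by a flat reparametrisation, I obtain a smooth homotopy $(f_\lambda,g_\lambda)$ between $(f^\alpha,g^\alpha)$ and $(f^\beta,g^\beta)$ with $\Inv\bigl(B,\psi_{(f_\lambda,g_\lambda)}\bigr)\subset\Int(B)$ for all $\lambda$ and Morse--Smale endpoints. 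This is exactly the input of the construction in the proof of Theorem~\ref{thm:cont1}: one forms $F(x,\mu)=f_{\omega(\mu)}(x)+r[1+\cos(\pi\mu)]$ on $M\times\mS^1$ with the product metric with parameter $\kappa$; for $\kappa=0$ the set $B\times\mS^1$ isolates the associated product invariant set, hence for $\kappa>0$ small as well by Proposition~\ref{prop:isolation}, and for $r$ large $F$ is Morse on $B\times\mS^1$ with $C_k(F,B\times\mS^1)\cong C_{k-1}(f^\alpha,B)\oplus C_k(f^\beta,B)$. A Morse--Smale perturbation of the metric preserving isolation yields a quadruple whose boundary operator, by Lemma~\ref{thm:dsquared}, is triangular with off-diagonal block a chain map $\Phi^{\beta\alpha}\colon C_*(f^\alpha,B)\to C_*(f^\beta,B)$, inducing $\Phi^{\beta\alpha}_*\colon\HM_*(\cQ^\alpha)\to\HM_*(\cQ^\beta)$; as in Theorem~\ref{thm:cont1} this map depends only on the endpoints, satisfies $\Phi^{\alpha\alpha}_*=\id$ and the composition law, hence is an isomorphism, with the reversed homotopy furnishing the inverse. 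Composing and chaining over the partition pieces gives $\HI_*(S^\alpha,\phi^\alpha)\cong\HI_*(S^\beta,\phi^\beta)$.

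The hard part is the bookkeeping of isolation through the several auxiliary constructions: that one block $B$ simultaneously isolates all intermediate flows $\phi_\lambda$ (handled by the partition and the openness of the block conditions), that $B\times\mS^1$ is a genuine isolating block for $\hat\phi$ so that the family $\f_\lambda$ is Lyapunov with respect to a fixed neighbourhood rather than a $\lambda$-dependent one, and that the higher-dimensional system over $M\times\mS^1$ remains isolated — this last point being precisely where the freedom in the parameter $\kappa$ is used.
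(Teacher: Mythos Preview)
Your proposal is correct and follows essentially the same approach as the paper: construct a smooth family of Lyapunov functions via a product flow on $M\times\mS^1$, concatenate with the isolated homotopies coming from $\IMS$, and feed the result into the higher-dimensional continuation machine of Theorem~\ref{thm:cont1}. The one noteworthy difference is organisational: you first partition $[0,1]$ so as to work over each subinterval with a single fixed isolating block $B$ (common to all $\phi_\lambda$ in that subinterval), and then observe that $B\times\mS^1$ is itself an isolating block for the product flow, so that Proposition~\ref{prop:lyap1} gives Lyapunov functions with the Lyapunov property on the \emph{fixed} set $B$; the paper instead works directly with $N\times\mS^1$ as an isolating neighbourhood, extracts a block $B\subset N\times\mS^1$ whose fibres $B_\mu$ vary with $\mu$, and carries this $\lambda$-dependent family $N_\lambda$ through the rest of the argument, so that the higher-dimensional isolating set is $C=\{(x,\mu):x\in N_{\omega(\mu)}\}$ rather than a product. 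Your reduction buys a slightly cleaner bookkeeping (one fixed $B$ throughout, $C=B\times[-\tfrac{\delta}{2},1+\tfrac{\delta}{2}]$), at the cost of the extra partition-and-chain step; the paper's version avoids the partition but must track varying neighbourhoods. Both are valid and the core analytic content is identical.
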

\begin{proof}
The first step in the proof is to construct a homotopy of Lyapunov functions.
As before we take $\mu\in \mS^1 \cong \mR/2\mZ$ and set $\lambda = \omega(\mu)$, where $\omega$ is defined in Section \ref{subsec:indep}.
Define the product flow $\Phi:\mR\times M\times \mS^1\rightarrow M\times\mS^1$ by $\Phi(t,x,\mu)=(\phi_{\omega(\mu)}(t,x),\mu)$. By assumption
$N \times \mS^1$ is an isolating neighborhood for $\Phi$, and hence contains an isolating block $B\subset N \times \mS^1$ with
$\Inv(B,\Phi) = \bigcup_{\mu\in [0,1]} S_{\omega(\mu)}$.
By Proposition~\ref{prop:lyap1} there exists a Lyapunov function $F_\Phi: M\times\mS^1\to \mR$ for the flow $\Phi$ with Lyapunov property respect to $B$. The fibers $B_\mu=\{x\in M~|~(x,\mu)\in B\}$ are   isolating neighborhoods for  the flows $\phi_{\omega(\mu)}$ and the functions $f^\mu_{\phi_{\omega(\mu)}} (x)= F_\Phi(x,\mu)$ are Lyapunov functions for $\phi_{\omega(\mu)}$ on $B_\mu$. Denote the Lyapunov functions at $\mu=0,1$ by $f^\alpha_{\phi^\alpha}$ and $f^\beta_{\phi^\beta}$  respectively and set  $B^\alpha = B_0$ and $B^\beta= B_1$. We have now established a homotopy of Lyapunov functions $f^\mu_{\phi_{\omega(\mu)}}$.

Choose a metric $g$ on $M$. Since by Corollary \ref{cor:MS-pairs}, $\IMS(f^\alpha_{\phi^\alpha};B^\alpha)\not = \varnothing$ and $\IMS(f^\beta_{\phi^\beta};B^\beta)\not = \varnothing$, then there  exist Morse-Smale pairs $(f^\alpha,g^\alpha)$ and $(f^\beta,g^\beta)$ on $B^\alpha$ and $B^\beta$ respectively.
The associated homotopies are $(f^\alpha_\lambda,g^\alpha_\lambda)$ between
$(f^\alpha,g^\alpha)$ and $(f^\alpha_{\phi^\alpha},g)$, and similarly
$(f^\beta_\lambda,g^\beta_\lambda)$ between $(f^\beta,g^\beta)$ and $(f^\beta_{\phi^\beta},g)$. 
Define the homotopy
$$
f_\lambda = \begin{cases}
f^\alpha_{3\lambda}      & \text{ for}~\lambda \in [0,\frac{1}{3}], \\
f^{3\lambda -1}_{\phi_{\omega(3\lambda -1)}}      & \text{ for}~\lambda \in [\frac{1}{3},\frac{2}{3}],\\
f^\beta_{3-3\lambda}    & \text{ for}~\lambda \in [\frac{2}{3},1],
\end{cases}
$$
which is a piecewise smooth homotopy between $f^\alpha$ and $f^\beta$ and
similarly
$$
g_\lambda = \begin{cases}
g^\alpha_{3\lambda}      & \text{ for}~\lambda \in [0,\frac{1}{3}], \\
g      & \text{ for}~\lambda \in [\frac{1}{3},\frac{2}{3}],\\
g^\beta_{3-3\lambda}    & \text{ for}~\lambda \in [\frac{2}{3},1].
\end{cases}
$$
Furthermore we have the isolating neighborhood,
$$
N_\lambda = \begin{cases}
B^\alpha      & \text{ for}~\lambda \in [0,\frac{1}{3}], \\
B_{3\lambda -1}     & \text{ for}~\lambda \in [\frac{1}{3},\frac{2}{3}],\\
B^\beta    & \text{ for}~\lambda \in [\frac{2}{3},1].
\end{cases}
$$
As before we tacitly assume a reparametrization of the variable $\lambda$ to make the above homotopies smooth. We denote the negative gradient flows of the homotopy $(f_\lambda,g_\lambda)$ by $\psi^\lambda_{(f_\lambda,g_\lambda)}$ and
by assumption and construction (cf.~Lemma \ref{lem:inv2}) $\Inv(N_\lambda, \psi^\lambda_{(f_\lambda,g_\lambda)}) \subset \Int(N_\lambda)$.

Take $\mu\in \mS^1 \cong \mR/2\mZ$ and set $\lambda = \omega(\mu)$, where $\omega$ is defined in Section \ref{subsec:indep}
and consider the negative gradient flow $\Psi^\times_\kappa: \mR \times M \times \mS^1 \to M\times \mS^1$, given by the
function
$
F(x,\mu) =  f_{\omega(\mu)}(x) + r \bigl[ 1+\cos(\pi \mu)\bigr]
$ 
and product metric 
$G_{(x,\mu)}^\times  = (g_{\omega(\mu)})_x \oplus \frac{1}{\kappa} d\mu^2$.
For $r$ large and $\kappa$ small, and a small perturbation $G$ of the metric $G^\times$ we obtain a Morse-Smale pair $(F,G)$  on $C = \{(x,\mu)~:~ x\in N_{\omega(\mu)}\}$.
We can now repeat the proof of Theorem \ref{thm:cont1} to conclude isomorphisms
$$
\HM_*(\cQ^\alpha)\cong \HM_*(\cQ^\beta).
$$
If in the above construction we choose different Morse-Conley-Floer quadruples ${\cQ^\alpha}'$ and ${\cQ^\beta}'$ we obtain the commutative square of canonical isomorphisms
$$
\xymatrix{
\HM_*(\cQ^\alpha)\ar[d]^{\cong}\ar[r]^\cong&\HM_*({\cQ^\alpha}')\ar[d]^{\cong}\\
\HM_*(\cQ^\beta)\ar[r]^{\cong}&\HM_*({\cQ^\beta}'),
}
$$
which shows that  Morse-Conley-Floer homologies are isomorphic.
\end{proof}

The Morse-Conley-Floer homology is invariant under global deformations if the flows are related by continuation. This property allows for the computation of the index in highly non-linear situations, by continuing the flow to a flow that is better understood. 

\begin{definition}
\label{def:globalcont}
{\em
Isolated invariant sets $S^\alpha,S^\beta$ for the flows $\phi^\alpha$ and $\phi^\beta$ respectively, are said to \emph{related by continuation} if there exists a smooth homotopy $\phi_\lambda$ of flows, and a partition $\lambda_i$ of $[0,1]$, i.e.
$$
0=\lambda_0<\lambda_1<\ldots<\lambda_n=1,
$$
along with compact sets $N^\alpha=N_0 ,\ldots, N_{n-1} = N^\beta$, such that $N_i$ are  isolating neighborhoods for all flows $\phi_\lambda$, for all $\lambda_i\leq\lambda\leq\lambda_{i+1}$, and $\Inv(N_{i-1},\phi_{\lambda_i})=\Inv(N_i,\phi_{\lambda_i})$, $\Inv(N^\alpha,\phi^\alpha)=S^\alpha$, and $\Inv(N^\beta,\phi^\beta) =S^\beta$.
}
\end{definition}

Composing the isomorphisms yields a global continuation theorem.

\begin{theorem}
\label{thm:continuation}
Let $(S^\alpha,\phi^\alpha)$ and $(S^\beta,\phi^\beta)$ be related by continuation. Then there exists canonical isomorphisms
$$
\HI_*(S^\alpha,\phi^\alpha)\cong \HI_*(S^\beta,\phi^\beta).
$$
\end{theorem}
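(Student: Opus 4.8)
The plan is to chain together the local continuation isomorphisms of Proposition~\ref{prop:continuation} along the partition supplied by Definition~\ref{def:globalcont}, using the independence of Morse-Conley-Floer homology of the isolating neighborhood (Theorem~\ref{thm:cont2}) to glue consecutive steps together into a single canonical isomorphism.

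First, fix the continuation data: a smooth homotopy $\phi_\lambda$, a partition $0 = \lambda_0 < \lambda_1 < \cdots < \lambda_n = 1$, and compact sets $N^\alpha = N_0, \ldots, N_{n-1} = N^\beta$ such that $N_i$ is an isolating neighborhood for $\phi_\lambda$ for all $\lambda \in [\lambda_i,\lambda_{i+1}]$, with $\Inv(N_{i-1},\phi_{\lambda_i}) = \Inv(N_i,\phi_{\lambda_i})$. Write $S_{i,\lambda} = \Inv(N_i,\phi_\lambda)$. For each $i \in \{0,\ldots,n-1\}$, reparametrize $[\lambda_i,\lambda_{i+1}]$ affinely (and smoothly, as in the earlier proofs) to $[0,1]$; the restricted, reparametrized homotopy is then a smooth family of flows for which the single compact set $N_i$ is an isolating neighborhood throughout. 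Proposition~\ref{prop:continuation} therefore applies on each segment and yields a canonical isomorphism
\begin{equation*}
\Theta_*^{(i)} : \HI_*(S_{i,\lambda_i},\phi_{\lambda_i}) \xrightarrow{\cong} \HI_*(S_{i,\lambda_{i+1}},\phi_{\lambda_{i+1}}).
\end{equation*}

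Next, I would glue these. By Definition~\ref{def:globalcont}, at each interior partition point there is an equality of isolated invariant sets $S_{i-1,\lambda_i} = S_{i,\lambda_i}$ as subsets of $M$, for one and the same flow $\phi_{\lambda_i}$; hence $\HI_*(S_{i-1,\lambda_i},\phi_{\lambda_i})$ and $\HI_*(S_{i,\lambda_i},\phi_{\lambda_i})$ are the Morse-Conley-Floer homology of the very same pair, and by Theorem~\ref{thm:cont2} this group does not depend on whether $N_{i-1}$ or $N_i$ is used as isolating neighborhood, giving a canonical identification of the target of $\Theta_*^{(i-1)}$ with the source of $\Theta_*^{(i)}$. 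Since $S_{0,\lambda_0} = \Inv(N^\alpha,\phi^\alpha) = S^\alpha$ and $S_{n-1,\lambda_n} = \Inv(N^\beta,\phi^\beta) = S^\beta$, the composite
\begin{equation*}
\Phi_*^{\beta\alpha} := \Theta_*^{(n-1)} \circ \cdots \circ \Theta_*^{(0)} : \HI_*(S^\alpha,\phi^\alpha) \xrightarrow{\cong} \HI_*(S^\beta,\phi^\beta)
\end{equation*}
is well-defined and, as a composition of isomorphisms, is itself an isomorphism. Canonicity of $\Phi_*^{\beta\alpha}$ follows because each $\Theta_*^{(i)}$ is canonical in the sense of Proposition~\ref{prop:continuation} (compatible with the choice of Morse-Conley-Floer quadruples, by the commutative square at the end of its proof) and the gluing identifications are canonical by Theorem~\ref{thm:cont2}.

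The genuinely new analytic content has already been absorbed into Proposition~\ref{prop:continuation}; what remains is essentially bookkeeping. The one point I expect to require care — and which is the main thing to check — is the compatibility at the partition points: one must verify that $\HI_*(S_{i-1,\lambda_i},\phi_{\lambda_i})$ and $\HI_*(S_{i,\lambda_i},\phi_{\lambda_i})$ are not merely abstractly isomorphic but \emph{canonically identified}, which is exactly the combination of the set-level equality $S_{i-1,\lambda_i} = S_{i,\lambda_i}$ built into Definition~\ref{def:globalcont} with the isolating-neighborhood independence of Theorem~\ref{thm:cont2}. Once this is in place, the composition laws for the canonical isomorphisms (Theorems~\ref{prop:cont3} and~\ref{prop:cont4}) ensure that the resulting $\Phi_*^{\beta\alpha}$ behaves functorially under further concatenation of continuations, so that the isomorphism is independent of how the partition is refined.
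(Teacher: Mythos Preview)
Your proposal is correct and follows essentially the same approach as the paper, which simply states that composing the local continuation isomorphisms of Proposition~\ref{prop:continuation} along the partition yields the global result. You have spelled out in detail what the paper leaves implicit, including the role of Theorem~\ref{thm:cont2} at the partition points.
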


\section{Morse decompositions and connection matrices}
\label{sec:MD-CM}
If the dynamics restricted to an isolated invariant set $S$ is not recurrent a decomposition extracting gradient dynamics exists and
leads to the concept of \emph{Morse decomposition}, which generalizes the attractor-repeller pair.
\begin{definition}
\label{defn:MD}
{\em
Let $S = \Inv(N,\phi)$ be an isolated invariant set.
A family $\sfS = \{S_i\}_{i\in I}$, indexed by a finite poset $(I,\le)$, consisting of non-empty, compact, pairwise disjoint, invariant subsets $S_i\subset S$,  is 
a  \emph{Morse decomposition} for $S$ if, for every $x\in S\setminus (\bigcup_{i\in I} S_i)$, there exists $i<j$ such that
$$
\alpha(x) \subset S_j\quad {\rm  and} \quad \omega(x) \subset S_i.
$$
The sets $S_i \subset S$ are called  \emph{Morse sets}.  
 The set of all  Morse decompositions of  $S$ under $\phi$ is denoted by
$\sMD(S,\phi)$. 
}
\end{definition}
\begin{figure}
\def\svgwidth{.65\textwidth}
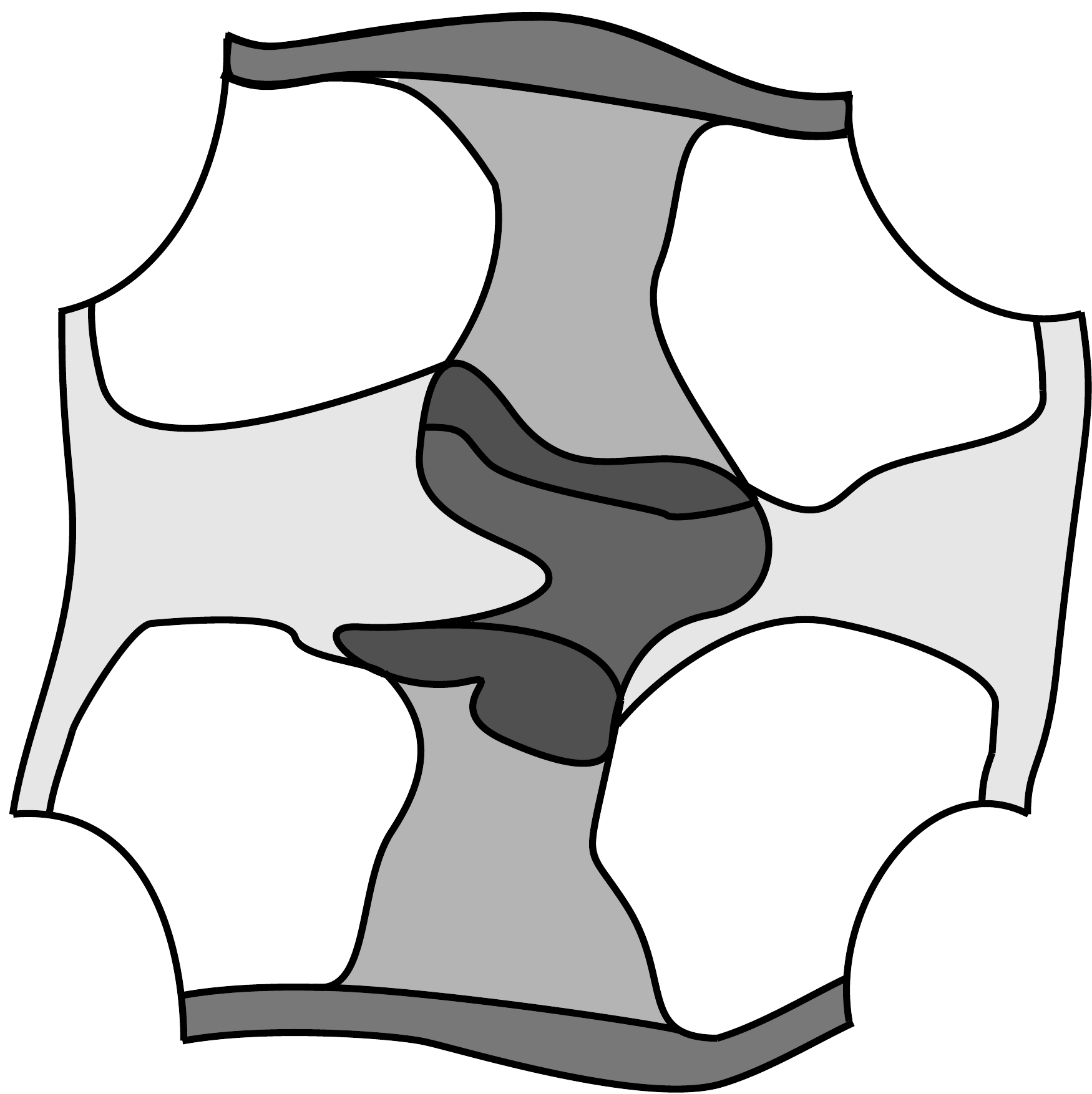
\caption{The Morse decomposition of $B^\dagger$, given in Lemma\ \ref{lem:att-in-att}. In Figure\ \ref{fig:bdaggerauv} the sets $A_U$ and $A^*_V$ are depicted.}
\label{fig:morsedec}
\end{figure}
Let $B\subset N$ be an isolating block for $S$. In the proof of Proposition \ref{prop:lyap1} we 
constructed a flow $\phi^\dagger: \mR\times B^\dagger \to B^\dagger$ as an extension of $\phi$, where $B^\dagger$
is a compact submanifold with piecewise smooth boundary.
Recall the attractor $A_V$ and the repeller $A^*_U$. From the duals we have $S = A_U \cap A_V^*$, cf. Figure\ \ref{fig:bdaggerauv}. Now consider an attractor $A$ in $S$, and its dual repeller $R=A|_S^*$ in $S$. Because $A,R$ are isolated invariant sets in $S$, and $S$ is isolated invariant in $B^\dagger$, $A,R$ are also isolated invariant on $B^\dagger$ for the flow $\phi^\dagger$, see\ \cite{Conley-Zehnder-2}. By a similar reasoning $A$, and $R$ are also isolated invariant sets on $M$ for the flow $\phi$. The above situation is sketched in Figure\ \ref{fig:morsedec}.

\begin{lemma}[cf.~\cite{KMV,KMV2}]
\label{lem:att-in-att}
The isolated invariant set $B^\dagger$ has a natural Morse decomposition $A_V<S<A^*_U$.
The invariant set
$$
\aAz =  A_V\cup W^u(A,\phi^\dagger)   \subset \aA = A_V \cup W^u(S,\phi^\dagger),
$$
is an attractor in $\aA$ and therefore an attractor in $B^\dagger$.
\end{lemma}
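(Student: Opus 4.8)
The plan is to verify the two assertions of the lemma separately. First I would establish that $A_V < S < A^*_U$ is a Morse decomposition of $B^\dagger$. The key inputs were assembled in the proof of Proposition~\ref{prop:lyap1}: the attractor $A_U = \omega(U)$ with dual repeller $A^*_U = \alpha(W)$, the attractor $A_V = \omega(V)$ with dual repeller $A^*_V = \alpha(Z)$, and the identity $S = A_U \cap A^*_V$. The first step is to observe that $A_V \subset A_U$ and $A^*_U \subset A^*_V$; this follows from the inclusions $V \subset U$ (equivalently $W \subset Z$) of attracting neighborhoods, since $\omega$ and $\alpha$ are monotone with respect to such inclusions. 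Given these nestings, $(A_V, A^*_V)$ and $(A_U, A^*_U)$ are two nested attractor–repeller pairs, and one checks directly from the definition of a Morse decomposition that for every $x \in B^\dagger \setminus (A_V \cup S \cup A^*_U)$ the $\alpha$- and $\omega$-limit sets land in the appropriate Morse sets: if $x \notin A_U$ then $\alpha(x) \subset A^*_U$ (dual repeller) and $\omega(x) \subset A_U$; combined with $x \notin S = A_U \cap A^*_V$ and the attractor–repeller structure of $(A_V, A^*_V)$ inside $A_U$, one sorts $x$ into exactly one of the three layers. This is the standard fact that a nested pair of attractor–repeller pairs yields a three-set Morse decomposition, so I would cite \cite{KMV,KMV2} and keep the verification brief.

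Second I would prove that $\aAz = A_V \cup W^u(A,\phi^\dagger)$ is an attractor in $\aA = A_V \cup W^u(S,\phi^\dagger)$. Here $A$ is the chosen attractor in $S$ with dual repeller $R = A|_S^*$ in $S$. The main point is to exhibit a compact neighborhood $U'$ of $\aAz$, relative to $\aA$, with $\omega(U') = \aAz$. A natural candidate is obtained from an isolating neighborhood of $A$ inside $S$: choose a compact neighborhood $P$ of $A$ in $S$ that is forward invariant for $\phi|_S$ with $\omega(P) = A$, and take $U'$ to be (a suitable compact piece of) $A_V \cup W^u(P, \phi^\dagger)$. One then verifies that $U'$ is forward invariant in $\aA$ and that its $\omega$-limit set is exactly $A_V \cup W^u(A,\phi^\dagger)$: orbits starting in $W^u(P,\phi^\dagger)$ either converge into $A_V$ or, having entered $S$, remain in $S$ and flow into $A$ since $P$ is forward invariant with $\omega(P)=A$; orbits starting in $A_V$ stay in $A_V$. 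The fact that $W^u(S,\phi^\dagger)$ attracts a neighborhood and that $A_V < S$ in $B^\dagger$ is what makes $\aA$ itself an attractor (this is presumably established in the cited references, and $\aAz \subset \aA$ inherits compactness and invariance of the attractor structure). Finally, an attractor in $\aA$ is an attractor in $B^\dagger$ because $\aA$ is itself an attractor in $B^\dagger$ — an attractor of an attractor is an attractor, again a standard transitivity statement for attractor–repeller theory.

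The main obstacle I anticipate is the careful bookkeeping in the second part: making precise the claim that $\aA = A_V \cup W^u(S,\phi^\dagger)$ is an attractor and that the sub-union with $W^u(A,\phi^\dagger)$ replacing $W^u(S,\phi^\dagger)$ is again an attractor inside it. The subtlety is that $W^u(A,\phi^\dagger)$ is not closed — its closure also contains $W^u(R,\phi^\dagger)$-limit behavior and parts of $A_V$ — so one must argue that $A_V \cup W^u(A,\phi^\dagger)$ is nonetheless compact and invariant, using that $A$ is an attractor in $S$ so that $\overline{W^u(A,\phi^\dagger)} \setminus W^u(A,\phi^\dagger)$ meets $S$ only in $A$, together with the structure of $A_V$ as the global attractor-layer at the bottom. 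I would handle this by working with the Lyapunov function $F_\Phi$-type functions from Proposition~\ref{prop:lyap1} (or the attractor–repeller Lyapunov functions already constructed there) to control the limit sets, rather than by bare point-set arguments, and defer the genuinely routine transitivity statements to \cite{KMV,KMV2}, \cite{Conley-Zehnder-2}.
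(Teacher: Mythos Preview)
Your treatment of the first assertion (the Morse decomposition $A_V < S < A^*_U$ of $B^\dagger$) is more detailed than the paper's, which simply defers to Proposition~\ref{prop:lyap1}; your nested attractor--repeller argument is correct and standard.

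For the second assertion the paper takes a different and cleaner route than your direct construction of an attracting neighborhood. The paper observes that $A_V < S$ is an attractor--repeller pair in $\aA$ (from Proposition~\ref{prop:lyap1}), and that $R$, being a repeller in $S$, is therefore a repeller in $\aA$. These two facts yield the Morse decomposition $A_V < A < R$ of $\aA$; standard Morse decomposition theory then gives immediately that $\aAz = A_V \cup W^u(A,\phi^\dagger)$ is the attractor in $\aA$ dual to the repeller $R$. Transitivity of attractors finishes the argument. This structural route absorbs into the cited general theory precisely the compactness and closure issues you flag as obstacles: you never have to produce ``a suitable compact piece of $A_V \cup W^u(P,\phi^\dagger)$'' or worry about $\overline{W^u(A,\phi^\dagger)}$, because the identification of $\aAz$ as an attractor comes for free once the three-term Morse decomposition of $\aA$ is in place. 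Your hands-on approach could likely be completed, but the phrase ``a suitable compact piece'' is hiding exactly the work that the paper's argument avoids; the missing idea in your proposal is to first refine the attractor--repeller pair $(A_V,S)$ in $\aA$ to the Morse decomposition $A_V < A < R$, rather than attacking $\aAz$ directly.
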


\begin{proof}
From the proof of Proposition \ref{prop:lyap1} it follows that $A_V < S$ is an attractor-repeller in $\aA$.
The set $R$ is a repeller in $S$ and therefore is a repeller in $\aA$. This provides the  Morse
decomposition
$$
A_V < A < R,
$$
for $\aA$. From this we derive that $\aAz< R$, with
$$
\aAz = A_V \cup W^u(A,\phi^\dagger),
$$
is an attractor-repeller pair in $\aA$ and $\aAz$ is an attractor in $\aA$ and thus $\aAz$ is an
attractor in $B^\dagger$, which proves the lemma.
\end{proof}

For a given isolated invariant sets $S$, with isolating block $B$, Lemma \ref{lem:att-in-att} yields the following filtration 
of attractors in $B^\dagger$ of the extended flow $\phi^\dagger$:
$$
\varnothing \subset A_V \subset \aAz \subset \aA \subset B^\dagger.
$$
The associated Morse decomposition of $B^\dagger$ is given by
$$
A_V < A < R < A^*_U,
$$
indeed, 
$$
A_V^* \cap \aAz = A_V|^*_{\aAz} = A,\quad {\rm and}\quad \aAz^*\cap \aA = \aAz|^*_{\aA} = R.
$$
From the proof of Proposition \ref{prop:lyap1} we have the Lyapunov functions
$$
f_{A_V}, f_{\aAz},\quad {\rm and}\quad  f_{\aA}.
$$
Consider the positive linear combination
$$
f_{\phi^\dagger}^\epsilon = \lambda f_{A_V} +\mu  f_{\aA} +\epsilon  f_{\aAz} 
$$
which is constant on the Morse sets $A_V$, $A$, $R$ and $A^*_U$ and 
decreases along orbits of $\phi^\dagger$. The values at the Morse sets are:
$f_{\phi^\dagger}^\epsilon|_{A_V} = 0$, $f_{\phi^\dagger}^\epsilon|_A = \lambda$, $f_{\phi^\dagger}^\epsilon|_R = \lambda + \epsilon$ and $f_{\phi^\dagger}^\epsilon|_{A^*_U} = \lambda+\mu+\epsilon$. The function $f_{\phi^\dagger}^\epsilon$ is also Lyapunov function for $(A\cup R, \phi)$, because it satisfies the Lyapunov property on a suitable chosen isolating neighborhood of $A\cup R$ for the flow $\phi$, which we therefore denote by $f_\phi^\epsilon$. Moreover, $f_{\phi}^\epsilon$ is an $\epsilon$-perturbation of $h^0_\epsilon =  \lambda f_{A_V} +\mu  f_{\aA}\in \lyap(S,\phi)$.

\begin{lemma}
\label{lem:poly-2}
Let $S$ be an isolated invariant set and let $A\subset S$
be an attractor in $S$. Then,
\begin{equation}
\label{eqn:MRfor-S}
 P_t(A,\phi) + P_t(R,\phi)  = P_t(S,\phi) +  (1+t) Q_t,
\end{equation}
where $P_t(A,\phi), P_t(R,\phi)$ and $P_t(S,\phi)$ are the Poincar\'e polynomials
of the associated  Morse-Conley-Floer homologies, and $Q_t$ is a polynomial with non-negative coefficients.
\end{lemma}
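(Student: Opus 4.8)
The plan is to realise $\HI_*(A,\phi)$, $\HI_*(R,\phi)$ and $\HI_*(S,\phi)$ simultaneously with a single Morse--Smale function, obtained by perturbing the Lyapunov function $f_\phi^\epsilon$ constructed above, and then to extract from the ordering of its critical values a short exact sequence of Morse complexes; the polynomial identity is the standard rank count over the resulting long exact sequence.

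First I would fix $\lambda,\mu>0$ and a small $\epsilon>0$ and recall that $h^0_\epsilon=\lambda f_{A_V}+\mu f_{\aA}\in\lyap(S,\phi)$, while $f_\phi^\epsilon=h^0_\epsilon+\epsilon f_{\aAz}$ has, on a suitable isolating neighbourhood $N$ of $S$, the Lyapunov property for $\phi$ relative to the decomposition $\{A,R\}$ of $S$: it equals $\lambda$ on $A$ and $\lambda+\epsilon$ on $R$, and $d f_\phi^\epsilon\cdot X<0$ on $N\setminus(A\cup R)$, so that $\crit(f_\phi^\epsilon)\cap N\subset A\cup R$. Picking $\lambda<c_-<c_+<\lambda+\epsilon$, I would set $N_A=\{f_\phi^\epsilon\le c_-\}\cap N$ and $N_R=\{f_\phi^\epsilon\ge c_+\}\cap N$. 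Since sub- and superlevel sets of a Lyapunov function cut out isolating neighbourhoods, $N_A$ and $N_R$ are isolating neighbourhoods for $A$ and $R$, and $f_\phi^\epsilon|_{N_A}\in\lyap(A,\phi)$, $f_\phi^\epsilon|_{N_R}\in\lyap(R,\phi)$ (if smooth boundaries are preferred one may pass to isolating blocks inside $N_A,N_R$; by Theorems~\ref{thm:cont1} and~\ref{thm:cont2} this does not affect homology).

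Next I would fix a metric $g$ and a Morse--Smale perturbation $f$ of $f_\phi^\epsilon$ with $\Vert f-f_\phi^\epsilon\Vert_{C^2}\ll\epsilon$, small enough that Proposition~\ref{prop:morse} applies on $N$, $N_A$ and $N_R$ at once; then $(f,g,N,\co)$, $(f,g,N_A,\co)$ and $(f,g,N_R,\co)$ are Morse--Conley--Floer quadruples for $(S,\phi)$, $(A,\phi)$ and $(R,\phi)$, and $C_*(f,N)$, $C_*(f,N_A)$, $C_*(f,N_R)$ compute the corresponding Morse--Conley--Floer homologies. Because $\crit(f_\phi^\epsilon)\cap N\subset A\cup R$, the critical points of $f$ in $N$ fall into a cluster near $A$, with $f$-values close to $\lambda$, and a cluster near $R$, with $f$-values close to $\lambda+\epsilon$; these are exactly $\crit(f)\cap N_A$ and $\crit(f)\cap N_R$, so $C_*(f,N)=C_*(f,N_A)\oplus C_*(f,N_R)$ as graded groups. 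Since $\partial(f,g,N,\co)$ strictly decreases $f$ along connecting orbits, the low-value summand $C_*(f,N_A)$ is a subcomplex with quotient $C_*(f,N_R)$; moreover any connecting orbit of $\psi_{(f,g)}$ between two critical points near $A$ (resp.\ near $R$) has its $f$-value trapped below $c_-$ (resp.\ above $c_+$) and hence stays in $N_A$ (resp.\ $N_R$), so the sub- and quotient differentials are $\partial(f,g,N_A,\co)$ and $\partial(f,g,N_R,\co)$. This produces the short exact sequence of chain complexes
$$
0\longrightarrow C_*(f,N_A)\longrightarrow C_*(f,N)\longrightarrow C_*(f,N_R)\longrightarrow 0 .
$$

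The associated long exact sequence in homology is
$$
\cdots\longrightarrow \HI_k(A,\phi)\longrightarrow \HI_k(S,\phi)\longrightarrow \HI_k(R,\phi)\xrightarrow{\ \delta_k\ }\HI_{k-1}(A,\phi)\longrightarrow\cdots,
$$
and, since all these groups have finite rank and vanish for $k<0$ and $k>\dim M$, comparing ranks degree by degree along this exact sequence yields
$$
P_t(A,\phi)+P_t(R,\phi)=P_t(S,\phi)+(1+t)Q_t,\qquad Q_t=\sum_k\operatorname{rank}(\delta_{k+1})\,t^k ,
$$
whose coefficients are non-negative. I expect the real work to be in the third paragraph: verifying carefully that $N_A$ and $N_R$ are honest isolating neighbourhoods for $A$ and $R$ --- which forces one to keep $N$ away from the auxiliary Morse sets $A_V$ and $A^*_U$ of $B^\dagger$ --- and that the gradient connecting orbits localise, so that the sub/quotient differentials are \emph{literally} the Morse differentials over $N_A$ and $N_R$. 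This is the classical ``filtered Morse complex'' mechanism underlying the Morse inequalities for attractor--repeller pairs, but it requires balancing $\epsilon$ against $\Vert f-f_\phi^\epsilon\Vert_{C^2}$ in a consistent way.
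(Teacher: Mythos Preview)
Your proposal is correct and follows essentially the same route as the paper: perturb the Lyapunov function $f_\phi^\epsilon$ to a single Morse--Smale function that simultaneously computes $\HI_*(A,\phi)$, $\HI_*(R,\phi)$ and $\HI_*(S,\phi)$, observe that the value gap forces an upper-triangular boundary operator and hence a short exact sequence of chain complexes, and read off the polynomial identity from the resulting long exact sequence. The only cosmetic difference is that the paper takes abstract isolating blocks $B_A,B_R$ rather than your explicit sub/super-level sets $N_A,N_R$, and the paper's indexing of $Q_t$ is shifted by one relative to yours (your convention $Q_t=\sum_k\operatorname{rank}(\delta_{k+1})t^k$ is the correct one).
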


\begin{proof} 
Let $g$ be a Riemannian metric on $M$ and let $f_\phi^\epsilon$ be the Lyapunov function for the attractor-repeller pair $(A,R)$ as described above.
By Proposition \ref{prop:morse} there exists $\epsilon$-$C^2$ close perturbation $f$ of $f_\phi^\epsilon$, such that $(f,g)$ is a Morse-Smale pair on $B$. Then, if $\epsilon>0$ is sufficiently small,  $f$ is also a small perturbation of $f_\phi^0$!
The next step is to consider the algebra provided by $f$.
The latter implies that $\HM_*(f,g,B,\co)\cong \HI_*(S,\phi)$, where the chain complex for $f$
is given by
$$
\partial^S_k: C_k(S) \to C_{k-1}(S),
$$
with $\partial^S_k = \partial_k(f,g,B,\co)$ and $C_k(S) = C_k(f,B)$,
and $H_*\bigl(C_*(S),\partial_*^S\bigr) = \HM_*(f,g,B,\co)$.

Let $B_A$ and $B_R$ be a isolating blocks for $A$ and $R$ respectively, then the
Morse-Conley-Floer homologies of $A$ and $R$ are defined by restricting the count of the critical points and connecting orbits of $f$ to $B_A$ and $B_R$. We obtain the chain complexes:
$$
\partial^A_k: C_k(A) \to C_{k-1}(A),\quad{\rm and}\quad \partial^R_k: C_k(R) \to C_{k-1}(R),
$$
with $\partial^A_k = \partial_k(f,g,B_A,\co)$ and $C_k(A) = C_k(f,B_A)$
and similarly $\partial^R_k = \partial_k(f,g,B_R,\co)$ and $C_k(R) = C_k(f,B_R)$.
The homologies satisfy: 
\begin{align*}
H_*\bigl(C_*(A),\partial_*^A\bigr) &= \HM_*(f,g,B_A,\co) \cong \HI_*(A,\phi);\\
H_*\bigl(C_*(R),\partial_*^R\bigr) &= \HM_*(f,g,B_R,\co) \cong \HI_*(R,\phi).
\end{align*}
Because of the properties of $f$ we see that $C_k(S)=C_k(A)\oplus C_k(R)$. The gradient system defined by $f$ only allows connections from $R$ to $A$ and not vice versa, and therefore for the negative $g$-gradient flow $\psi_{(f,g)}$ defined by $(f,g)$, the sets
$$
 A_{(f,g)} = \Inv(B_A,\psi_{(f,g)}) <
R_{(f,g)}  = \Inv(B_R,\psi_{(f,g)})
$$
 is a Morse decomposition for $S_{(f,g)}  = \Inv(B,\psi_{(f,g)} )$.
 This Morse decomposition provides additional information about the boundary operator $\partial^S_*$:
$$
\partial^S_k = \left(\begin{array}{cc}\partial_k^A & \delta_k \\0 & \partial_k^R\end{array}\right):
C_k(A) \oplus C_k(R) \to C_{k-1}(A) \oplus C_{k-1}(R),
$$
where $\delta_k:C_k(R) \to C_{k-1}(A)$ counts connections from $ R_{(f,g)} $ to $A_{(f,g)} $ under $\psi$.
For the sub-complexes $\bigl(C_*(A),\partial_*^A\bigr)$ and $\bigl(C_*(R),\partial_*^R\bigr)$
provide natural inclusions and projections
$$
i_k: C_k(A) \to C_k(S) = C_k(A)\oplus C_k(R), \quad a\mapsto (a,0),
$$
and
$$
j_k: C_k(S) = C_k(A) \oplus C_k(R) \to C_k(R), \quad (a,r)\mapsto r.
$$
The maps $i_*$ and $j_*$ are chain maps. Indeed, for $a\in C_k(A)$, and $r\in C_k(R)$, we have 
$$
i_{k-1}\partial^A_k(a)=(0,\partial_k^Aa)=\partial^S_k(0,a)=\partial^S_ki_k(a),
$$
and
$$
j_{k-1}\partial^S_k(a,r)=j_{k-1}(\partial_k^Aa+\delta_kr,\partial^R_kr)=\partial_k^Rr=\partial_k^Rj_k(a,r).
$$

This implies that the induced homomorphisms
$
i_k^*: H_k\bigl(C_*(A)\bigr) \to H_{k-1}\bigl(C_*(A)\bigr)
$ and
$
j_k^*: H_k\bigl(C_*(R)\bigr) \to H_{k-1}\bigl(C_*(R)\bigr)
$
are well-defined. The maps $i_*$ and $j_*$ define the following short exact sequence
\begin{eqnarray}
\label{eqn:short}
0 \rightarrow  C_k(A) \xrightarrow{i_k} C_k(S)\xrightarrow{j_k} C_k(R) \rightarrow 0.
\end{eqnarray}
Since $i_*$ and $j_*$ are chain maps, this is actually a short exact sequence of chain maps, and by the snake lemma we obtain the following long exact sequence in homology:\footnote{The homologies
are abbreviated as follows: $H_k(A) = H_k\bigl(C_*(A),\partial_*^A\bigr)$,  $H_k(R) = H_k\bigl(C_*(R),\partial_*^R\bigr)$ and
 $H_k\bigl(S\bigr) = H_k\bigl(C_*(S),\partial_*^S\bigr)$.}
\begin{align*}
\cdots \xrightarrow{\delta_{k+1}} H_k(A)\xrightarrow{i_k}H_k\bigl(S\bigr)
\xrightarrow{j_k}H_k\bigl(R\bigr)\xrightarrow{\delta_k}H_{k-1}\bigl(A\bigr)\xrightarrow{i_{k-1}}\cdots
\end{align*}
We recall that the connecting homomorphisms $H_k\bigl(R\bigr)\xrightarrow{\delta_k}H_{k-1}\bigl(A)$
are established as follows. Let $r \in \ker \partial_k^R$, then $\bigl[ (i_k^{-1}\circ\partial_k^S\circ j_k^{-1})(r)\bigr] \in H_{k-1}\bigl(A\bigr)$.
Indeed, $j_k^{-1}(r) = \bigl( C_k(A),r\bigr)$ and
$\partial_k^S\bigl( j_k^{-1}(r)\bigr) = \bigl( \partial_k^A C_k(A) + \delta_k r, \partial_k^R r\bigr)
=  \bigl( \partial_k^A C_k(A) + \delta_k r, 0\bigr)$. Furthermore,
$i_k^{-1} \bigl( \partial_k^S\bigl( j_k^{-1}(r)\bigr) \bigr) = \partial_k^A C_k(A) + \delta_k r
= \delta_k r +\im \partial_k^A\in \ker \partial_{k-1}^A$, since $\partial_{k-1}^A\delta_k r = 
- \delta_{k-1} \partial_k^R r = 0$.
The class of $i_k^{-1} \bigl( \partial_k^S\bigl( j_k^{-1}(r)\bigr) \bigr)$ is a homology class
in $H_{k-1}\bigl(R\bigr)$ and the we write $[\delta_k r] = \delta_k [r]$.

Since $B$ is compact the chain complexes involved are all finite dimensional and terminate at $k=-1$ and $k=\dim M+1$ and therefore
the Poincar\'e polynomials are well-defined. The Poincar\'e polynomials satisfy
$$
P_t\bigl(H_*\bigl(A\bigr) \bigr) + P_t\bigl(H_*\bigl(R\bigr) \bigr) 
- P_t\bigl(H_*\bigl(S\bigr) \bigr) = (1+t)Q_t,
$$
where $Q_t =\sum_{k\in\mZ} (\rank \delta_k) t^k$, which completes the proof.
\end{proof}

By definition $\partial_*^A$ and $\partial_*^R$ are zero maps on homology and therefore induced maps on homology give rise to a map
$$
\Delta_k=\left(\begin{array}{cc}0 & \delta_k \\0 & 0\end{array}\right): 
H_k(A) \oplus H_k\bigl(R\bigr) \to H_{k-1}\bigl(A\bigr) \oplus H_{k-1}\bigl(R\bigr).
$$
Define $\Delta=\bigoplus_{k\in \mZ} \Delta_k$ and 
$$
\Delta: \bigoplus_{k\in \mZ} \Bigl( H_k(A) \oplus H_k\bigl(R\bigr)\Bigr) 
\to \bigoplus_{k\in \mZ} \Bigl( H_k(A) \oplus H_k\bigl(R\bigr)\Bigr),
$$
with the property $\Delta^2=0$ and is called the \emph{connection matrix} for the attractor-repeller
pair $(A,R)$. The above defined complex is a chain complex and the associated homology is isomorphic to $H_*\bigl(S\bigr)$, see \cite{Mischaikow:1995wl}.

For a given Morse decomposition $\sfS = \{S_i\}_{i\in I}$ of $S$ the Morse sets $S_i$ are again isolated invariant sets and therefore their Morse-Conley-Floer homology
$\HI_*(S_i,\phi)$ is well-defined. The associated Poincar\'e polynomials satisfy the 
Morse-Conley relations.

\begin{theorem}
\label{thm:Morse-rel}
Let $\sfS = \{S_i\}_{i\in I}$ be a Morse decomposition for an isolated invariant set $S$.
Then,
\begin{equation}
\label{eqn:MRfor-S}
\sum_{i\in I} P_t(S_i,\phi)  = P_t(S,\phi) +  (1+t) Q_t,
\end{equation}
where $P_t(S,\phi)$ is the Poincar\'e polynomial of $\HI_*(S,\phi)$, and $Q_t$ is a matrix with non-negative coefficients. These relations are called the Morse-Conley relations and generalize the classical Morse relations for gradient flows. 
\end{theorem}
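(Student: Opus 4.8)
The plan is to argue by induction on the number $n=|I|$ of Morse sets, using Lemma~\ref{lem:poly-2} --- the Morse--Conley relations for an attractor--repeller pair --- as the inductive step, together with the standard correspondence between Morse decompositions and attractor--repeller pairs. For the base case $n=1$, say $I=\{i\}$: a one-element poset contains no pair $i<j$, so no point of $S\setminus S_i$ can satisfy the defining property of a Morse decomposition; hence $S=S_i$ and the claimed identity holds with $Q_t\equiv 0$.

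For the inductive step, let $n\ge 2$ and assume the Morse--Conley relations hold for every Morse decomposition with fewer than $n$ Morse sets. Choose a minimal element $i_0$ of $(I,\le)$. By disjointness of the Morse sets and the defining property of $\sfS$, any orbit of $S$ whose $\alpha$- and $\omega$-limit sets both lie in $S_{i_0}$ must itself lie in $S_{i_0}$; consequently $A:=S_{i_0}$ is an attractor in $S$. Let $R:=A|_S^{*}$ be its dual repeller. By the theory of Morse decompositions and attractor--repeller pairs (cf.~\cite{KMV,KMV2},~\cite{Robbin:1992wp}), $A$ and $R$ are isolated invariant sets, and $\{S_i\}_{i\in I\setminus\{i_0\}}$ is a Morse decomposition of $R$ consisting of $n-1$ Morse sets.

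Applying Lemma~\ref{lem:poly-2} to the attractor--repeller pair $(A,R)=(S_{i_0},R)$ in $S$, and the induction hypothesis to $R$ with the Morse decomposition $\{S_i\}_{i\in I\setminus\{i_0\}}$, gives
\[
P_t(S_{i_0},\phi)+P_t(R,\phi)=P_t(S,\phi)+(1+t)Q_t',\qquad
\sum_{i\in I\setminus\{i_0\}}P_t(S_i,\phi)=P_t(R,\phi)+(1+t)Q_t'',
\]
where $Q_t'$ and $Q_t''$ have nonnegative coefficients. Adding these two identities and cancelling $P_t(R,\phi)$ yields $\sum_{i\in I}P_t(S_i,\phi)=P_t(S,\phi)+(1+t)(Q_t'+Q_t'')$, so $Q_t:=Q_t'+Q_t''$ has nonnegative coefficients. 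All Morse--Conley--Floer homologies occurring here have finite rank and vanish outside degrees $0,\dots,\dim M$, so each $P_t$ is a genuine polynomial, and the induction is complete.

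The one substantive input beyond Lemma~\ref{lem:poly-2} and the formal manipulation of Poincar\'e polynomials is the passage from the poset structure of $\sfS$ to an attractor--repeller pair of $S$ whose members inherit Morse decompositions from $\sfS$ --- here, that a minimal Morse set $S_{i_0}$ is an attractor in $S$ and that its dual repeller carries the Morse decomposition obtained from $\sfS$ by deleting $S_{i_0}$. This is classical (it is part of the structure theory of Morse decompositions in \cite{KMV,KMV2} and \cite{Robbin:1992wp}), but it is the step that requires care; everything else is routine.
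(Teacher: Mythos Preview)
Your proof is correct and takes essentially the same approach as the paper: both iterate Lemma~\ref{lem:poly-2} along a total refinement of the poset order, the paper by choosing a chain of attractors $A_0\subset A_1\subset\cdots\subset A_n=S$ with $S_i=A_{i-1}|^*_{A_i}$ and telescoping the resulting identities, you by peeling off a minimal Morse set as an attractor and inducting on the dual repeller. These are the same argument in different packaging.
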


\begin{proof}
The Morse decomposition $\sfS$ is equivalent to a lattice of attractors $\sA$, see \cite{KMV}. We choose a chain 
$$
A_0=\varnothing \subset A_1\subset \cdots \subset A_n=S
$$ 
in $\sA$, such that $S_i = A_i \cap A_{i-1}^* = A_{i-1}|^*_{A_i}$.
Each attractor $A_i \subset S$ is an isolated invariant set for $\phi$ in $B$ (see \cite{Conley-Zehnder-2}) and
for each $A_i$ we have the attractor-repeller pair $A_{i-1} < A_{i-1}|^*_{A_i}=S_i$.
From Lemma \ref{lem:poly-2} we derive the attractor-repeller pair Morse-Conley relations for
each attractor-repeller pair $(A_{i-1},S_i)$ in $A_i$:
$$
P_t(A_{i-1},\phi) + P_t(S_i,\phi)  - P_t(A_i,\phi) = (1+t) Q^i_t.
$$
Summing $i$ from $i=1$ through to $i=n$ we obtain the Morse-Conley relations (\ref{eqn:MRfor-S}), which proves the theorem.
\end{proof}

\begin{remark}
\label{rmk:conn-matrix}
{\em 
In the proof of the Morse relations we use an extension of a Morse decomposition to a totally ordered 
Morse decomposition, which yields a connection matrix. If more refined Morse decompositions are used other connections may be obtained. This information is contained in the $Q_t$ term of the Morse relations. 
}
\end{remark}

\section{Relative homology of blocks}
\label{sec:relblock}

An important property of Morse-Conley-Floer homology is that it can be computed in terms of singular relative homology of
a topological pair of a defining blocks as pointed out in Property (vi) in Section~\ref{sec:intro}.

\begin{theorem}
\label{thm:iso-block}
Let $S$ be an isolating neighborhood for $\phi$ an let $B$ be an isolating block for $S$. Then
\begin{equation}
\HI_*(S,\phi) \cong H_*(B,B_-;\mZ),\quad \forall k\in \mZ,
\end{equation}
where 
$B_-  =   \{ x\in \partial B~|~X(x)~ \hbox{is outward pointing}\}$
and is called the `exit set'.
\end{theorem}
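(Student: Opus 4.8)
The plan is to exhibit one Morse-Conley-Floer quadruple $\cQ=(f,g,B,\co)$ for $(S,\phi)$, built on the given isolating block $B$, whose Morse complex is the cellular chain complex of the pair $(B,B_-)$. By Definition~\ref{defn:CF-index} together with the canonical isomorphisms of Theorem~\ref{thm:cont2} one has $\HI_*(S,\phi)\cong\HM_*(\cQ)$ for every such quadruple, so it suffices to produce one for which $\HM_*(\cQ)\cong H_*(B,B_-;\mZ)$.

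The first and key step is to choose $(f,g)$ so that the negative gradient flow $\psi_{(f,g)}$ is adapted to the block, in the sense that $B$ is itself an isolating block for $\psi_{(f,g)}$ with the \emph{same} exit set $B_-$. Pick a Lyapunov function $\f\in\lyap(S,\phi)$ whose Lyapunov property holds with respect to $B$ (Proposition~\ref{prop:lyap1}). By Lemma~\ref{lem:crit1}, $\crit(\f)\cap B\subset S\subset\Int(B)$, hence $d\f(x)X(x)<0$ for every $x\in\partial B$. Using at each such $x$ the splitting $T_xM=\mR X(x)\oplus\ker d\f(x)$, one can then choose a Riemannian metric $g$ on $M$ with $-\nabla_g\f=X$ at every point of $\partial B$. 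With this metric the gradient flow $\psi_{(\f,g)}$ has the same behaviour along $\partial B$ as $\phi$: it is transverse to $D_\pm$, it leaves $B$ in the prescribed time directions near $D_\pm$, and $(-\nabla_g\f)\cdot\nu$ agrees with $X\cdot\nu$; hence (using $\Inv(B,\psi_{(\f,g)})=\crit(\f)\cap B\subset\Int B$ from Lemma~\ref{lem:inv2}) the set $B$ is an isolating block for $\psi_{(\f,g)}$ carrying the decomposition $\partial B=B_+\cup B_-\cup B_\pm$ with exit set $B_-$. Now apply Corollary~\ref{cor:MS-pairs} to obtain a Morse-Smale pair $(f,g)\in\IMS(\f;B)$ with $f$ arbitrarily $C^2$-close to $\f$; the transversality and strict sign conditions along $\partial B$ are open, so for $f$ close enough $B$ remains an isolating block for $\psi_{(f,g)}$ with exit set $B_-$, and $(f,g,B,\co)$ is a Morse-Conley-Floer quadruple for $(S,\phi)$ for any choice of orientations $\co$.

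It remains to compute $\HM_*(f,g,B,\co)$. Since $\psi_{(f,g)}$ leaves $B$ precisely through $B_-$ and enters precisely through $B_+$, the pairs $(\{f\le a\}\cap B,\ B_-)$ evolve, as the regular value $a$ increases past the critical values of $f$, by the attachment of one handle of index $\ind_f(x)$ for each critical point $x\in\crit(f)\cap B$; this presents $(B,B_-)$, up to homotopy of pairs, as a relative CW complex with one $k$-cell for each index-$k$ critical point, and the cellular boundary operator is exactly $\partial_*(f,g,B,\co)$. This is the computation of Morse homology on a manifold with boundary recalled around \eqref{eqn:hom1}, with $B$ in the role of $M$ and $B_-$ in the role of $\partial M_-$. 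The only point requiring care is the corner stratum $B_\pm$: there one first smooths the corners of $B$, replacing $B$ by a nearby smooth compact isolating neighbourhood $\tilde B$ of $S$, with smooth exit set $\tilde B_-$ for $\psi_{(f,g)}$ and a homotopy equivalence of pairs $(\tilde B,\tilde B_-)\simeq(B,B_-)$; the external-tangency property of $\phi$, and hence of $\psi_{(f,g)}$ by the first step, along $D_\pm$ is exactly what makes such a smoothing possible. Since $\crit(f)\cap B$ and all bounded $\psi_{(f,g)}$-trajectories lie in $\Int(\tilde B)\cap\Int(B)$ one has $\HM_*(f,g,\tilde B,\co)=\HM_*(f,g,B,\co)$, so altogether $\HM_*(\cQ)\cong H_*(\tilde B,\tilde B_-;\mZ)\cong H_*(B,B_-;\mZ)$.

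The main obstacle is the treatment of the corner stratum $B_\pm$ in the last step, i.e.\ the piecewise-smooth refinement of \eqref{eqn:hom1} alluded to in the introduction: one must smooth the corners of $B$ compatibly with the gradient flow, and it is here that the transversality of $\phi$ to $D_\pm$ and the external-tangency hypothesis of Definition~\ref{defn:block} are genuinely used. By comparison, the adapted-metric construction of the second step is elementary, and once the corners have been smoothed the identification of the Morse differential with the cellular differential of $(B,B_-)$ is routine bookkeeping.
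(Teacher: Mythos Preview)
Your argument is correct and, in its first half, essentially identical to the paper's: Lemma~\ref{lem:choice-g} is precisely your adapted-metric step (the paper uses the splitting $T_xM=\mR X\oplus E^\perp$ and a rescaling to force the correct signs of $(-\nabla_g\f)h_\pm$, while you use $T_xM=\mR X\oplus\ker d\f$ to get the sharper $-\nabla_g\f=X$ on $\partial B$), and the subsequent $C^2$-small Morse--Smale perturbation preserving the boundary behaviour is the same. The genuine difference is in how $\HM_*(\cQ)\cong H_*(B,B_-)$ is established. The paper does not smooth the corners; instead it builds, inside $B$, a filtration $B_-\subset B_0\subset\cdots\subset B_m=B$ by isolating blocks for the index-stratified invariant sets $S^{k,k}$ of $\psi_{(f,g)}$, identifies $H_k(B_k,B_{k-1})\cong C_k(f,B)$, shows via the long exact sequence of a triple that the connecting homomorphism equals $\partial_k(\cQ)$, and then a short diagram chase with the triples $B_-\subset B_{k-1}\subset B_k$ yields $H_k(B,B_-)\cong\ker\partial_k/\im\partial_{k+1}$. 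Your route---smooth the corners to a genuine manifold-with-boundary $\tilde B$, invoke the handle-attachment/CW picture, and cite the external-tangency extension of \eqref{eqn:hom1}---is conceptually cleaner, but it outsources exactly the point the paper works out: the filtration argument \emph{is} the proof of that external-tangency extension in this setting, carried out without leaving the piecewise-smooth block. So your proof is shorter because it appeals to a result the paper chose to reprove, and the paper's proof is self-contained at the cost of the index-filtration bookkeeping.
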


Note that in the case that $\phi$ is the gradient flow of a Morse function, then Theorem \ref{thm:iso-block} recovers the results of Morse homology. Theorem \ref{thm:iso-block} also justifies the terminology Morse-Conley-Floer homology, since the construction uses Morse/Floer homology and
recovers the classical homological Conley index.
The following lemma states that one can choose a metric $g$ such for any Lyapunov function $\f$ the boundary behavior of $-\nabla_g \f$ coincides with $X$. 

Recall Definition\ \ref{defn:block} of an isolating block $B$. Because $B$ is a manifold with piecewise smooth boundary, we need to be careful when we speak of the boundary behavior of $B$. We say $h_+:M\rightarrow \mR$ defines the boundary $B_+$ if 
$$X h_+|_{B_+}>0,\quad dh_+|_{B_+}\not =0, \quad\text{and}\quad h_+|_{B\setminus B_+}>0.$$
Analogously we say that $h_-:M\rightarrow \mR$ defines the boundary $B_-$ if 
$$X h_-|_{B_-}<0,\quad dh_-|_{B_-}\not =0, \quad\text{and}\quad h_-|_{B\setminus B_-}>0.$$
\begin{lemma}
\label{lem:choice-g}
Let $\f:M\to \mR$ be a smooth Lyapunov function for $(S,\phi)$ with the Lyapunov property with respect to an isolating block $B$ (cf.~Proposition \ref{prop:lyap1}). Then there exists  a metric $g$ on $M$ which satisfies the property:
$(-\nabla_g \f) h_- < 0$ for all $x\in B_-$ and  $(-\nabla_g \f) h_+> 0$  for all $x\in B_+$.
\end{lemma}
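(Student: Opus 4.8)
The plan is to build the metric $g$ separately near each piece of the boundary and then patch with a partition of unity. First I would observe what has to be arranged. Fix any boundary defining functions $h_\pm$ for $B_\pm$ as in the statement. Since $\f$ is a Lyapunov function for $(S,\phi)$ with Lyapunov property with respect to $B$, and $B$ is an isolating block, $B$ contains no critical points of $\f$ on $\partial B$: indeed by Lemma~\ref{lem:crit1} we have $\crit(\f)\cap B\subset S\subset\Int(B)$, so $d\f(x)\neq 0$ for every $x\in\partial B$. Hence near the boundary $-\nabla_g\f$ never vanishes, and the conditions $(-\nabla_g\f)h_-<0$ on $B_-$, $(-\nabla_g\f)h_+>0$ on $B_+$ are \emph{open} conditions on the $1$-jet of $g$ along the (compact) boundary pieces. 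So it suffices to produce, for each point $x$ of $B_+\setminus B_\pm$, $B_-\setminus B_\pm$, and $B_\pm$, a local inner product on $T_xM$ making the relevant sign condition hold, then glue.

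Second, the local construction. On $B_-\setminus B_\pm$ the defining condition $X h_-<0$ says the vector field $X$ points in the direction of decreasing $h_-$; what we need is that $-\nabla_g\f$ also points that way, i.e.\ $\langle \nabla_g\f, \nabla_g h_-\rangle_g > 0$ after replacing $h_-$-derivative by the gradient pairing, equivalently $g(\nabla_g\f,\nabla_g h_-)>0$, equivalently $d\f(\nabla_g h_-)>0$. Since $d\f(x)\neq 0$, in suitable local coordinates I can simply choose the metric at $x$ so that $\nabla_g\f(x)$ is a positive multiple of the covector-to-vector dual of $dh_-(x)$, which is possible because $dh_-(x)\neq 0$ and $d\f(x)\neq 0$; concretely, pick coordinates with $h_-=$ first coordinate and $\f$ having nonzero first partial, then a diagonal metric with a large enough weight on the first coordinate forces $-\nabla_g\f$ to have the correct first component sign. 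The same argument, with the opposite sign, works on $B_+\setminus B_\pm$. On the corner set $B_\pm=B_+\cap B_-$ one needs \emph{both} inequalities simultaneously; here use that $D_-\pitchfork D_+$ (Definition~\ref{defn:block}(ii)), so $dh_-$ and $dh_+$ are linearly independent along $B_\pm$, and since $d\f\neq 0$ one can choose the metric at such a point to make $-\nabla_g\f$ lie in the open cone $\{v: dh_+(v)>0,\ dh_-(v)<0\}$, which is nonempty precisely because the two covectors are independent. (If at some corner point $d\f$ happened to be proportional to $dh_+$ or $dh_-$, one still has the freedom in the metric to steer $-\nabla_g\f$ into the open cone; the transversality of $D_\pm$ is what guarantees the cone is a genuine open set in $T_xM$.)

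Third, the globalization. The conditions are convex in $g$ pointwise --- if two inner products both put $-\nabla\f$ on the correct side of the relevant hyperplane(s), so does any convex combination, because the sign conditions are linear functionals of the (dual) metric applied to a fixed nonzero covector --- and they are open. So cover a neighborhood of $\partial B$ by finitely many charts on each of which a constant-coefficient model metric works, take a subordinate partition of unity $\{\rho_i\}$, and set $g=\sum_i\rho_i g_i$ on that neighborhood; convexity and openness give the required inequalities on all of $B_+\cup B_-$. Finally extend $g$ to a global Riemannian metric on $M$ arbitrarily (e.g.\ patch with any background metric away from $\partial B$ using another partition of unity). This produces the desired $g$.

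The main obstacle is the corner set $B_\pm$: away from the corners the construction is a one-line perturbation of the metric along a hypersurface, but at the corners one must satisfy two sign conditions at once, and it is essential to invoke the transversality $D_-\pitchfork D_+$ from Definition~\ref{defn:block}(ii) to know the target cone in $T_xM$ is open (hence nonempty and stable under small perturbations), so that the patching argument still applies there. A minor technical point to check carefully is that the sign conditions, when rephrased intrinsically, really are independent of the choices of $h_\pm$ --- this is already asserted in the footnote to Property (v) of Section~\ref{sec:intro} and follows because $X h_\pm$ and $(-\nabla_g\f)h_\pm$ change by the same positive factor under a change of defining function.
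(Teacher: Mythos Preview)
Your overall strategy---produce the metric locally and patch---is reasonable, but there is a genuine gap in the globalization step. You claim the sign conditions are convex in $g$, justifying this by saying they are ``linear functionals of the (dual) metric''. The second clause is correct: the condition $(-\nabla_g\f)h_- < 0$ is equivalent to $g^{-1}(d\f, dh_-) > 0$, which is linear in the cometric $g^{-1}$. But the first clause does not follow, because $g \mapsto g^{-1}$ is nonlinear: a convex combination $g = \sum_i \rho_i g_i$ of metrics does \emph{not} yield a convex combination of cometrics, and the set $\{g : g^{-1}(\alpha,\beta) > 0\}$ is in general not convex. (In dimension~$3$ with $\alpha=e_1^*$, $\beta=e_2^*$, take $g_0,g_1$ positive definite, equal on the diagonal, with zero $(1,2)$-entry, but opposite signs in the $(1,3)$- and $(2,3)$-entries; each has $(g_i^{-1})_{12}>0$, yet their average is diagonal with $(g^{-1})_{12}=0$.) The fix is easy: patch cometrics instead, i.e.\ define $g$ by $g^{-1} := \sum_i \rho_i\, g_i^{-1}$; this is again positive definite and the sign conditions, being genuinely linear in $g^{-1}$, survive.

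A smaller point: your local construction on $B_-\setminus B_\pm$ (``choose the metric so that $\nabla_g\f$ is a positive multiple of the dual of $dh_-$'') would force $d\f$ and $dh_-$ to be proportional as covectors, which is generically false. What you actually need---and what makes the local step work---is that $d\f$ and $dh_-$ are never \emph{negatively} proportional on $B_-$; this follows since $X\f<0$ and $Xh_-<0$ there. At the corners the relevant open cone is nonempty for the same reason: the vector $-X$ itself lies in $\{d\f>0\}\cap\{dh_->0\}\cap\{dh_+<0\}$.

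This last observation is essentially the whole content of the paper's proof, which is more direct and avoids patching (and hence the convexity issue) altogether. On a neighbourhood $U\supset\partial B$ where $X\ne 0$ and $X\f<0$, split $TM|_U = E\oplus E^\perp$ with $E=\mathrm{span}(X)$, normalize $e_E(X,X)=1$, and take $g = (-X\f)\,e_E + \tfrac{1}{\epsilon}\,e_{E^\perp}$. A short computation gives $-\nabla_g\f = X + \epsilon Y$ for a bounded $Y\in E^\perp$, so $(-\nabla_g\f)h_\pm = Xh_\pm + \epsilon\,Yh_\pm$; for small $\epsilon$ this inherits the sign of $Xh_\pm$ uniformly on the compact set $\partial B$, corners included. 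Then extend $g$ to $M$ by a partition of unity away from $\partial B$.
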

\begin{proof}
Since $X\not=0$ on $\partial B$, there exists an open $U$ containing $\partial B$, such that $X\not=0$, and $Xf_\phi<0$ on $U$. The span of $X$ defines an one dimensional vector subbundle $E$ of the tangent bundle $TM|_U$ over $U$. There is a complementary subbundle $E^\perp\subset TM|_U$ such that $TM|_U\cong E\oplus E^\perp$. Define the inner product $e_E$ on $E$ by $e_E(X,X)=1$, and let $e_{E^\perp}$ be any inner product on $E^\perp$. Declaring that $E$ and $E^\perp$ are orthogonal defines a metric $e$ on $TM|_U$.  Clearly $e(-\nabla_ef_\phi,X)=-Xf_\phi$, and therefore
$$
-\nabla_e f_\phi=(-Xf_\phi)X+Y,\qquad \text{with}\qquad Y\in E^\perp.
$$
Since $-Xf_\phi>0$ on $U$, we can rescale the metric $e$ to $g=(-Xf_\phi) e_E+\frac{1}{\epsilon} e_{E^\perp}$, for some $\epsilon>0$. It follows that $-\nabla_g f_\phi=X+\epsilon Y$. For the boundary defining functions $h_\pm$ we find
$$
(-\nabla_g f_\phi) h_\pm=Xh_\pm+\epsilon Yh_\pm.
$$
Because $\partial B$ is compact there exists a uniform bound $|Y(h_\pm)(x)|\leq C$ for all $x\in \partial B$ independent of $\epsilon$. For $\epsilon>0$ sufficiently small the sign of $(-\nabla_g f_\phi) h_\pm$ agrees with the sign of $Xh_\pm$ on $\partial B_\pm$. Via a standard partition of unity argument we extend the metric $g$ to $M$, without altering it on $\partial B$, which gives a metric with the desired properties.
\end{proof}

\begin{proof}[Proof of Theorem \ref{thm:iso-block}]
By Lemma \ref{lem:choice-g} we can choose a Riemannian metric $g$ such that $-\nabla_g \f$ has the same boundary behavior as  the vector field $X$.
Using Proposition \ref{prop:morse} we have a small $C^2$-perturbation $f$ of $\f$ such that $(f,g) \in \IMS(\f;B)$, via a constant homotopy in $g$.
Since $f$ is sufficiently close to $\f$ the boundary behavior of $-\nabla_g f$ does not change!
From the definition of the Morse-Conley-Floer homology we have that, for $\cQ = (f,g,B,\co)$,
$$
\HI_*(S,\phi) \cong \HM_*(\cQ).
$$
It remains to compute the Morse homology of the Morse-Conley-Floer quadruple $\cQ$.
Relating the Morse homology to the topological pair $(B,B_-)$ is the same  as in the case when $B=M$, which is described in \cite{Banyaga:2004wk} and \cite{Salamon1}.
The arguments can be followed verbatim and therefore we only provide a sketch of the proof.

The first part of the of the proof starts with the boundary operator $\partial_*(\cQ)$. The latter can be related to the boundary operator
in relative singular homology.
Since the negative $g$-gradient flow $\psi_{(f,g)}$ is Morse-Smale all critical points $x\in \crit(f)\cap B$ are isolated invariant sets for $\psi_{(f,g)}$. Let $B^x$ be an isolating block for $S= \{x\}$. From standard Morse theory and Wazeski's principle it follows that
$$
H_k(B^x,B^x_-;\mZ) \cong C_k(f,B^x) \cong \mZ,\quad \hbox{for~} k=\ind(x),
$$
and $H_k(B^x,B^x_-;\mZ) =0$ for $k\not = \ind(x)$.
For critical points $y,x\in \crit(f)\cap B$, with $\ind(x) = \ind(y)+1 =k$ we define the set
$$
S(x,y) = W_B(x,y) \cup \{x,y\},
$$
which is an isolated invariant set with isolating neighborhood $N$. Let $c$ be such that $f(y)<c<f(x)$. For $T>0$ sufficiently large, and $\epsilon>0$ sufficiently small, define the isolating block
\begin{align*}
B^x=\bigr\{z\in N\,|\,\psi_{(f,g)}(-t,z)\in N,\quad &f(\psi_{(f,g)}(-t,z))\leq f(x)+\epsilon,\\
&\forall\, 0\leq t\leq T,\quad f(z)\geq c\bigl\},
\end{align*}
for $\{x\}$, and the isolating block
\begin{align*}
B^y=\bigr\{z\in N\,|\,\psi_{(f,g)}(t,z)\in N,\quad &f(\psi_{(f,g)}(t,z))\geq f(y)-\epsilon,\\ &\forall\, 0\leq t\leq T,\quad f(z)\leq c\bigl\},
\end{align*}
for $\{y\}$. The exit sets are
\begin{align*}
B_-^x&=\{z\in B^x\,|\,f(z)=c\},\\
B_-^y&=\{z\in B^y\,|\,f(\psi_{(f,g)}(T,z))=f(y)-\epsilon\}.
\end{align*}
Define the sets $B_2^{x,y}=B^x\cup B^y$, $B_1^{x,y}=B^y\cup B_-^x$, and $B_0^{x,y}= B_-^y\cup \cl(B_-^x\setminus B^x)$. The set $B_2^{x,y}$ is an isolating block for $S(x,y)$, the set $\cl(B^{x,y}_2\setminus B^{x,y}_1)$ is an isolating block for $\{x\}$, and the set $\cl(B^{x,y}_1\setminus B^{x,y}_0)$ is an isolating block for $\{y\}$. Via the triple $B^{x,y}_0 \subset B^{x,y}_1 \subset B^{x,y}_2$
we define the operator $\Delta_k: H_k(B^{x,y}_2,B^{x,y}_1) \to H_{k-1}(B^{x,y}_1,B^{x,y}_0)$ by the commutative diagram
$$
\xymatrix{
H_*(B^y,B^y_-)\ar[d]^{\cong}\ar[r]^{\Delta_k}&H_*(B^x,B^x_-)\ar[d]^{\cong}\\
H_*(B^{x,y}_2,B^{x,y}_1)\ar[r]^{\delta_k}&H_*(B^{x,y}_1,B^{x,y}_0).
}
$$
The vertical maps express the homotopy invariance of the Conley index, and the horizontal map is the connecting homomorphism in the long exact sequence of the triple. The homomorphism $\Delta_*$ can be defined on $C_*(f,B)$ directly and 
the analysis in \cite{Banyaga:2004wk} and \cite{Salamon1} shows that $\Delta_* = \partial_*(\cQ)$, which that yields that orbit counting can be expressed in terms of  algebraic topology.

The next step to the apply this to the isolated invariant set $S_{(f,g)}$.
Following the proofs in  \cite{Banyaga:2004wk} and \cite{Salamon1} we construct a special Morse decomposition 
of $S_{(f,g)}$.
Let 
$$
S^{k,\ell} = \bigcup\bigl\{ W_B(y,x)~|~k\le \ind(x)\le \ind(y)\le \ell\bigr\},
$$ 
and since $\psi_{(f,g)}$ is Morse-Smale, these sets are compact isolated invariant sets contained in $S_{(f,g)}$, with
  $S^{k,\ell} = \varnothing$ for $k<\ell$. The sets $\{S^{k,k}\}$ form a Morse decomposition 
  of $S_{(f,g)}$ via
$S^{k,k} \le S^{\ell,\ell}$ if and only if $k \le \ell$.
This yields a filtration of blocks $B_k$
$$
B_- \subset B_0 \subset \cdots \subset B_{m-1} \subset B_m = B,
$$
such that $\Inv\bigl(B_\ell\setminus B_{k-1}\bigr) = S^{\ell,k}$. 
We now use a
modification of  arguments in \cite{Banyaga:2004wk} and \cite{Salamon1}.

As before
$H_k(B_k,B_{k-1}) \cong C_k(f,B)$
and $H_\ell(B_k,B_{k-1}) =0$ for $\ell\not = k$.
Consider the triple $B_- \subset B_{k-1} \subset B_k$ and the associated homology long exact sequence
$$
\to H_{\ell+1}(B_k,B_{k-1}) \to H_\ell(B_{k-1},B_-) \to H_\ell(B_k,B_-) \to H_\ell(B_k,B_{k-1}) \to.
$$
For $\ell \not = k-1,k$ the sequence reduces to
$$
0 \to H_\ell(B_{k-1},B_-) \to H_\ell(B_k,B_-) \to 0,
$$
which shows that $H_\ell(B_{k-1},B_-) \cong H_\ell(B_k,B_-)$ for $\ell \not = k-1,k$.
By a left and right induction argument we obtain that $ H_\ell(B_k,B_-) =0$ for all $\ell>k$ and
$ H_\ell(B_k,B_-) \cong  H_\ell(B,B_-)$ for all $\ell <k$.
If we also use the homology long exact sequences
for the triples $B_-\subset B_{k-2} \subset B_{k-1}$ and $B_{k-2} \subset B_{k-1} \subset B_k$ we obtain the commuting diagram
$$
\xymatrix{
&~&~&0\ar[d]\\
0 \ar[r] &H_k(B_k,B_-) \ar[r] &H_k(B_k,B_{k-1}) \ar[r]^{\delta_k} \ar[rd]^{\delta_k'} &H_{k-1}(B_k,B_-) \ar[d]\\
&~&~&H_{k-1}(B_{k-1},B_{k-2}).
}
$$
The following information can be deduced from the diagram.
The vertical exact sequence we derive that the map $H_{k-1}(B_k,B_-) \to H_{k-1}(B_{k-1},B_{k-2})$ is injective and thus from the commuting triangle we conclude that 
$\ker \delta_k = \ker \delta_k' \cong \ker \partial_k$. From the horizontal exact sequence
we obtain that $H_k(B_k,B_-)\cong \ker \delta_k \cong \ker \partial_k$.
 From  these isomorphisms we obtain the following commuting
diagram of short exact sequences:

$$
\xymatrix{
H_{k+1}(B_{k+1},B_{k})\ar[d]^{\cong}\ar[r]^{\delta_{k+1}}&H_{k}(B_{k},B_-)\ar[d]^{\cong}\ar[r]^{\cong}&H_{k}(B_{k+1},B_-)\ar[d]^{\cong}\ar[r]^{}&0\\
C_{k+1}(f,B)\ar[r]^{\partial_{k+1}}&\ker \partial_k\ar[r]^{\cong}&H_{k}(B,B_-)\ar[r]^{}&0,
}
$$
where we use long exact sequence of the triple $B_-\subset B_k\subset B_{k+1}$ and the fact that $H_k(B_{k+1},B_k) =0$.
The diagram implies that $\HI_k(S,\phi) \cong \ker \partial_k / \im \partial_{k+1} \cong H_k(B,B_-)$, which completes the proof.
\end{proof}

\begin{remark}
\label{rmk:MH}
{\em
Theorem \ref{thm:iso-block} relates Morse-Conley-Floer homology to the singular homology of a pair $(B,B_-)$.
The proof uses the fact that Morse-Conley-Floer homology is well-defined as the Morse homology of a 
Morse-Conley-Floer quadruple. In fact one proves that Morse-Conley-Floer homology is isomorphic to 
Morse homology of manifold pairs as developed in \cite{Schwarz:1993wg}.
To be more precise let $B\subset D$ and $D$ is a manifold with boundary $\partial D = D_+$. Let $E\subset D$ be
a manifold pair such that $B= \cl(D\setminus E)$ and $B_- = B\cap E$.
In order to define $\HM_*(D,E)$ we consider Morse-Smale pairs $(f,g)$ with $-\nabla_g f$ inward pointing on
$\partial D = D_+$ and `pointing into' $E$ on $\partial E$.
Then $\HI_*(S,\phi) \cong \HM_*(D,E) \cong H_*(D,E) \cong H_*(B,B_-)$.
It is worthwhile to develop Morse homologies for blocks $(B,B_-)$.
}
\end{remark}
  
\bibliographystyle{abbrv}
\bibliography{Morse-Conley-Floer}

\begin{thebibliography}{10}

\bibitem{Abbondandolo:2005um}
A.~Abbondandolo and P.~Majer.
\newblock Lectures on the {M}orse complex for infinite-dimensional manifolds.
\newblock In {\em Morse theoretic methods in nonlinear analysis and in
  symplectic topology}, volume 217 of {\em NATO Sci. Ser. II Math. Phys.
  Chem.}, pages 1--74. Springer, Dordrecht, 2006.

\bibitem{AS-1}
A.~Abbondandolo and M.~Schwarz.
\newblock Floer homology of cotangent bundles and the loop product.
\newblock {\em Geom. Topol.}, 14(3):1569--1722, 2010.

\bibitem{Austin:1995tb}
D.~M. Austin and P.~J. Braam.
\newblock {Morse-Bott theory and equivariant cohomology}.
\newblock In {\em The Floer memorial volume}, pages 123--183. Birkh\"auser,
  Basel, 1995.

\bibitem{Banyaga:2004wk}
A.~Banyaga and D.~E. Hurtubise.
\newblock {\em {Lectures on Morse homology}}, volume~29 of {\em Kluwer Texts in
  the Mathematical Sciences}.
\newblock Kluwer Academic Publishers Group, Dordrecht, 2004.

\bibitem{Banyaga:2009ta}
A.~Banyaga and D.~E. Hurtubise.
\newblock {The Morse-Bott inequalities via a dynamical systems approach}.
\newblock {\em Ergodic Theory and Dynamical Systems}, 29(6):1693--1703, 2009.

\bibitem{Conley}
C.~Conley.
\newblock {\em Isolated invariant sets and the {M}orse index}, volume~38 of
  {\em CBMS Regional Conference Series in Mathematics}.
\newblock American Mathematical Society, Providence, R.I., 1978.

\bibitem{Conley-Zehnder-2}
C.~Conley and E.~Zehnder.
\newblock {Morse-type index theory for flows and periodic solutions for
  Hamiltonian equations}.
\newblock {\em Communications on Pure and Applied Mathematics}, 37(2):207--253,
  1984.

\bibitem{Floer:1989wv}
A.~Floer.
\newblock {Symplectic fixed points and holomorphic spheres}.
\newblock {\em Communications in Mathematical Physics}, 120(4):575--611, 1989.

\bibitem{Floer:1989tk}
A.~Floer.
\newblock {Witten's complex and infinite-dimensional Morse theory}.
\newblock {\em Journal of Differential Geometry}, 30(1):207--221, 1989.

\bibitem{Iz-1}
M.~Izydorek.
\newblock {A cohomological Conley index in Hilbert spaces and applications to
  strongly indefinite problems}.
\newblock {\em Journal of Differential Equations}, 170(1):22--50, 2001.

\bibitem{Iz-2}
M.~Izydorek.
\newblock {The LS-index: a survey}.
\newblock In {\em Morse theoretic methods in nonlinear analysis and in
  symplectic topology}, pages 277--320. Springer, Dordrecht, 2006.

\bibitem{Jiang}
M.-Y. Jiang.
\newblock Morse homology and degenerate {M}orse inequalities.
\newblock {\em Topol. Methods Nonlinear Anal.}, 13(1):147--161, 1999.

\bibitem{KMV}
W.~D. Kalies, K.~Mischaikow, and R.~C. A.~M. Vandervorst.
\newblock {Conley Theory}.
\newblock {\em in preparation}.

\bibitem{KMV2}
W.~D. Kalies, K.~Mischaikow, and R.~C. A.~M. Vandervorst.
\newblock {Lattice structures for attractors I}.
\newblock {\em to appear in: Journal of Computational Dynamics}, 2014.

\bibitem{Kronheimer}
P.~B. Kronheimer and T.~S. Mrowka.
\newblock {\em {Monopoles and three-manifolds}}, volume~10 of {\em New
  Mathematical Monographs}.
\newblock Cambridge University Press, Cambridge, Nov. 2007.

\bibitem{McCord}
C.~McCord.
\newblock Mappings and homological properties in the {C}onley index theory.
\newblock {\em Ergodic Theory Dynam. Systems}, 8*(Charles Conley Memorial
  Issue):175--198, 1988.

\bibitem{Mischaikow:1995wl}
K.~Mischaikow.
\newblock {Conley index theory}.
\newblock 1609:119--207, 1995.

\bibitem{MV}
S.~Muna{\`o} and R.~Vandervorst.
\newblock {The Poincar{\'e}-Hopf Theorem for relative braid classes}.
\newblock {\em arXiv.org}, Apr. 2012.

\bibitem{Reineck}
J.~F. Reineck.
\newblock Continuation to gradient flows.
\newblock {\em Duke Math. J.}, 64(2):261--269, 1991.

\bibitem{Robbin:1992wp}
J.~W. Robbin and D.~A. Salamon.
\newblock {Lyapunov maps, simplicial complexes and the Stone functor}.
\newblock {\em Ergod. Th. Dyn. Sys}, 1992.

\bibitem{Salamon1}
D.~A. Salamon.
\newblock {Morse theory, the Conley index and Floer homology}.
\newblock {\em The Bulletin of the London Mathematical Society},
  22(2):113--140, 1990.

\bibitem{Schwarz:1993wg}
M.~Schwarz.
\newblock {Morse homology}.
\newblock 111:x+235, 1993.

\bibitem{Smale:1961vr}
S.~Smale.
\newblock {On gradient dynamical systems}.
\newblock {\em Annals of Mathematics. Second Series}, 74:199--206, 1961.

\bibitem{Weber:2006tx}
J.~Weber.
\newblock {The Morse-Witten complex via dynamical systems}.
\newblock {\em Expositiones Mathematicae}, 24(2):127--159, 2006.

\bibitem{Wilson:1973vx}
J.~Wilson and J.~A. Yorke.
\newblock {Lyapunov functions and isolating blocks}.
\newblock {\em Journal of Differential Equations}, 13:106--123, 1973.

\end{thebibliography}

\end{sloppypar}
\end{document}